\DeclareFontShape{T1}{lmr}{bx}{sc} { <-> ssub * cmr/bx/sc }{}
\pgfplotsset{compat=newest}
\numberwithin{equation}{section}
\setlist[enumerate]{label=(\roman*)}
\theoremstyle{plain}
\newtheorem{theorem}{Theorem}[section]
\newtheorem{lemma}[theorem]{Lemma}
\newtheorem{corollary}[theorem]{Corollary}
\newtheorem{remark}[theorem]{Remark}
\newtheorem{definition}[theorem]{Definition}
\newtheorem{example}[theorem]{Example}
\newcommand {\mat}[1]{\left[\begin{array}{#1}}
\newcommand {\rix}{\end{array}\right]}
\definecolor{myBlue}{RGB}{30,144,255} 
\definecolor{myGreen}{RGB}{69,169,0} 
\definecolor{myRed}{RGB}{165,12,42}
\definecolor{myOrange}{RGB}{225,92,22}
\definecolor{color0}{rgb}{0.121568,0.46666,0.70588}
\definecolor{color1}{rgb}{1,0.4980,0.0549}
\definecolor{color2}{rgb}{0.17254,0.62745,0.17254901}
\definecolor{color3}{rgb}{0.83921,0.15294,0.156862}
\definecolor{color4}{rgb}{0.580392,0.4039215,0.7411764}
\definecolor{color5}{rgb}{0,0,0}
\newcommand{\makeorange}[1]{{\color{myOrange} #1}}
\newcommand{\delete}[1]{ }
\def\N{\mathbb{N}}
\def\R{\mathbb{R}}
\def\C{\mathbb{C}}
\def\SS{\mathbb{S}}
\newcommand{\cO}{\mathcal{O}}
\newcommand{\abso}[1]{\lvert #1 \rvert}
\DeclareMathOperator{\sspan}{span}
\newcommand{\norm}[1]{\lVert#1\rVert}
\newcommand{\coleq}{\coloneqq}
\DeclareMathOperator{\Span}{span}
\newsavebox{\@brx}
\newcommand{\llangle}[1][]{\savebox{\@brx}{\(\m@th{#1\langle}\)}%
  \mathopen{\copy\@brx\kern-0.5\wd\@brx\usebox{\@brx}}}
\newcommand{\rrangle}[1][]{\savebox{\@brx}{\(\m@th{#1\rangle}\)}%
  \mathclose{\copy\@brx\kern-0.5\wd\@brx\usebox{\@brx}}}
\newcommand{\calO}{\ensuremath{\mathcal{O}} }
\newcommand{\mHC}{m_{HC}}
\newcommand{\D}{\mathbb{D}}
\newcommand{\HHH}{\mathbb{H}}
\newcommand{\HH}{\mathcal{H}}
\newcommand{\B}{\mathcal{B}}
\DeclareMathOperator{\im}{im}
\DeclareMathOperator{\Id}{I} 
\title{Connection of hypocoercivity and hypocontractivity via the Cayley transform}
\author{
A.~Arnold\thanks{TU Wien, Institute of Analysis and Scientific Computing, Wiedner Hauptstr. 8-10, A-1040 Wien, Austria, anton.arnold@tuwien.ac.at},
S.~Egger\thanks{TU Wien, Institute of Analysis and Scientific Computing, Wiedner Hauptstr. 8-10, A-1040 Wien, Austria, stefan.egger@tuwien.ac.at} ,
V.~Mehrmann\thanks{Technische Universit\"at Berlin, Institut f.~Mathematik,  MA 4-5, Stra\ss{}e des 17.~Juni 136, D-10623 Berlin, mehrmann@math.tu-berlin.de}, 
and E.A.~Nigsch\thanks{TU Wien, Institute of Analysis and Scientific Computing, Wiedner Hauptstr. 8-10, A-1040 Wien, Austria, eduard.nigsch@tuwien.ac.at} \thanks{corresponding author}
} 
\begin{document}

\maketitle

\begin{abstract} 
The concepts of hypocoercivity and hypocontractivity and their relationship are studied for semi-dissipative continuous-time and discrete-time evolution equations in a Hilbert space setting. New proofs for the characterization of the short-time decay of the solution from the initial value are presented, that in particular characterize the constants in the leading terms of the solution when expanded in time. Maximally coercive/contractive representations of hypocoercive and hypocontractive semi-dissipative systems are presented, as well as the effect of different representations on the error estimates for the numerical solution.
\end{abstract}

\section{Introduction}
We are interested in applying time-discretization methods to linear time-invariant initial value problems,  
\begin{equation}\label{linodecc}
    \dot x =
    -B_cx,\quad x(0)=x^0, \quad t>0,
\end{equation}
where $B_c$ is either a complex matrix or a bounded linear operator on an infinite dimensional, separable Hilbert space $\HH$. We are particularly interested in \emph{(semi-)dissipative systems}, i.e. systems where $B_H$, the Hermitian part of $B_c$, is positive 
(semi-)definite. For simplicity of the presentation we  always assume here that $B_c$ has a trivial kernel. Otherwise, the space should be restricted to $(\ker B_c)^\perp$, assuming that $-B_c$ is semi-dissipative (see also \S{I} of \cite{Vil09}). 

Analogously, we study discrete-time systems (that may or may not arise from a time-discretization of \eqref{linodecc}) which have the form
\begin{equation}\label{linodedd}
    x_{k+1} =B_dx_k,\quad x_0=x^0, \quad k\in\N_0,
\end{equation}
where $B_d$ is a bounded linear operator on $\HH$.
We are particularly interested in systems that are semi-contractive: We call such a system \emph{(semi-)contractive} if $\sigma_{\max}(B_d)\, (\leq) < 1$. Here, $\sigma_{\max}(B_d)$ is the largest singular value (the \emph{spectral norm}) of $B_d$. In analogy to the continuous-time case, we  always assume that $B_d$ does not have the eigenvalue $1$. Otherwise, the space should be restricted to $(\ker (B_d - I))^\perp$, assuming that $B_d$ is semi-contractive. A detailed study of the properties of such systems and their relationship in the finite dimensional case has been presented in \cite{AchAM23ELA}.

In this paper new proofs for the characterization of the short-time decay of the solutions  from
the initial value are presented for both systems. It is shown that essentially the same proof technique can be employed in both systems. The constants in the leading terms of the solution are determined when it is expanded in time in the neighborhood of the initial value. When the discrete-time system arises from the implicit midpoint discretization (scaled Cayley-transform, Crank-Nicholson scheme) of a continuous-time system, it is shown that the norm of the continuous-time solution operator is approximated to a higher order than expected from the order of the discretization. 

Since the representation of a linear system as a semi-dissipative or semi-contractive system is not unique, but can be modified by a change of basis, we discuss the construction of maximally coercive/contractive representations of hypocoercive and hypocontractive systems and the effect of different representations
on the error estimates for the numerical solution.

The paper is organized as follows. In Section~\ref{sec:main} we  present the notation and recall the previously derived decay results for continuous-time systems.
Section~\ref{sec:short-time} presents a new proof of the short-time decay in the continuous-time case. The relationship between the hypocoercivity index of a continuous-time system and the hypocontractivity index 
of the system discretized in time via the implicit midpoint rule, which leads to the scaled Cayley transform of the system operator, is studied in Section~\ref{sec:Cayley}. Section~\ref{sec:t-discrete} studies the concept of hypocontractivity, and the corresponding hypocontractivity index, as finite difference approximation of the corresponding hypocoercivity concept and index. 
Maximally coercive/contractive representations via a change of basis of hypocoercive, respectively hypocontractive systems, as coercive respectively contractive systems are discussed in Section~\ref{sec:optimal} as well as the effect of these transformations on the short-time decay. 


\section{Notation and preliminaries 
}\label{sec:main}
In this section we introduce the notation and recall the previous initial-time decay results.

For  a separable Hilbert space $\HH$, by  
$\mathcal B(\HH)$ we denote the set of bounded linear operators on  $\HH$. For an operator $B \in \mathcal{B}(\HH)$, its spectrum is denoted by $\Lambda(B)$, its spectral radius by $\rho(B)$ and its set of singular values by $\sigma(B)$. Moreover, $B^*$ denotes the adjoint of $B$. The positive definite square root of a self-adjoint positive semidefinite operator $R$ is denoted as $\sqrt{R}$.
For the matrix case $\|\cdot\|$ denotes the spectral norm of vectors and matrices, otherwise the Hilbert space norm. 
We make frequent use of the sets $\SS:=\{x\in\HH\,:\, \|x\|=1\}$, 
$\HHH \coloneqq \{z \in \C: \Re(z) < 0\}$, and $\D \coloneqq \{z \in \C: \lvert z \rvert < 1 \}$.

The concepts of semi-dissipativity and hypocoercivity for operators in $\mathcal B(\HH)$ are defined as follows, see \cite{AchAM25,AchAMN25,Vil09}.
\begin{definition}\label{def:HC+semidiss}
Let $B_c=B_H-B_S 
\in \mathcal B(\HH)$ with $B_H \coloneqq \frac{B_c + B_c^*}{2}$ and 
$B_S \coloneqq -\frac{B_c - B_c^*}{2}$. 

\begin{enumerate}
\item $-B_c$ is called \emph{semi-dissipative} (or, equivalently, $B_c$ is called \emph{semi-accretive}) if $B_H \geq 0$, i.e., $B_H$ is positive semidefinite.
\item $B_c$ is called \emph{hypocoercive} if there exist constants $C\ge1$ and $\lambda>0$ such that the solutions to \eqref{linodecc} satisfy
\begin{equation}\label{def:HC}
  \forall x^0\in\HH,\quad \forall t\ge0:\quad 
  \|x(t)\| \le C e^{-\lambda t}\|x^0\|.
\end{equation}

\item Let $-B_c$ be semi-dissipative. The \emph{hypocoercivity index} (HC-index) $m_{HC}$ of $B_c$ is defined as the smallest integer $m\in\N_0$ (if it exists) such that 
\begin{equation}\label{def:mHC}
    \sum_{j=0}^m (B_c^*)^jB_H \,B_c^j \ge \kappa I
\end{equation}
for some $\kappa>0$. 
\end{enumerate}
\end{definition} 

We recall that the minimality of $m$ implies for the case $\dim(\HH)<\infty$ that the matrix $\sum_{j=0}^{m_{HC}-1} (B_c^*)^jB_H \,B_c^j$ has a non-trivial kernel (see (2.2) or (2.18) in \cite{AAC2}). But for $\dim(\HH)=\infty$, the condition \eqref{def:mHC} only implies the following weaker condition (see (14) in \cite{AchAMN25}):
\begin{equation}\label{eps-kernel}
    \forall\varepsilon>0\: \exists \, x^0=x^0(\varepsilon)\in\SS \: \forall n=0,...,m_{HC}-1\,:\, \|\sqrt{B_H} B_c^n x^0\| \le\varepsilon.
\end{equation}

We remark that, since $B_c\in\B(\HH)$, $B_c$ is hypocoercive if and only if $\Lambda(-B_c) \subset \HHH$, see Theorem \ref{ct_exponentially_stable} below.

For the discrete-time system \eqref{linodedd}, we use the following definition, see \cite{AchAM23ELA} for the finite dimensional case.
\begin{definition}
\mbox{}
Let  $B_d \in \B(\HH)$.
\begin{enumerate}
\item $B_d$ is called \emph{semi-contractive} if $\Vert B_d \Vert \leq 1$.
\item $B_d$ is called \emph{hypocontractive} if there exist constants $C\ge1$ and $0<\lambda<1$ such that the solutions to the iteration \eqref{linodedd} satisfy
\begin{equation}\label{discrete-decay}
  \forall x^0\in\HH,\quad \forall k\in\N_0:\quad
  \|x_k\|\le C \lambda^k \|x^0\|.
\end{equation}

\item Let $B_d$ be semi-contractive. Its \emph{hypocontractivity index} (dHC-index) $m_{dHC}$ is defined as the smallest integer $m\in\N_0$ (if it exists) such that 
\begin{equation}\label{def:mHCd}
\sum_{j=0}^m (B_d^*)^j (I -B_d^* B_d) B_d^j
 \geq \kappa I \,
\end{equation}
for some $\kappa>0$.
\end{enumerate}
\end{definition}
We remark that, due to $B_d\in\B(\HH)$, $B_d$ is hypocontractive if and only if $\Lambda(B_d) \subset \D$, see Theorem \ref{dt_exponentially_stable} below.


The following characterization of the short-time behavior of the propagator norm of \eqref{linodecc} in terms of the HC-index was proven in \cite[Th.\ 2.7]{AAC2} for matrices $B_c$ and in \cite[Th.\ 4.1]{AchAMN25} for operators:
\begin{theorem}\label{short-time-char}
Let $-B_c\in \mathcal B(\HH)$ be semi-dissipative. Then $B_c$ is hypocoercive (with hypocoercivity index $m_{HC}\in\N_0$) if and only if
\begin{equation}\label{cont-decay}
  \|e^{-B_ct}\| =1-ct^a + o(t^{a})\quad \mbox{for }t\to0+
\end{equation}
for some $a,\,c>0$. In this case, necessarily $a=2m_{HC}+1$.

For matrices $B_c$, the remainder term in \eqref{cont-decay}can be improved to $\mathcal O(t^{a+1})$.
\end{theorem}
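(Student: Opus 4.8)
The plan is to prove the two directions separately, and in both cases to reduce to a short-time Taylor expansion of $\|e^{-B_ct}x^0\|^2$ in the norm, uniformly in $x^0\in\SS$. The starting point is the identity
\begin{equation*}
  \tfrac{d}{dt}\|e^{-B_ct}x^0\|^2 = -2\,\langle B_H e^{-B_ct}x^0, e^{-B_ct}x^0\rangle,
\end{equation*}
which follows from $\tfrac{d}{dt}e^{-B_ct}x^0 = -B_c e^{-B_ct}x^0$ and the decomposition $B_c=B_H-B_S$ with $B_S$ skew-adjoint. Differentiating repeatedly and using $B_S^* = -B_S$, one finds that the lower-order $t$-derivatives at $t=0$ of $\|e^{-B_ct}x^0\|^2$ are controlled precisely by the quantities $\|\sqrt{B_H}B_c^j x^0\|$ for $j=0,1,\dots$. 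Concretely, if $\|\sqrt{B_H}B_c^j x^0\|=0$ for $j=0,\dots,m-1$ then the first $2m$ derivatives at $0$ vanish and the leading term is governed by $\|\sqrt{B_H}B_c^m x^0\|^2$ times $t^{2m+1}$ (up to a combinatorial constant); this is the algebraic heart of the matter and essentially the computation behind \eqref{def:mHC}.

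For the direction "$B_c$ hypocoercive with HC-index $m_{HC}$ $\Rightarrow$ \eqref{cont-decay} with $a=2m_{HC}+1$": by \eqref{def:mHC} with $m=m_{HC}$ we have $\sum_{j=0}^{m_{HC}}\|\sqrt{B_H}B_c^j x^0\|^2 \ge \kappa$ for all $x^0\in\SS$, so at every unit vector at least one of these norms is bounded below; combined with the derivative computation this yields $\|e^{-B_ct}x^0\|^2 \le 1 - c_1 t^{2m_{HC}+1} + o(t^{2m_{HC}+1})$ for some $c_1>0$, uniformly in $x^0\in\SS$ — one has to be careful that the $o$-term is uniform, using boundedness of $B_c$ to control the remainder via $\|B_c\|$. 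Taking the supremum over $\SS$ and a square root gives the upper bound in \eqref{cont-decay}. For the matching lower bound (so that the expansion is genuinely of order exactly $2m_{HC}+1$, not higher), one uses the minimality of $m_{HC}$: in finite dimensions the kernel of $\sum_{j=0}^{m_{HC}-1}(B_c^*)^jB_H B_c^j$ is non-trivial, so there is a unit vector $x^0$ with $\|\sqrt{B_H}B_c^j x^0\|=0$ for $j<m_{HC}$, and for that vector the expansion starts exactly at $t^{2m_{HC}+1}$; in infinite dimensions one only has \eqref{eps-kernel}, so one takes a sequence $x^0(\varepsilon)$ and passes to the limit $\varepsilon\to0$ in the constant, which still pins down the exponent $a=2m_{HC}+1$ but requires an $\varepsilon$-$\delta$ argument to handle the non-uniformity.

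For the converse "\eqref{cont-decay} $\Rightarrow$ $B_c$ hypocoercive with $a=2m_{HC}+1$": the expansion \eqref{cont-decay} with $c>0$ forces $\|e^{-B_ct}\|<1$ for all small $t>0$, hence (by the semigroup property $\|e^{-B_c nt}\|\le\|e^{-B_ct}\|^n$) exponential decay, i.e.\ hypocoercivity; equivalently $\Lambda(-B_c)\subset\HHH$ by Theorem~\ref{ct_exponentially_stable}, and then $m_{HC}$ is finite. That the exponent $a$ appearing in \eqref{cont-decay} must equal $2m_{HC}+1$ follows because the forward direction already shows the expansion holds with exponent $2m_{HC}+1$, and an asymptotic expansion $1-ct^a+o(t^a)$ determines $a$ uniquely once $c>0$. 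Finally, for the matrix improvement to $\mathcal{O}(t^{a+1})$: in finite dimensions $\|e^{-B_ct}\|$ is, near $t=0$, the square root of the largest eigenvalue of the analytic matrix function $e^{-B_c^*t}e^{-B_ct}$, and by analytic (Rellich) perturbation theory one gets a full asymptotic series in integer powers of $t$ for this eigenvalue; the $t^{2m_{HC}+1}$ term is the first non-constant one and the error after it is a genuine $\mathcal{O}(t^{2m_{HC}+2})$, whereas in infinite dimensions the lack of an eigenvalue (only \eqref{eps-kernel}) blocks this and only $o(t^a)$ survives.

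The main obstacle I expect is the uniformity over $\SS$ in infinite dimensions: controlling the remainder term in the Taylor expansion of $\|e^{-B_ct}x^0\|^2$ uniformly in $x^0\in\SS$ while simultaneously using only the weak spectral-gap condition \eqref{eps-kernel} in place of a genuine kernel vector. This is where the combinatorial derivative computation must be done carefully and the $\varepsilon\to0$ limit argument must be organized so that the exponent — not necessarily the optimal constant — is identified correctly.
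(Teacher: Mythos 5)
There is a genuine gap, and it sits at the heart of the forward direction. Your route to the decay (upper) bound --- ``at every unit vector at least one of $\Vert\sqrt{B_H}B_c^jx^0\Vert$, $0\le j\le m_{HC}$, is bounded below, and combined with the derivative computation this gives $\Vert e^{-B_ct}x^0\Vert^2\le 1-c_1t^{2m_{HC}+1}+o(t^{2m_{HC}+1})$ uniformly on $\SS$'' --- is precisely the step that a direct Taylor expansion does not deliver. The coefficients of $t,\dots,t^{2m_{HC}+1}$ in the expansion of $\Vert e^{-B_ct}x^0\Vert^2$ contain cross terms $\langle\sqrt{B_H}B_c^nx^0,\sqrt{B_H}B_c^kx^0\rangle$ with $n$ or $k$ as large as $2m_{HC}$ and with alternating signs, so knowing that one particular $\Vert\sqrt{B_H}B_c^jx^0\Vert$ is large does not make the truncated expansion negative at the claimed order; and if $\Vert\sqrt{B_H}B_c^jx^0\Vert$ is small but nonzero the constant in front of the corresponding decay term degenerates, so splitting $\SS$ according to which $j$ is large forces one to balance these competing terms --- exactly the case distinctions of the earlier proofs (Lemma A.1 in \cite{AAC2}, \S 4.2 in \cite{AchAMN25}) that the paper's new argument is designed to avoid. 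The paper instead takes a near-maximizing family $y(t)$ for $\Vert e^{-B_ct}\Vert$, proves by induction (Lemma \ref{decay_properties}, using the positive definiteness of the Hilbert-matrix-type form in Lemma \ref{bilinear_form_minimization} and the lower bound of Theorem \ref{operator_norm_lower_bound}) that automatically $\Vert\sqrt{B_H}B_c^py(t)\Vert=o(t^{m-1-p})$, and only then applies the expansion of Lemma \ref{lem_propagator_simplification}. A second problem in the same direction: even granting your upper and lower bounds, they come with different constants, which only controls the $\liminf$ and $\limsup$ of $(1-\Vert e^{-B_ct}\Vert)/t^{2m_{HC}+1}$; the asserted form \eqref{cont-decay} requires the existence of the limit, i.e.\ matching constants on both sides, so ``the exponent, not necessarily the optimal constant'' is not an available simplification. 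This is exactly why the paper optimizes the test function $y(t)=y_0(t)+\sum_p\lambda_p(tB_c)^py_0(t)$ via the Hilbert-matrix minimization (Lemma \ref{min_of_quadratic_form}) in Theorem \ref{operator_norm_lower_bound} and shows in Theorem \ref{operator_norm_upper_bound} that the upper bound carries the same constant \eqref{c-operator}.

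In the converse direction your inference ``exponential decay, hence $m_{HC}$ is finite'' is unjustified in the operator case: for $\dim\HH=\infty$, hypocoercivity (equivalently $\Lambda(-B_c)\subset\HHH$) is not known to imply the coercivity condition \eqref{def:mHC} for any finite $m$, and the parenthetical ``with hypocoercivity index $m_{HC}\in\N_0$'' is part of what has to be proved. The paper extracts the finite index from the decay assumption itself: setting $m=\lceil(a-1)/2\rceil$, if $\sum_{p=0}^{m}(B_c^*)^pB_HB_c^p$ were not coercive, Theorem \ref{operator_norm_lower_bound} applied at level $m+1$ would give $\Vert e^{-B_ct}\Vert^2\ge 1-\tilde c\,t^{2m+3}+o(t^{2m+3})$, contradicting \eqref{cont-decay} since $a\le 2m+1$; hence the HC-index exists, is at most $m$, and the forward direction then pins down $a=2m_{HC}+1$, while hypocoercivity itself follows from $\Vert e^{-B_ct}\Vert<1$ by the semigroup argument you indicate (as in the paper). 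Your matrix-case argument for the $\mathcal O(t^{a+1})$ remainder via analyticity of $t\mapsto\Vert e^{-B_ct}\Vert$ near $t=0$ is in line with the paper's use of \cite{KOHAUPT20011} and is fine.
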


In \S \ref{sec:short-time} we present a much simplified proof of this result. The sharp constant $c$ has already been known from \cite[Th.\ 2.7]{AAC2} for matrices and \cite[Th.\ 4.1]{AchAMN25} for operators. Its formula is included below in Theorem \ref{thm_propagator_norm_asymptotics} (for operators) and in Corollary \ref{thm_propagator_norm_asymptotics_finite_dim} (for matrices).


\section{Continuous-time systems: new proof of short-time decay}\label{sec:short-time}

In this section we present a new, significantly shortened proof of Theorem \ref{short-time-char}. While the upper bound for the propagator norm $\|e^{-B_c t}\|$ was based on case distinctions (see Lemma A.1 in \cite{AAC2} and \S4.2 in \cite{AchAMN25}), this is not needed here anymore. Moreover, for bounded operators on a Hilbert space, the proof was detailed in \cite{AchAMN25} only for the case of hypocoercivity index $m_{HC}=1$, while all cases are included here. Also, the new proof extends nicely to the time-discrete case analyzed in \S\ref{sec:t-discrete}.

We start with an auxiliary lemma concerning a minimization problem which plays a central role in establishing the main results of this section.
\begin{lemma}\label{min_of_quadratic_form}
Let $m \in \N$. Then
\begin{align}\label{eq:central_bilinear_form}
\min_{\lambda_1, \dots, \lambda_m \in \C} &\left[\sum_{n=0}^{m-1} \sum_{k=0}^{m-1} \frac{(-1)^{k+n}}{(k+n+1)!} \binom{k+n}{k} \lambda_{m-n} \overline{\lambda_{m-k}} + 2 \sum_{n=0}^{m-1} \frac{(-1)^{m+n}}{(m+n+1)!} \binom{m+n}{m} \Re ( \lambda_{m-n} ) \right. \\
& \left. + \frac{1}{(2m+1)!} \binom{2m}{m} \right]= \frac{1}{(2m+1)!\binom{2m}{m}}.
\end{align}
\end{lemma}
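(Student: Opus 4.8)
The expression to be minimized is a (real) quadratic form in the complex variables $\lambda_1,\dots,\lambda_m$, say $Q(\lambda) = \lambda^* M \lambda + 2\,\Re(b^* \lambda) + c_0$, where $M$ is the $m\times m$ Hermitian (in fact real symmetric) matrix with entries $M_{n+1,k+1} = \frac{(-1)^{k+n}}{(k+n+1)!}\binom{k+n}{k}$ for $n,k = 0,\dots,m-1$, the vector $b$ has entries $b_{n+1} = \frac{(-1)^{m+n}}{(m+n+1)!}\binom{m+n}{m}$, and $c_0 = \frac{1}{(2m+1)!}\binom{2m}{m}$. The plan is to recognize this as the $L^2(0,1)$ squared norm of a polynomial: set $p(s) = s^m + \sum_{n=0}^{m-1} \lambda_{m-n}\, s^n$ (a monic polynomial of degree $m$ with free lower-order coefficients), and check that $Q(\lambda) = \int_0^1 |p(s)|^2\,\di s$. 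Indeed $\int_0^1 s^{j+k}\di s = \frac{1}{j+k+1}$, and the binomial and sign factors $\frac{(-1)^{j+k}}{(j+k+1)!}\binom{j+k}{j}$ are exactly the moments of the shifted Legendre weight structure — more precisely, one should verify the cleaner identity that, after the substitution $s \mapsto 1-s$ or an appropriate rescaling, $Q$ equals $\int_0^1 q(s)^2\,\di s$ for $q$ monic of degree $m$, OR directly that the stated combinatorial coefficients are the Gram matrix of $\{1,s,\dots,s^m\}$ in a suitably normalized inner product. The first step is therefore to pin down this identification precisely by matching coefficients of the quadratic, linear, and constant parts.

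Once $Q(\lambda) = \int_0^1 |p(s)|^2\,\di s$ with $p$ ranging over monic degree-$m$ polynomials is established, the minimization is the classical problem solved by orthogonal polynomials: the minimum of $\|p\|_{L^2(0,1)}^2$ over monic $p$ of degree $m$ is attained by $p = \ttilde P_m$, the monic shifted Legendre polynomial on $[0,1]$, and the minimal value is the squared $L^2(0,1)$ norm of that polynomial. By standard formulas for shifted Legendre polynomials $\ttilde P_m(s) = P_m(2s-1)$ with leading coefficient $\binom{2m}{m}$ (so the monic version is $\binom{2m}{m}^{-1}\ttilde P_m$), one has $\int_0^1 \ttilde P_m(s)^2\,\di s = \frac{1}{2m+1}$, hence $\int_0^1 \big(\binom{2m}{m}^{-1}\ttilde P_m(s)\big)^2\,\di s = \frac{1}{(2m+1)\binom{2m}{m}^2}$. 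Since the constant term $c_0 = \frac{1}{(2m+1)!}\binom{2m}{m}$ was absorbed into $\|p\|^2$, one checks this numerically matches: indeed the claimed answer $\frac{1}{(2m+1)!\binom{2m}{m}}$ should coincide with $\frac{1}{(2m+1)\binom{2m}{m}^2}$ up to the normalization inherent in how the coefficients were scaled by $\frac{1}{(k+n+1)!}$ rather than $\frac{1}{k+n+1}$ — so actually the relevant weight is $\di s$ on $[0,1]$ but with an extra factorial normalization, meaning the quadratic form is $\int_0^1 |p(s)|^2 w(s)\,\di s$ for a weight absorbing the missing factorials, or equivalently after rescaling $\lambda_j$ by binomial factors. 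Reconciling these normalizations is a bookkeeping step.

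The cleanest route, and the one I would actually carry out, avoids guessing the weight: complete the square algebraically. The minimum of $\lambda^* M\lambda + 2\Re(b^*\lambda) + c_0$ over $\lambda \in \C^m$, for $M$ positive definite, equals $c_0 - b^* M^{-1} b$. So it suffices to show $c_0 - b^* M^{-1} b = \frac{1}{(2m+1)!\binom{2m}{m}}$. Observing that $b$ is (up to sign pattern) the "$n=m$ row extended" of the same family of coefficients defining $M$, and $c_0$ is the "$(m,m)$ entry" of that family, the quantity $c_0 - b^* M^{-1} b$ is precisely the Schur complement of $M$ in the $(m+1)\times(m+1)$ Hankel-type matrix $\widetilde M$ with entries $\widetilde M_{n+1,k+1} = \frac{(-1)^{k+n}}{(k+n+1)!}\binom{k+n}{k}$ for $n,k=0,\dots,m$. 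By the Schur complement / Cramer identity, $c_0 - b^* M^{-1} b = \det \widetilde M / \det M$. Thus the lemma reduces to evaluating two determinants of this explicit structured matrix and taking their ratio. These determinants are (generalized) Hilbert-type determinants; their closed forms are known (e.g. via the Cauchy determinant formula after recognizing $\widetilde M$ as a Cauchy matrix times diagonal scalings, or via an LU/orthogonalization argument).

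\textbf{Main obstacle.} The crux is the determinant evaluation: showing $\det\widetilde M_{(m+1)\times(m+1)} = \prod_{j=0}^{m}\frac{(j!)^2}{(2j)!(2j+1)!}$ (or whatever the precise product is) and similarly for $\det M$, so that the ratio collapses to $\frac{1}{(2m+1)!\binom{2m}{m}}$. I expect this to follow from the Cauchy determinant identity once the matrix is factored as $D_1 C D_2$ with $C$ a Cauchy matrix $C_{jk} = \frac{1}{x_j + y_k}$ and $D_1, D_2$ diagonal collecting the $(-1)^{j+k}\binom{j+k}{j}(j!k!/(j+k)!)$-type factors — but getting the factorization and the telescoping of the resulting product exactly right, and matching it to the compact right-hand side, is the delicate part. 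An alternative that sidesteps determinants entirely is to exhibit the optimal $\lambda$ explicitly (the coefficients of the monic shifted Legendre polynomial, suitably rescaled) and verify $Q$ at that point plus nonnegativity of the Hessian; this trades the determinant computation for a Legendre-polynomial norm computation, which may in fact be shorter. I would try the Schur-complement/Cauchy-determinant approach first and fall back to the explicit-minimizer approach if the product does not telescope cleanly.
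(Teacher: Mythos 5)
Your reduction is sound as far as it goes, and it is in essence the same mathematics as the paper's proof: with $\widetilde M$ the $(m+1)\times(m+1)$ matrix with entries $\widetilde M_{n,k}=\frac{(-1)^{n+k}}{(n+k+1)!}\binom{n+k}{k}$, $n,k=0,\dots,m$, the expression to be minimized is $w^*\widetilde M w$ with the last component of $w$ pinned to $1$, and (granted $M\succ0$) its minimum is the Schur complement $c_0-b^*M^{-1}b=\det\widetilde M/\det M$. The paper does the equivalent thing by rescaling $\lambda_p\mapsto(-1)^{m-p}(m-p)!\,\lambda_p$, which turns $\widetilde M$ into the Hilbert matrix $H_{m+1}$, and then reads off the constrained minimum as $\frac{1}{(m!)^2\,(H_{m+1}^{-1})_{m+1,m+1}}$ from the known explicit inverse of the Hilbert matrix; since $(H_{m+1}^{-1})_{m+1,m+1}=\det H_m/\det H_{m+1}$, your determinant ratio and the paper's corner entry of the inverse are the same quantity.

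The genuine gap is that the decisive evaluation is never carried out. You reduce the lemma to computing $\det\widetilde M/\det M$, then defer this to unspecified Cauchy/Hilbert determinant identities without committing to the closed form (``or whatever the precise product is''), and your alternative Legendre route is likewise only sketched, with the normalization mismatch between $\frac{1}{(k+n+1)!}$ and $\frac{1}{k+n+1}$ left as acknowledged ``bookkeeping''. Without this step the value $\frac{1}{(2m+1)!\binom{2m}{m}}$ is simply not established, and that evaluation is the entire content of the lemma. The missing piece is short: from $\frac{(-1)^{n+k}}{(n+k+1)!}\binom{n+k}{k}=\frac{(-1)^{n+k}}{(n+k+1)\,n!\,k!}$ one gets $\widetilde M=D^{-1}H_{m+1}D^{-1}$ with $D=\diag\big((-1)^n n!\big)_{n=0}^{m}$, hence $\det\widetilde M/\det M=\frac{1}{(m!)^2}\,\frac{\det H_{m+1}}{\det H_m}=\frac{1}{(m!)^2\,(2m+1)\binom{2m}{m}^2}=\frac{1}{(2m+1)!\binom{2m}{m}}$, using the classical formula for the corner entry of $H_{m+1}^{-1}$ (which the paper cites). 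The same diagonal congruence also supplies the positive definiteness of $M$ and $\widetilde M$ that your complete-the-square step quietly assumes, and it justifies that the infimum is attained, i.e., that ``min'' rather than ``inf'' is legitimate in the statement.
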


\begin{proof}
We first note that the expression we want to minimize is real-valued, so it makes sense to consider the infimum. Moreover, the minimization problem can be rewritten as a constrained minimization of a quadratic form given by
\begin{align}
&\inf_{\lambda_1, \dots, \lambda_m \in \C} &&\left[\sum_{n=0}^{m-1} \sum_{k=0}^{m-1} \frac{(-1)^{k+n}}{(k+n+1)!} \binom{k+n}{k} \lambda_{m-n} \overline{\lambda_{m-k}} + 2 \sum_{n=0}^{m-1} \frac{(-1)^{m+n}}{(m+n+1)!} \binom{m+n}{m} \Re ( \lambda_{m-n} )\right] \nonumber \\
& &&+ \frac{1}{(2m+1)!} \binom{2m}{m} \nonumber \\
&= \inf_{\substack{\lambda_0, \dots, \lambda_m \in \C \\ \lambda_0 = 1}} \left[\sum_{n=0}^{m} \sum_{k=0}^{m} \frac{(-1)^{k+n}}{(k+n+1)!} \binom{k+n}{k} \lambda_{m-n} \overline{\lambda_{m-k}}\right]. \span \span \label{eq:constrained_minimization}
\end{align}
Using the rescaling $\lambda_p \mapsto (-1)^{m-p}(m-p)! \,\lambda_p$ and then relabeling $\lambda_p \mapsto \lambda_{m-p+1}$ yields
\begin{align}
\eqref{eq:constrained_minimization} &= \inf_{\substack{\lambda_0, \dots, \lambda_m \in \C \\ \lambda_0 = (-1)^m/ m!}} \left[\sum_{n=0}^{m} \sum_{k=0}^{m} \frac{1}{k + n + 1} \lambda_{m-n}\overline{\lambda_{m-k}}\right] = \inf_{\substack{\lambda_1, \dots, \lambda_{m} \in \C \\ \lambda_{m+1} = (-1)^m/ m!}} \left[\sum_{n=0}^{m} \sum_{k=0}^{m} \frac{1}{k + n + 1} \lambda_{n+1}\overline{\lambda_{k+1}}\right] \nonumber \\
&= \inf_{\substack{\lambda_1, \dots, \lambda_{m} \in \C \\ \lambda_{m+1} = (-1)^m/m!}} \overline{\lambda}^T H_{m+1} \lambda, \label{eq:constrained_quadratic_form_minimization}
\end{align}
where $H_{m+1}$ is the (m+1)-dimensional Hilbert matrix and $\lambda=[\lambda_1,...,\lambda_{m+1}]^T$. As $H_{m+1}$ is known to be positive definite \cite[III]{hilbert_matrix}, the minimization can be restricted to real valued vectors $\lambda$, since
\begin{align*}
    \overline{\lambda}^T H_{m+1} \lambda &= \left( \Re(\lambda) - i \Im(\lambda) \right) ^T H_{m+1} \left( \Re(\lambda) + i \Im(\lambda) \right) = \Re(\lambda)^T H_{m+1} \Re(\lambda) + \Im(\lambda)^T H_{m+1} \Im(\lambda) \\
    &\geq \Re(\lambda)^T H_{m+1} \Re(\lambda),
\end{align*}
where the real part of the vector is taken componentwise. Moreover, the positive definiteness of $H_{m+1}$ implies that a minimizer $\lambda_{min}$ exists. We proceed using constrained minimization via Lagrange multipliers. From this it follows that the gradient at $\lambda_{min}$ is a multiple of the gradient of the constraint, i.e., $H_{m+1}\lambda_{min}$ is a multiple of $e_{m+1}$ (the $m+1$-st unit vector), which means that $\lambda_{min}$ is a multiple of $H_{m+1}^{-1} e_{m+1}$. Together with the given constraint this readily yields

\begin{align*}
\lambda_{min} = \frac{ H_{m+1}^{-1} e_{m+1} }{ (-1)^m m! \left( H_{m+1}^{-1} \right)_{m+1,m+1} }.
\end{align*}
Inserting this into \eqref{eq:constrained_quadratic_form_minimization} yields
\begin{align*}
\inf_{\substack{\lambda_0, \dots, \lambda_m \in \C \\ \lambda_m = (-1)^m m!}} \overline{\lambda}^T H_{m+1} \lambda &= \overline{\lambda_{min}}^T H_{m+1} \lambda_{min} = \frac{ 1 }{ (m!)^2 \left( H_{m+1}^{-1} \right)_{m+1,m+1} } = \frac{ 1 }{ (m!)^2 (2m+1) \binom{2m}{m}^2 } \\
&= \frac{ 1 }{ (2m+1)! \binom{2m}{m} },
\end{align*}
where we used the explicit formula for $H_{m+1}^{-1}$ \cite[I.]{hilbert_matrix}.
\end{proof}

In the following, we assume that $B_c\in \mathcal B(\HH)$ with Hermitian part  $B_H \geq 0$. 

\begin{lemma}\label{lem_propagator_simplification}
Let $m \in \N_0$ and let $y:\,\R_0^+\to\HH$ satisfy $y(t)\ne0$ on some interval $[0,t_0)$, $\|y(t)\|=\mathcal O(1)$, and $\Vert \sqrt{B_H} B_c^n y(t) \Vert = o(t^{m-1-n})$ as $t\to0+$ for $0 \leq n < m$. Then
\begin{align*}
\left\Vert e^{-B_c t} \frac{y(t)}{\Vert y(t) \Vert} \right\Vert^2 = 1 - \frac{2}{\Vert y(t) \Vert^2}t \sum_{n=0}^{m} \sum_{k=0}^{m} \frac{(-t)^{k+n}}{(k+n+1)!} \binom{k+n}{k} \left\langle \sqrt{B_H} B_c^n y(t), \sqrt{B_H} B_c^k y(t) \right\rangle + o(t^{2m+1})
\end{align*}
for $t\to0+$.
\end{lemma}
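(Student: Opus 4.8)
The plan is to expand $e^{-B_c t}$ in a Taylor series and compute $\|e^{-B_c t} y(t)\|^2$ by using the identity that links $\|e^{-B_c t}v\|^2$ to the Hermitian part $B_H$. The starting point is the fundamental relation
\begin{equation*}
\frac{d}{dt}\|e^{-B_c t}v\|^2 = -2\langle B_H e^{-B_c t}v, e^{-B_c t}v\rangle = -2\|\sqrt{B_H}\,e^{-B_c t}v\|^2,
\end{equation*}
so that $\|e^{-B_c t}v\|^2 = \|v\|^2 - 2\int_0^t \|\sqrt{B_H}\,e^{-B_c s}v\|^2\,ds$. Into this I would substitute $v = y(t)$ (treating $t$ as a fixed parameter in the $y$-argument, and the integration variable as $s$) and expand $e^{-B_c s} = \sum_{n\ge 0}\frac{(-s)^n}{n!}B_c^n$ inside the integrand, giving
\begin{equation*}
\|\sqrt{B_H}\,e^{-B_c s}y(t)\|^2 = \sum_{n,k\ge 0}\frac{(-s)^{n+k}}{n!\,k!}\langle \sqrt{B_H}B_c^n y(t), \sqrt{B_H}B_c^k y(t)\rangle .
\end{equation*}
Integrating term by term over $s\in[0,t]$ produces $\int_0^t s^{n+k}\,ds = \frac{t^{n+k+1}}{n+k+1}$, and using $\frac{1}{n!\,k!\,(n+k+1)} = \frac{1}{(n+k+1)!}\binom{n+k}{k}$ turns the sum into exactly the expression in the statement, except that the double sum runs over all $n,k\ge 0$ rather than $0\le n,k\le m$.

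The second step is to truncate the double sum at $n,k=m$ with error $o(t^{2m+1})$. Here the hypotheses on $y$ enter: $\|\sqrt{B_H}B_c^n y(t)\| = o(t^{m-1-n})$ for $0\le n<m$, while $\|\sqrt{B_H}B_c^n y(t)\| = \mathcal O(1)$ for $n\ge m$ (this follows from $B_c, B_H\in\mathcal B(\HH)$, $\|y(t)\|=\mathcal O(1)$, and the fact that $\sqrt{B_H}$ is bounded). A generic term of the double sum carries a factor $t^{n+k+1}$ times $\langle\sqrt{B_H}B_c^n y(t),\sqrt{B_H}B_c^k y(t)\rangle$, which by Cauchy–Schwarz is bounded by $t^{n+k+1}\|\sqrt{B_H}B_c^n y(t)\|\,\|\sqrt{B_H}B_c^k y(t)\|$. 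One checks that whenever $n>m$ or $k>m$, this product is $o(t^{2m+1})$: if say $n>m$ then $t^{n+k+1} = \mathcal O(t^{2m+1})\cdot t^{n+k-2m}$ with $n+k-2m\ge 1$ when $k\ge m$ (so it is $o(t^{2m+1})$ unless both exponents of decay are saturated — and for $n>m$ the extra power of $t$ already gives the gain), and if $k<m$ the $o(t^{m-1-k})$ factor on the $k$-slot combined with $t^{n+k+1}$ with $n\ge m+1$ gives $t^{n+k+1}\cdot o(t^{m-1-k}) = o(t^{n+m}) = o(t^{2m+1})$ since $n\ge m+1$. The symmetric case $k>m$ is identical. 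So all terms with $\max(n,k)>m$ are absorbed into the remainder, and dividing by $\|y(t)\|^2 = \mathcal O(1)$ keeps it $o(t^{2m+1})$; multiplying by the outer $t$ (as written) is consistent with the claimed form.

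I expect the main obstacle to be the bookkeeping in the truncation step — carefully verifying that \emph{every} term with $n>m$ or $k>m$ is $o(t^{2m+1})$, since the decay assumption on the $\sqrt{B_H}B_c^n y(t)$ is only available for $n<m$ and one must lean on the raw power of $t$ for the large indices, being careful about the borderline terms where exactly one index exceeds $m$ and the other is small. A minor subtlety is justifying term-by-term integration and rearrangement of the double series: since $B_c$ is bounded the series for $e^{-B_c s}$ converges in operator norm uniformly for $s\in[0,t_0)$, so Fubini/dominated convergence applies without difficulty. Note also that the displayed identity is stated with an overall factor $t$ pulled out in front of the double sum (with summands $\frac{(-t)^{k+n}}{(k+n+1)!}$), which matches the $\int_0^t s^{n+k}ds = t^{n+k+1}/(n+k+1)$ computation once one factor of $t$ is extracted; I would simply keep that normalization throughout. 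This lemma will then feed into the proof of Theorem \ref{short-time-char} by choosing $y(t)$ to be (a perturbation of) the $\varepsilon$-kernel vector from \eqref{eps-kernel} and optimizing via Lemma \ref{min_of_quadratic_form}.
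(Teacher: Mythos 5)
Your argument is correct and in substance reproduces the paper's proof: the dissipation identity $\tfrac{d}{dt}\|e^{-B_ct}v\|^2=-2\|\sqrt{B_H}\,e^{-B_ct}v\|^2$, integrated and expanded termwise, yields exactly the Cauchy-product expansion that the paper imports from \cite[(A.3), (A.6)]{AAC2}, with the same coefficient identity $\frac{1}{n!\,k!\,(n+k+1)}=\frac{1}{(n+k+1)!}\binom{n+k}{k}$, and your truncation to $n,k\le m$ uses the same case analysis (one index $>m$ with the other $<m$ handled via the $o(t^{m-1-\cdot})$ hypothesis, both indices $\ge m$ handled by the surplus powers of $t$). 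The only genuine difference is that you derive the expansion rather than cite it, which makes the step self-contained; the price is that you keep an infinite double sum with the $t$-dependent vector $y(t)$, so to absorb the \emph{entire} tail $\max(n,k)>m$ (not merely each term) into $o(t^{2m+1})$ you should invoke the uniform bound $\|\sqrt{B_H}B_c^ny(t)\|\le\|\sqrt{B_H}\|\,\|B_c\|^n\|y(t)\|$ and sum the resulting exponential tails, which is immediate; the paper sidesteps this by first truncating the operator series at order $2m+1$ with an $\mathcal O(t^{2m+2})$ operator-norm remainder and only then manipulating a finite sum. One shared caveat: placing the $o(t^{2m+1})$ remainder outside the factor $\|y(t)\|^{-2}$ (your phrase that dividing by $\|y(t)\|^2=\mathcal O(1)$ ``keeps it small'' is not literally a justification) tacitly uses that $\|y(t)\|$ stays bounded away from $0$; this holds in every application of the lemma ($\|y(t)\|=1+\mathcal O(t)$) and the paper's own proof makes the same tacit assumption, so it is not a gap relative to the paper.
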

\begin{proof}
Using \cite[(A.3), (A.6)]{AAC2} (i.e., the Taylor expansion  of $e^{-B_c t}$ and the Cauchy product), we calculate
\begin{align*}
\left\Vert e^{-B_c t} \frac{y(t)}{\Vert y(t) \Vert} \right\Vert^2 
&= \left\langle e^{-B_c^*t} e^{-B_ct} \frac{y(t)}{\|y(t)\|},\,\frac{y(t)}{\|y(t)\|} \right\rangle \\
&= 1 +  \frac{2}{\|y(t)\|^2} 
\sum_{j=1}^{2m+1} \frac{(-t)^j}{j!} \sum_{k=0}^{j-1}  \binom{j-1}{k} \left\langle (B_c^*)^k {B_H} B_c^{j-1-k} y(t), y(t) \right\rangle + \mathcal O(t^{2m+2}) \\
&= 1 + \frac{2}{\|y(t)\|^2} \sum_{k=0}^{2m} \sum_{j=k+1}^{2m+1} \frac{(-t)^j}{j!} \binom{j-1}{k} \left\langle \sqrt{B_H} B_c^{j-1-k} y(t), \sqrt{B_H} B_c^k y(t) \right\rangle + \mathcal O(t^{2m+2}) \\
&= 1 + \frac{2}{\|y(t)\|^2} \sum_{k=0}^{2m} \sum_{n=0}^{2m-k} \frac{(-t)^{n+1+k}}{(n+1+k)!} \binom{n+k}{k} \left\langle \sqrt{B_H} B_c^n y(t), \sqrt{B_H} B_c^k y(t) \right\rangle + \mathcal O(t^{2m+2})\\
&= 1 + \frac{2}{\|y(t)\|^2} \sum_{k=0}^{m} \sum_{n=0}^{2m-k} \frac{(-t)^{n+1+k}}{(n+1+k)!} \binom{n+k}{k} \left\langle \sqrt{B_H} B_c^n y(t), \sqrt{B_H} B_c^k y(t) \right\rangle + o(t^{2m+1}) ,
\end{align*}
where we have used that $\Vert \sqrt{B_H} B_c^n y(t) \Vert = \mathcal O(1)$ as $t\to0+$, for $n \ge m$, which follows from the assumption $\|y(t)\|=\mathcal O(1)$. Analogously, we reduce the sum further as 
\begin{align*}
\left\Vert e^{-B_ct} \frac{y(t)}{\Vert y(t) \Vert} \right\Vert^2 
&= 1 + \frac{2}{\|y(t)\|^2} \sum_{k=0}^{m} \sum_{n=0}^{m} \frac{(-t)^{n+1+k}}{(n+1+k)!} \binom{n+k}{k} \left\langle \sqrt{B_H} B_c^n y(t), \sqrt{B_H} B_c^k y(t) \right\rangle + o(t^{2m+1}) \\
&= 1 - \frac{2}{\|y(t)\|^2}t \sum_{k=0}^{m} \sum_{n=0}^{m} \frac{(-t)^{n+k}}{(n+1+k)!} \binom{n+k}{k} \left\langle \sqrt{B_H} B_c^n y(t), \sqrt{B_H} B_c^k y(t) \right\rangle + o(t^{2m+1}),
\end{align*}
which is the claimed result.
\end{proof}

If $m$ is the hypocoercivity index of $B_c$, the conditions on $y(t)$ in Lemma \ref{lem_propagator_simplification} are satisfied, e.g.,\ by $y(t):=x^0(t^{m-1})$ using $x^0$ from \eqref{eps-kernel}.  Note further that in Lemma~\ref{lem_propagator_simplification} the function $y$ does not need to be continuous in $t$.

The following theorem yields the (as it turns out) optimal lower bound for the asymptotic expansion of the propagator norm, assuming essentially that the HC-index of $B_c$ is at least $m$.

\begin{theorem}\label{operator_norm_lower_bound}
Let $-B_c\in \mathcal B(\HH)$ be semi-dissipative, and let $B_H$ be the Hermitian part of $B_c$.
Suppose that, for some $m \in \N_0$,  the coercivity condition
\begin{align}\label{eq:coercivity_assumption}
    \sum_{j=0}^{m-1} (B_c^*)^j B_H B_c^j \geq \kappa I
\end{align}
is \emph{violated} for all $\kappa > 0$. Then
\begin{align}\label{eq:lininf}
\Vert e^{-B_c t} \Vert^2 \geq 1 - t^{2m+1} \frac{2}{(2m+1)!\binom{2m}{m}} \lim_{\delta \to 0}\ \inf_{\substack{\Vert \sqrt{B_H} B_c^p y \Vert \leq \delta,\ 0 \leq p < m \\ \Vert y \Vert = 1}} \left\Vert \sqrt{B_H} B_c^m y \right\Vert^2 + o(t^{2m+1})
\end{align}
for $t\to0+$. 
\end{theorem}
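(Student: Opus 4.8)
The plan is to bound $\|e^{-B_c t}\|$ from below by testing $e^{-B_c t}$ against a well-chosen, $t$-dependent unit vector $y(t)$, namely a polynomial $\psi(tB_c)$ applied to a near-kernel vector supplied by the failure of \eqref{eq:coercivity_assumption}, and to match the resulting asymptotics against Lemma~\ref{lem_propagator_simplification} and the minimization in Lemma~\ref{min_of_quadratic_form}. Set $I_\delta := \inf\{\|\sqrt{B_H}B_c^m y\|^2 : \|y\|=1,\ \|\sqrt{B_H}B_c^p y\|\le\delta\ \text{for}\ 0\le p<m\}$. Since \eqref{eq:coercivity_assumption} fails for every $\kappa>0$, the admissible set is nonempty for each $\delta>0$; $I_\delta$ is nondecreasing as $\delta\downarrow0$ and bounded by $\|B_H\|\,\|B_c\|^{2m}$, so $C:=\lim_{\delta\to0}I_\delta$ exists and equals the quantity in \eqref{eq:lininf}. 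Next, let $c_0=1,c_1,\dots,c_m\in\C$ be a minimizer of
\begin{equation*}
\Phi(c_1,\dots,c_m):=\sum_{n,k=0}^{m}\frac{(-1)^{n+k}}{(n+k+1)!}\binom{n+k}{k}\,c_{m-n}\,\overline{c_{m-k}},
\end{equation*}
which exists by the proof of Lemma~\ref{min_of_quadratic_form} and there has value $\frac{1}{(2m+1)!\binom{2m}{m}}$; put $\psi(x):=\sum_{j=0}^{m}c_jx^j$. Finally fix $\delta(t):=t^{m+1}$, choose $z(t)$ with $\|z(t)\|=1$, $\|\sqrt{B_H}B_c^p z(t)\|\le\delta(t)$ for $0\le p<m$ and $\|\sqrt{B_H}B_c^m z(t)\|^2\le I_{\delta(t)}+\delta(t)$ (so $\|\sqrt{B_H}B_c^m z(t)\|^2\to C$), and set $y(t):=\psi(tB_c)z(t)$.

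The core step is the short-time expansion of $\sqrt{B_H}B_c^n y(t)$ for $0\le n\le m$. From $\sqrt{B_H}B_c^n y(t)=\sum_{j=0}^{m}c_jt^j\sqrt{B_H}B_c^{\,n+j}z(t)$ one sees that the terms with $n+j<m$ have norm $\mathcal O(\delta(t))$, those with $n+j>m$ have norm $\mathcal O(t^{m-n+1})$, and the term $j=m-n$ equals $c_{m-n}t^{m-n}\sqrt{B_H}B_c^m z(t)$; since $\delta(t)=t^{m+1}=o(t^{m-n})$,
\begin{equation*}
\sqrt{B_H}B_c^n y(t)=c_{m-n}\,t^{m-n}\,\sqrt{B_H}B_c^m z(t)+o(t^{m-n}),\qquad 0\le n\le m .
\end{equation*}
In particular $\|\sqrt{B_H}B_c^n y(t)\|=\mathcal O(t^{m-n})=o(t^{m-1-n})$ for $0\le n<m$, and $\|y(t)\|=\|z(t)+\mathcal O(t)\|\to1$, so the hypotheses of Lemma~\ref{lem_propagator_simplification} hold. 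Substituting the expansion, $\langle\sqrt{B_H}B_c^n y(t),\sqrt{B_H}B_c^k y(t)\rangle=c_{m-n}\overline{c_{m-k}}\,t^{2m-n-k}\|\sqrt{B_H}B_c^m z(t)\|^2+o(t^{2m-n-k})$; the factor $t^{n+k}$ in each summand promotes it to order $t^{2m}$, and the double sum of Lemma~\ref{lem_propagator_simplification} collapses to $t^{2m}\,\Phi(c_1,\dots,c_m)\,\|\sqrt{B_H}B_c^m z(t)\|^2+o(t^{2m})$. Using $\Phi(c_1,\dots,c_m)=\frac{1}{(2m+1)!\binom{2m}{m}}$, $\|\sqrt{B_H}B_c^m z(t)\|^2\to C$ and $\|y(t)\|\to1$, Lemma~\ref{lem_propagator_simplification} yields
\begin{equation*}
\|e^{-B_c t}\|^2\ \ge\ \Big\|e^{-B_c t}\tfrac{y(t)}{\|y(t)\|}\Big\|^2\ =\ 1-\frac{2C}{(2m+1)!\binom{2m}{m}}\,t^{2m+1}+o(t^{2m+1}),
\end{equation*}
which is \eqref{eq:lininf}.

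I expect the real obstacle to be choosing the test vector correctly: a fixed (in $t$) near-kernel vector fails the decay hypotheses of Lemma~\ref{lem_propagator_simplification}, and even a hands-on error analysis of it would only give the weaker constant $\frac{2\binom{2m}{m}}{(2m+1)!}C$. The operator $\psi(tB_c)$ is the device that "shifts" the kernel defect by one step, so $\sqrt{B_H}B_c^n y(t)$ picks up the nonnegligible leading term $c_{m-n}t^{m-n}\sqrt{B_H}B_c^m z(t)$, and it is exactly the freedom in $c_1,\dots,c_m$ that lets one realize the Hilbert-matrix minimizer of Lemma~\ref{min_of_quadratic_form} and hence the sharp constant. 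The one point requiring care is coupling $\delta(t)\to0$ at a rate $o(t^m)$, so that the $\mathcal O(\delta(t))$ corrections — present only because the kernel is approximate rather than exact — are absorbed into the $o(t^{2m+1})$ remainder; any such rate works, and $\delta(t)=t^{m+1}$ is a convenient choice.
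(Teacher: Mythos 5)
Your proposal is correct and takes essentially the same route as the paper's proof: the same test vector $\psi(tB_c)$ applied to approximate-kernel vectors, with coefficients taken from the Hilbert-matrix minimizer of Lemma~\ref{min_of_quadratic_form}, fed into the expansion of Lemma~\ref{lem_propagator_simplification}. The only differences are organizational --- you use a single threshold $\delta(t)=t^{m+1}$ instead of the paper's $p$-dependent thresholds $t^{m+1-p}$, and your leading-term expansion of $\sqrt{B_H}B_c^{\,n}y(t)$ replaces the paper's case-by-case estimates of the cross terms and its enclosure argument identifying $\lim_{t\to0}\inf_{S_t}$ with the $\delta$-limit.
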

Note that for $m=0$ the assumption \eqref{eq:coercivity_assumption} is void and the infimum in \eqref{eq:lininf} is just taken over $\SS$. For $m>0$ these infima are never taken over the empty set, due to \eqref{eps-kernel}. Moreover, their limit as $\delta\to0$ is finite, since $\sqrt{B_H}B_c^m\in\B(\HH)$. Further note that in view of \eqref{def:mHC}, the assumption \eqref{eq:coercivity_assumption} implies that $m_{HC} \geq m$, if the HC-index of $B_c$ exists. 
\begin{proof}
We consider the sets
\begin{align*}
S_t \coloneqq 
\{ y \in \SS : \Vert \sqrt{B_H} B_c^p y \Vert \leq t^{m+1-p},\ 0 \leq p < m \},
\end{align*}
which are nonempty for every $t > 0$ due to assumption \eqref{eq:coercivity_assumption}. Consequently, for each $t>0$ we can choose an element $y_0(t) \in S_t$ such that
\begin{align}\label{Bmy0-conv}
\left\lvert \left\Vert \sqrt{B_H} B_c^m y_0(t) \right\Vert - \inf_{y \in S_t} \left\Vert \sqrt{B_H} B_c^m y \right\Vert  \right\rvert < t.
\end{align}
Then we have
\begin{align}\label{eq:decay_condition}
\left\Vert \sqrt{B_H} B_c^p y_0(t) \right\Vert = \mathcal{O}(t^{m+1-p})
\end{align}
for $0 \leq p < m$, and more generally we have the (non-sharp) estimate
\begin{align}\label{eq:decay_condition_2}
\left\Vert \sqrt{B_H} B_c^p y_0(t) \right\Vert = \mathcal{O}(t^{m-p})
\end{align}
for any $p \in \N_0$ (because $\Vert \sqrt{B_H} B_c^p y_0(t) \Vert = \mathcal{O}(1)$ for any $p \geq m$). From \eqref{Bmy0-conv} we have
\begin{align}\label{eq:infimum_convergence}
\lim_{t \to 0} \left\Vert \sqrt{B_H} B_c^m y_0(t) \right\Vert = \lim_{t \to 0}\ \inf_{y \in S_t} \left\Vert \sqrt{B_H} B_c^m y \right\Vert = \lim_{\delta \to 0}\ \inf_{\substack{\Vert \sqrt{B_H} B_c^p y \Vert \leq \delta,\, 0 \leq p < m \\ \Vert y \Vert = 1}} \left\Vert \sqrt{B_H} B_c^m y \right\Vert,
\end{align}
where the limit in the middle and on the right-hand side is well-defined as the expression is increasing with respect to $t$ and $\delta$, respectively, and bounded from above by $\|\sqrt{B_H}B_c^m\|$.
The equality of the last two limits follows from the enclosure 
$$
  \inf_{\substack{\Vert \sqrt{B_H} B_c^p y \Vert \leq t^2,\, 0 \leq p < m \\ \Vert y \Vert = 1}} \left\Vert \sqrt{B_H} B_c^m y \right\Vert 
  \le\inf_{y \in S_t} \left\Vert \sqrt{B_H} B_c^m y \right\Vert
  \le \inf_{\substack{\Vert \sqrt{B_H} B_c^p y \Vert \leq t^{m+1},\, 0 \leq p < m \\ \Vert y \Vert = 1}} \left\Vert \sqrt{B_H} B_c^m y \right\Vert 
$$
for $0<t\le1$, and hence choosing $\delta=t^2$ and, respectively, $\delta=t^{m+1}$ in \eqref{eq:infimum_convergence}.

Next, we choose $\lambda_1, \dots, \lambda_m \in \R$ as the unique minimizer of the quadratic form \eqref{eq:central_bilinear_form} and consider 
\begin{equation}\label{y(t)}
y(t) \coloneqq y_0(t) + \sum_{p=1}^m \lambda_p\, (tB_c)^p y_0(t),\,t\ge0.
\end{equation}
Then, 
\begin{align}\label{eq:decay_condition_3}
\left\Vert \sqrt{B_H} B_c^n (tB_c)^p y_0(t) \right\Vert = \mathcal{O}(t^{m+1-n})
\end{align}
for $0 \leq p,n$ with $p+n < m$ due to \eqref{eq:decay_condition} and
\begin{align}\label{eq:decay_condition_4}
\left\Vert \sqrt{B_H} B_c^n (tB_c)^p y_0(t) \right\Vert = \mathcal{O}(t^{m-n})
\end{align}
for any $0 \leq p,n$ due to \eqref{eq:decay_condition_2}. Consequently, $\Vert \sqrt{B_H} B_c^n y(t) \Vert = o(t^{m-1-n})$, $\|y(t)\|=\mathcal O(1)$, and $y(t)\ne0$ on some interval $[0,t_0)$. Thus, Lemma \ref{lem_propagator_simplification} yields
\begin{align}\label{eq:lower_bound_expression}
\left\Vert e^{-B_ct} \frac{y(t)}{\Vert y(t) \Vert} \right\Vert^2 = 1 - \frac{2}{\|y(t)\|^2} t \sum_{n=0}^{m} \sum_{k=0}^{m} \frac{(-t)^{k+n}}{(k+n+1)!} \binom{k+n}{k} \left\langle \sqrt{B_H} B_c^n y(t), \sqrt{B_H} B_c^k y(t) \right\rangle + o(t^{2m+1})
\end{align}
as $t\to0+$.

When expanding $y(t)$ in the inner product of \eqref{eq:lower_bound_expression}, 
i.e.,\ in
\begin{equation}\label{inner-prod}
  \sum_{p=1}^m\sum_{q=1}^m \left\langle \sqrt{B_H}B_c^n(tB_c)^p y_0(t), \sqrt{B_H}B_c^k(tB_c)^q y_0(t) \right\rangle,
\end{equation}
for the sum in \eqref{y(t)}, we need to distinguish several cases when estimating the order of \eqref{inner-prod}:

For $n+p < m$ we have the estimate
\begin{align*}
\Vert \sqrt{B_H} B_c^n (tB_c)^p y_0(t) \Vert = \mathcal{O}(t^{m+1-n})
\end{align*}
using \eqref{eq:decay_condition_3}, and
\begin{align*}
\Vert \sqrt{B_H} B_c^k (tB_c)^q y_0(t) \Vert = \mathcal{O}(t^{m-k})
\end{align*}
using \eqref{eq:decay_condition_4} for any $k,\,q$. Together these estimates  yield
\begin{align*}
t^{1+k+n} \left\lvert \left\langle \sqrt{B_H} B_c^n (tB_c)^p y_0(t), \sqrt{B_H} B_c^k (tB_c)^q y_0(t) \right\rangle \right\rvert = \mathcal{O}(t^{2m+2}),
\end{align*}
contributing only to the remainder term in \eqref{eq:lower_bound_expression}. 
This last estimate also holds if $k+q < m$ and $n,p$ are arbitrary. In the case $n+p \geq m+1$ and $k+q \geq m$ (or $n+p \geq m$ and $k+q \geq m+1$), we get
\begin{align}\label{remain2}
&t^{1+k+n} \left\lvert \left\langle \sqrt{B_H} B_c^n (tB_c)^p y_0(t), \sqrt{B_H} B_c^k (tB_c)^q y_0(t) \right\rangle \right\rvert \\
&= t^{1+k+n+p+q} \left\lvert \left\langle \sqrt{B_H} B_c^{n+p} y_0(t), \sqrt{B_H} B_c^{k+q} y_0(t) \right\rangle \right\rvert = \mathcal{O}(t^{2m+2}) \nonumber
\end{align}
using $\Vert \sqrt{B_H} B_c^{n+p} y_0(t) \Vert = \mathcal{O}(1)$ and $\Vert \sqrt{B_H} B_c^{k+q} y_0(t) \Vert = \mathcal{O}(1)$. 
So, \eqref{remain2} also contributes only to the remainder term in \eqref{eq:lower_bound_expression}. 

To sum up, if we expand $y(t)$ in \eqref{eq:lower_bound_expression},
only the terms with $n+p=k+q=m$ 
may \emph{not} be part of the remainder term. Hence, \eqref{eq:lower_bound_expression} simplifies to 
\begin{align*}
&\left\Vert e^{-B_ct} \frac{y(t)}{\Vert y(t) \Vert} \right\Vert^2 = 1 - \frac{2}{\|y(t)\|^2}t^{2m+1} \bigg( \sum_{n=0}^{m-1} \sum_{k=0}^{m-1} \frac{(-1)^{k+n}}{(k+n+1)!} \binom{k+n}{k} \lambda_{m-n} {\lambda_{m-k}} \\
& \qquad + 2 \sum_{n=0}^{m-1} \frac{(-1)^{m+n}}{(m+n+1)!} \binom{m+n}{m}   \lambda_{m-n}  + \frac{1}{(2m+1)!} \binom{2m}{m} \bigg)\left\Vert \sqrt{B_H} B_c^m y_0(t) \right\Vert^2 + o(t^{2m+1}) \\
&\qquad = 1 -2t^{2m+1} \bigg( \sum_{n=0}^{m-1} \sum_{k=0}^{m-1} \frac{(-1)^{k+n}}{(k+n+1)!} \binom{k+n}{k} \lambda_{m-n} {\lambda_{m-k}} \\
& \qquad + 2 \sum_{n=0}^{m-1} \frac{(-1)^{m+n}}{(m+n+1)!} \binom{m+n}{m}  \lambda_{m-n} + \frac{1}{(2m+1)!} \binom{2m}{m} \bigg)\left\Vert \sqrt{B_H} B_c^m y_0(t) \right\Vert^2 + o(t^{2m+1}),
\end{align*}
where we have used $\frac{1}{\|y(t)\|^2} = 1+\mathcal O(t)$ in the last equality. By construction of $\lambda_1, \dots, \lambda_m$ (see Lemma \ref{min_of_quadratic_form}) this yields
\begin{align*}
\left\Vert e^{-B_c t} \frac{y(t)}{\Vert y(t) \Vert} \right\Vert^2 = 1 - t^{2m+1} \frac{2}{(2m+1)!\binom{2m}{m}} \left\Vert \sqrt{B_H} B_c^m y_0(t) \right\Vert^2 + o(t^{2m+1}),
\end{align*}
and due to \eqref{eq:infimum_convergence} (using $\| \sqrt{B_H} B_c^m y_0(t) \|^2=\lim_{t\to0} \| \sqrt{B_H} B_c^m y_0(t) \|^2+o(t)$) we get
\begin{align*}
\left\Vert e^{-B_c t} \frac{y(t)}{\Vert y(t) \Vert} \right\Vert^2 = 1 - t^{2m+1} \frac{2}{(2m+1)!\binom{2m}{m}} 
\lim_{\delta \to 0}\ \inf_{\substack{\Vert \sqrt{B_H} B_c^p y \Vert \leq \delta,\, 0 \leq p < m \\ \Vert y \Vert = 1}} \left\Vert \sqrt{B_H} B_c^m y \right\Vert^2 + o(t^{2m+1}).
\end{align*}
The claim now follows immediately, since
\begin{align*}
\left\Vert e^{-B_c t} \right\Vert^2 &\geq \left\Vert e^{-B_ct} \frac{y(t)}{\Vert y(t) \Vert} \right\Vert^2.
\end{align*}
\end{proof}

\begin{remark}\label{asymptotics_for_propagator_norm}
{\rm The lower bound in Theorem \ref{operator_norm_lower_bound} already implies that, if $B_c$ has hypocoercivity index greater than or equal to $m \in \N_0$, then $\Vert e^{-B_c t} \Vert = 1 - \mathcal{O}(t^{2m+1})$. 
}
\end{remark}

\begin{lemma}\label{bilinear_form_minimization}
Let $z_0, \dots, z_m \in \HH$ with some $m\in\N_0$ fixed. Then there exists $C_m > 0$ (independent of $z_0,...,z_m$) such that
\begin{align*}
\sum_{n=0}^m \sum_{k=0}^m \frac{(-1)^{k+n}}{(k+n+1)!} \binom{k+n}{k} \left\langle z_{m-n}, z_{m-k} \right\rangle \geq C_m \left\Vert z_j \right\Vert^2
\end{align*}
for all $0 \leq j \leq m$ and for $j=0$ we explicitly obtain
\begin{align}\label{0-bound}
\sum_{n=0}^m \sum_{k=0}^m \frac{(-1)^{k+n}}{(k+n+1)!} \binom{k+n}{k} \left\langle z_{m-n}, z_{m-k} \right\rangle \geq \frac{1}{(2m+1)!\binom{2m}{m}} \left\Vert z_0 \right\Vert^2.
\end{align}
\end{lemma}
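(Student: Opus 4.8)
### Proof plan for Lemma \ref{bilinear_form_minimization}

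My plan is to reduce the stated operator-valued inequality to the scalar Hilbert-matrix fact already established in Lemma \ref{min_of_quadratic_form}, by diagonalizing in the Hilbert-space variable. First I would observe that the coefficient matrix $Q_m = \left(\frac{(-1)^{k+n}}{(k+n+1)!}\binom{k+n}{k}\right)_{n,k=0}^{m}$ appearing on the left is, after the very rescaling $\lambda_p \mapsto (-1)^{m-p}(m-p)!\,\lambda_p$ used in the proof of Lemma \ref{min_of_quadratic_form}, congruent to the $(m+1)$-dimensional Hilbert matrix $H_{m+1}$; in particular $Q_m$ is Hermitian positive definite. Write $D = \diag\bigl((-1)^{m}m!, (-1)^{m-1}(m-1)!, \dots, 1\bigr)$ so that $D^* Q_m D = H_{m+1}$ (indices arranged consistently with the convention $z_{m-n}$, $z_{m-k}$ used in the statement), and set $w_i \coloneqq (D^{-1})$-transformed combinations of the $z$'s — concretely $w_{i} \coloneqq \frac{(-1)^{m-i}}{(m-i)!} z_{i}$ for $0\le i\le m$, so that the quadratic expression $\sum_{n,k} (Q_m)_{n,k}\langle z_{m-n}, z_{m-k}\rangle$ equals $\sum_{n,k} (H_{m+1})_{n,k}\langle w_{m-n}, w_{m-k}\rangle$ up to the bookkeeping of the relabeling.

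The key step is then: for a positive definite $(m+1)\times(m+1)$ Hermitian matrix $A=(a_{pq})$ and vectors $w_0,\dots,w_m$ in $\HH$, one has $\sum_{p,q} a_{pq}\langle w_{m-q}, w_{m-p}\rangle \ge \lambda_{\min}(A)\sum_p \|w_p\|^2 \ge \lambda_{\min}(A)\|w_j\|^2$ for each fixed $j$; but I need the \emph{sharp} constant $\frac{1}{(2m+1)!\binom{2m}{m}}$ for $j=0$ rather than $\lambda_{\min}$, so I would instead argue directly by "completing the square" against the single coordinate $w_m$ (which corresponds to $z_0$). Expand in a fixed orthonormal basis $(e_\ell)$ of $\HH$, writing $w_p = \sum_\ell c^p_\ell e_\ell$ with $(c^p_\ell)_\ell \in \ell^2$; orthonormality collapses the double Hilbert-space sum to $\sum_\ell \sum_{n,k}(H_{m+1})_{n,k} c^{m-n}_\ell \overline{c^{m-k}_\ell}$, i.e. a sum over $\ell$ of scalar quadratic forms in the coordinate vectors $c_\ell = (c^0_\ell,\dots,c^m_\ell)^T$. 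For each $\ell$ separately, the scalar inequality $c_\ell^* H_{m+1} c_\ell \ge \frac{1}{(2m+1)!\binom{2m}{m}}\,|c^0_\ell|^2$ is exactly the content of the constrained minimization solved in Lemma \ref{min_of_quadratic_form}: that lemma says the minimum of $\lambda^* H_{m+1}\lambda$ subject to the last coordinate equal to $(-1)^m/m!$ is $\frac{1}{(m!)^2(H_{m+1}^{-1})_{m+1,m+1}} = \frac{1}{(2m+1)!\binom{2m}{m}}$; by homogeneity this is precisely the statement that $\lambda^* H_{m+1}\lambda \ge \frac{1}{(2m+1)!\binom{2m}{m}}\cdot(m!)^2\,|\lambda_{m+1}|^2$ — no wait, I must be careful with the constant: since the constraint value was $(-1)^m/m!$ with modulus $1/m!$, homogeneity gives $\lambda^* H_{m+1}\lambda \ge \frac{1}{(m!)^2(H_{m+1}^{-1})_{m+1,m+1}}\cdot (m!)^2 |\lambda_{m+1}|^2 = \frac{|\lambda_{m+1}|^2}{(H_{m+1}^{-1})_{m+1,m+1}}$, and $(H_{m+1}^{-1})_{m+1,m+1} = (2m+1)\binom{2m}{m}^2$, while the relabeling back to the $z$-variables turns $|\lambda_{m+1}|^2$ into the right multiple of $\|z_0\|^2$. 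I would track this constant once, carefully, in the scalar setting, and then sum over $\ell$ to get \eqref{0-bound}.

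For the general $j$ (not just $j=0$): here I do not need the sharp constant, so the cleanest route is to note that the quadratic form $\lambda \mapsto \lambda^* H_{m+1}\lambda$ restricted to the hyperplane $\{\lambda_{j+1} = 1\}$ (in the appropriate indexing) is a positive definite quadratic with a positive minimum $c_j > 0$ — again by positive definiteness of $H_{m+1}$, via the same Lagrange-multiplier computation, giving $c_j = 1/(H_{m+1}^{-1})_{j+1,j+1} > 0$; then homogeneity and the orthonormal-basis expansion yield the scalar bound $c_\ell^* H_{m+1} c_\ell \ge c_j |c^j_\ell|^2$ for every $\ell$, and summing over $\ell$ and converting back through the rescaling produces $C_m := \min_{0\le j\le m} c_j \cdot (\text{rescaling factors})$, which depends only on $m$. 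The main obstacle I anticipate is purely bookkeeping: keeping the index conventions ($z_{m-n}$ vs. $z_n$, the relabeling $\lambda_p \mapsto \lambda_{m-p+1}$, the sign/factorial rescaling) consistent so that the sharp constant emerges correctly, and making sure the $\ell^2$-summability justifies the interchange of the $\ell$-sum with the finite $(n,k)$-sums — the latter is routine since all sums over $n,k$ are finite and each $(c^p_\ell)_\ell \in \ell^2$. There is no analytic difficulty beyond Lemma \ref{min_of_quadratic_form}; the content is entirely the "pointwise-in-$\ell$" reduction plus the homogeneity rescaling of the already-computed minimum.
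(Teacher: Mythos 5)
Your argument is correct, and it reaches both conclusions of the lemma, but it runs along a different decomposition than the paper's. The paper fixes $j$, writes $z_p=\lambda_p z_j+\tilde z_p$ with $\tilde z_p\perp z_j$ (so automatically $\lambda_j=1$), and splits the form into a scalar part $\sum_{n,k}\frac{(-1)^{k+n}}{(k+n+1)!}\binom{k+n}{k}\lambda_{m-n}\overline{\lambda_{m-k}}\,\|z_j\|^2$ plus a residual form in the $\tilde z_p$, which is discarded as nonnegative (it is the trace of a product of two Hermitian positive semidefinite matrices, the coefficient matrix being a congruently transformed Hilbert matrix); the scalar part is then bounded below by $C_m$ via positive definiteness, and for $j=0$ sharply by Lemma \ref{min_of_quadratic_form} using $\lambda_0=1$. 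You instead expand in an orthonormal basis of $\HH$, reduce to a direct sum over the basis index of finite-dimensional scalar forms $c_\ell^*H_{m+1}c_\ell$, and apply coordinatewise the homogenized version of Lemma \ref{min_of_quadratic_form}, namely $v^*H_{m+1}v\ge \lvert v_i\rvert^2/\big(H_{m+1}^{-1}\big)_{ii}$, before summing in $\ell$. Both routes rest on exactly the same Hilbert-matrix input, and your constant-tracking for $j=0$ is right: the factor $(m!)^2$ from the rescaling $w_0=\frac{(-1)^m}{m!}z_0$ cancels against the constrained minimum, giving $\frac{1}{(2m+1)!\binom{2m}{m}}\|z_0\|^2$. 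What your version buys is a uniform treatment of all $j$ (an explicit $C_m$ as a minimum of diagonal entries of $H_{m+1}^{-1}$, up to factorial rescalings) from one scalar inequality; what the paper's version buys is a basis-free, two-line reduction in which only the single direction $z_j$ is projected out. Two small bookkeeping points, neither of which affects validity: with your convention $w_i=\frac{(-1)^{m-i}}{(m-i)!}z_i$ sitting in slot $m-i$, the general-$j$ constant involves $\big(H_{m+1}^{-1}\big)_{m-j+1,m-j+1}$ rather than $\big(H_{m+1}^{-1}\big)_{j+1,j+1}$, and it must still be multiplied by the rescaling factor $((m-j)!)^{-2}$; since only positivity of $C_m$ is claimed for general $j$, this is harmless.
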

\begin{proof}
Let $0 \leq j \leq m$ be arbitrary but fixed, and let 
\begin{align}\label{eq:orthogonal_decomposition}
z_p = \lambda_p z_j + \tilde{z}_p
\end{align}
be an orthogonal decomposition, i.e., $\langle \tilde{z}_p, z_j \rangle = 0$ for every $0 \leq p \leq m$. Then
\begin{align}\label{eq:bilinear_form}
&\sum_{n=0}^m \sum_{k=0}^m \frac{(-1)^{k+n}}{(k+n+1)!} \binom{k+n}{k} \left\langle z_{m-n}, z_{m-k} \right\rangle \nonumber \\
&= \sum_{n=0}^m \sum_{k=0}^m \frac{(-1)^{k+n}}{(k+n+1)!} \binom{k+n}{k} \left\langle \lambda_{m-n} z_j + \tilde{z}_{m-n}, \lambda_{m-k} z_j + \tilde{z}_{m-k} \right\rangle \nonumber \\
&= \sum_{n=0}^m \sum_{k=0}^m \frac{(-1)^{k+n}}{(k+n+1)!} \binom{k+n}{k} \lambda_{m-n} \overline{\lambda_{m-k}} \Vert z_j \Vert^2 + \sum_{n=0}^m \sum_{k=0}^m \frac{(-1)^{k+n}}{(k+n+1)!} \binom{k+n}{k} \left\langle \tilde{z}_{m-n}, \tilde{z}_{m-k} \right\rangle.
\end{align}
Now we note that 
\begin{align*}
\left[ \frac{(-1)^{k+n}}{(k+n+1)!} \binom{k+n}{k}\right]_{n,k=0,...,m}\ge C_m \Id,
\end{align*}
since it is just a 
congruently transformed Hilbert matrix (compare with \eqref{eq:constrained_minimization}). Consequently,
\begin{align*}
\sum_{n=0}^{m} \sum_{k=0}^{m} \frac{(-1)^{k+n}}{(k+n+1)!} \binom{k+n}{k} \langle \tilde{z}_{m-n}, \tilde{z}_{m-k} \rangle \geq 0,
\end{align*}
being the trace of the product of two Hermitian, positive semidefinite matrices. Moreover,
\begin{align*}
\sum_{n=0}^m \sum_{k=0}^m \frac{(-1)^{k+n}}{(k+n+1)!} \binom{k+n}{k} \lambda_{m-n} \overline{\lambda_{m-k}} \Vert z_j \Vert^2 \geq C_m \Vert z_j \Vert^2,
\end{align*}
where we additionally have used $\lambda_j = 1$ (due to \eqref{eq:orthogonal_decomposition}). Using these lower bounds in \eqref{eq:bilinear_form} yields
\begin{align*}
\sum_{n=0}^m \sum_{k=0}^m \frac{(-1)^{k+n}}{(k+n+1)!} \binom{k+n}{k} \left\langle z_{m-n}, z_{m-k} \right\rangle \geq C_m \Vert z_j \Vert^2.
\end{align*}
The last claim, i.e.\ \eqref{0-bound}, follows from using $\lambda_0=1$ in the first sum of \eqref{eq:bilinear_form} and applying Lemma \ref{min_of_quadratic_form} in the last step:
\begin{align*}
&\sum_{n=0}^m \sum_{k=0}^m \frac{(-1)^{k+n}}{(k+n+1)!} \binom{k+n}{k} \lambda_{m-n} \overline{\lambda_{m-k}} \Vert z_0 \Vert^2 \\
&= \bigg[ \sum_{n=0}^{m-1} \sum_{k=0}^{m-1} \frac{(-1)^{k+n}}{(k+n+1)!} \binom{k+n}{k} \lambda_{m-n} \overline{\lambda_{m-k}} + 2 \sum_{n=0}^{m-1} \frac{(-1)^{m+n}}{(m+n+1)!} \binom{m+n}{m} \Re ( \lambda_{m-n} ) \\
& \ \ \ \ \ \ + \frac{1}{(2m+1)!} \binom{2m}{m} \bigg] \Vert z_0 \Vert^2 \\
&\geq \frac{1}{(2m+1)!\binom{2m}{m}} \left\Vert z_0 \right\Vert^2. \qedhere
\end{align*}
\end{proof}

\begin{lemma}\label{decay_properties}
Let $B_c\in \mathcal B(\HH)$ have hypocoercivity index greater than or equal to $m \in \N_0$ and let $y \colon \R_0^+ \to \HH$ satisfy $\Vert y(t) \Vert = 1$ for $t \in \R_0^+$ and $\Vert e^{-B_ct} y(t) \Vert - \Vert e^{-B_c t} \Vert = o(t^{2m+1})$ for $t \to 0+$. Then $\Vert \sqrt{B_H} B_c^{p} y(t) \Vert = o(t^{m-1-p})$ for $0 \leq p < m$.
\end{lemma}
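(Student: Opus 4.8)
The plan is to prove, by induction on $\ell=0,1,\dots,m$, the statement
\[
(\mathrm P_\ell):\qquad \bigl\Vert \sqrt{B_H}\,B_c^{\,n} y(t)\bigr\Vert = o\bigl(t^{\,\ell-1-n}\bigr)\ \text{ as } t\to0+,\quad\text{for all } 0\le n<\ell,
\]
so that $(\mathrm P_m)$ is exactly the claim. The base case $(\mathrm P_0)$ is vacuous, and for $m=0$ there is nothing to prove, so I assume $m\ge1$. First I would record a consequence of the hypotheses: $-B_c$ being semi-dissipative makes $e^{-B_ct}$ a contraction, so $\Vert e^{-B_ct}y(t)\Vert\le1$, while Remark~\ref{asymptotics_for_propagator_norm} (applicable since the HC-index of $B_c$ is $\ge m$) gives $\Vert e^{-B_ct}\Vert=1-\mathcal O(t^{2m+1})$; combined with the hypothesis $\Vert e^{-B_ct}y(t)\Vert-\Vert e^{-B_ct}\Vert=o(t^{2m+1})$ this yields $0\le 1-\Vert e^{-B_ct}y(t)\Vert^2=\mathcal O(t^{2m+1})$ as $t\to0+$.

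For the inductive step, assume $(\mathrm P_\ell)$ for some $0\le\ell<m$. Since $\Vert y(t)\Vert\equiv1$ and $\sqrt{B_H}B_c^{\,n}\in\B(\HH)$, all hypotheses of Lemma~\ref{lem_propagator_simplification} (with its integer $m$ taken to be $\ell$) are met, so
\[
\bigl\Vert e^{-B_ct}y(t)\bigr\Vert^2=1-2t\,Q_\ell(t)+o\bigl(t^{2\ell+1}\bigr),\qquad
Q_\ell(t):=\sum_{n=0}^{\ell}\sum_{k=0}^{\ell}\frac{(-t)^{k+n}}{(k+n+1)!}\binom{k+n}{k}\bigl\langle\sqrt{B_H}B_c^{\,n}y(t),\sqrt{B_H}B_c^{\,k}y(t)\bigr\rangle .
\]
Writing $(-t)^{k+n}\langle\sqrt{B_H}B_c^{\,n}y(t),\sqrt{B_H}B_c^{\,k}y(t)\rangle=(-1)^{k+n}\langle t^{\,n}\sqrt{B_H}B_c^{\,n}y(t),t^{\,k}\sqrt{B_H}B_c^{\,k}y(t)\rangle$ and applying Lemma~\ref{bilinear_form_minimization} to the vectors $z_p:=t^{\,\ell-p}\sqrt{B_H}B_c^{\,\ell-p}y(t)$ ($0\le p\le\ell$) identifies $Q_\ell(t)$ with the quadratic form treated there and gives, with a constant $C_\ell>0$ independent of $t$, the lower bound $Q_\ell(t)\ge C_\ell\,t^{2i}\Vert\sqrt{B_H}B_c^{\,i}y(t)\Vert^2\ge0$ for every $0\le i\le\ell$.

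It then remains to combine these. From the expansion $2t\,Q_\ell(t)=\bigl(1-\Vert e^{-B_ct}y(t)\Vert^2\bigr)+o(t^{2\ell+1})$, and since $\ell<m$ makes $1-\Vert e^{-B_ct}y(t)\Vert^2=\mathcal O(t^{2m+1})=o(t^{2\ell+1})$, we obtain $0\le Q_\ell(t)=o(t^{2\ell})$. Feeding this into the lower bound for $Q_\ell$ yields $\Vert\sqrt{B_H}B_c^{\,i}y(t)\Vert=o(t^{\ell-i})$ for all $0\le i\le\ell$, which is precisely $(\mathrm P_{\ell+1})$. Iterating this step from $\ell=0$ up to $\ell=m$ proves the lemma.

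Given the prior lemmas, this is essentially bookkeeping; the point requiring care — more than a genuine obstacle — is twofold. A single pass through Lemma~\ref{lem_propagator_simplification} cannot deliver the full decay rate, because its remainder $o(t^{2\ell+1})$ only becomes useful once $(\mathrm P_\ell)$ is already known, which forces the bootstrap, gaining one order per step and terminating exactly at $\ell=m$. And it is the \emph{joint} positivity of $Q_\ell$, supplied by Lemma~\ref{bilinear_form_minimization}, that converts the scalar estimate $Q_\ell(t)=o(t^{2\ell})$ into separate bounds on each $\Vert\sqrt{B_H}B_c^{\,i}y(t)\Vert$; a term-by-term comparison inside $Q_\ell$ would fail, as its summands are not sign-definite. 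I would double-check only the index matching between $Q_\ell(t)$ and Lemma~\ref{bilinear_form_minimization} and that the implication $(\mathrm P_\ell)\Rightarrow(\mathrm P_{\ell+1})$ lines up (it does: level $\ell$ gives $o(t^{\ell-1-n})$ for $n<\ell$, the step upgrades this to $o(t^{\ell-n})$ for $0\le n\le\ell$, i.e.\ level $\ell+1$).
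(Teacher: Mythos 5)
Your proof is correct and takes essentially the same route as the paper: the paper runs the induction on the lemma's parameter $m$ itself (with the hypothesis strengthened to $o(t^{2m+3})$ at the next level), whereas you fix $m$ and bootstrap an auxiliary statement over $\ell \le m$, but both arguments rest on exactly the same ingredients — Lemma~\ref{lem_propagator_simplification}, the joint positivity from Lemma~\ref{bilinear_form_minimization}, and the $1-\mathcal{O}(t^{2m+1})$ bound of Remark~\ref{asymptotics_for_propagator_norm} — used in the same way. Your index matching $z_p = t^{\ell-p}\sqrt{B_H}B_c^{\ell-p}y(t)$ with Lemma~\ref{bilinear_form_minimization} is handled correctly.
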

\begin{proof}
We proceed by induction over $m$. Concerning $m=0$ one could choose $y(t)\equiv y_0$ for any $y_0\in\SS$ but, anyhow, there is nothing to show. 

Next, we assume that the statement holds for $m$ and we want to show the corresponding statement for $m+1$. So let us assume that $B_c$ has hypocoercivity index greater than or equal to $m+1$ and $y \colon \R_0^+ \to \HH$ satisfies $\Vert y(t) \Vert = 1$ for $t \in \R_0^+$ and $\Vert e^{-B_c t} y(t) \Vert - \Vert e^{-B_c t} \Vert = o(t^{2m+3})$ for $t \to 0+$. Then we can use the induction hypothesis to deduce that $\Vert \sqrt{B_H} B_c^{p} y(t) \Vert = o(t^{m-1-p})$ for $0 \leq p < m$ and thus we can expand the squared norm using Lemma \ref{lem_propagator_simplification}:
\begin{align*}
\Vert e^{-B_c t} y(t) \Vert^2 &= 1 - 2t \sum_{n=0}^{m} \sum_{k=0}^{m} \frac{(-t)^{k+n}}{(k+n+1)!} \binom{k+n}{k} \left\langle \sqrt{B_H} B_c^n y(t), \sqrt{B_H} B_c^k y(t) \right\rangle + o(t^{2m+1}) \\
&= 1 - 2t\sum_{n=0}^{m} \sum_{k=0}^{m} \frac{(-1)^{k+n}}{(k+n+1)!} \binom{k+n}{k} \left\langle t^n \sqrt{B_H} B_c^n y(t), t^k\sqrt{B_H} B_c^k y(t) \right\rangle + o(t^{2m+1}) \\
&\leq 1 -2t C_m t^{2p} \Vert \sqrt{B_H} B_c^p y(t) \Vert^2 + o(t^{2m+1})
\end{align*}
for any $0 \leq p < m+1$, where the last inequality with corresponding constant $C_m > 0$ follows from Lemma \ref{bilinear_form_minimization} applied to $z_j \coloneqq t^p \sqrt{B_H} B_c^p y(t)$. Consequently, we get
\begin{align*}
2C_mt^{2p+1} \Vert \sqrt{B_H} B_c^p y(t) \Vert^2 \leq 1 - \Vert e^{-B_c t} y(t) \Vert^2 + o(t^{2m+1}).
\end{align*}
Then Remark \ref{asymptotics_for_propagator_norm} yields $1 - \Vert e^{-B_ct} \Vert = \mathcal{O}(t^{2m+3})$ and, together with the assumption that $\Vert e^{-B_ct} y(t) \Vert - \Vert e^{-B_c t} \Vert = o(t^{2m+1})$, we get
\begin{align*}
2C_mt^{2p+1} \Vert \sqrt{B_H} B_c^p y(t) \Vert^2 \leq o(t^{2m+1}).
\end{align*}
Hence,
\begin{align*}
\Vert \sqrt{B_H} B_c^p y(t) \Vert = o(t^{m-p})
\end{align*}
for any $0 \leq p < m+1$.
\end{proof}

In Lemma \ref{decay_properties}, a function $y(t)$ satisfying the listed condition can always be found due to the definition of the operator norm $\|e^{-B_c t}\|$ as a supremum.

The following theorem yields the optimal upper bound for the asymptotic expansion of the propagator norm. 

\begin{theorem}\label{operator_norm_upper_bound}
Let $B_c\in \mathcal B(\HH)$ have hypocoercivity index $m_{HC} \in \N_0$.  Then
\begin{align*}
\Vert e^{-B_ct} \Vert^2 \leq 1 - t^{2m_{HC}+1} \frac{2}{(2m_{HC}+1)!\binom{2m_{HC}}{m_{HC}}} \lim_{\delta \to 0}\ \inf_{\substack{\Vert \sqrt{B_H} B_c^p y \Vert \leq \delta,\, 0 \leq p < m_{HC} \\ \Vert y \Vert = 1}} \left\Vert \sqrt{B_H} B_c^{m_{HC}} y \right\Vert^2 + o(t^{2m_{HC}+1})
\end{align*}
for $t\to0+$.
\end{theorem}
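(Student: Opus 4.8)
The plan is to mirror the proof of Theorem~\ref{operator_norm_lower_bound}, but with $y(t)$ chosen to (almost) \emph{maximize} $\|e^{-B_ct}y(t)\|$ rather than to make $\|\sqrt{B_H}B_c^{m}y(t)\|$ small. Write $m\coloneqq m_{HC}$ and $c\coloneqq\tfrac{2}{(2m+1)!\binom{2m}{m}}$, and for $\delta>0$ abbreviate
\begin{align*}
I_\delta\coloneqq\inf\bigl\{\,\|\sqrt{B_H}B_c^{m}y\|^2 \;:\; y\in\SS,\ \|\sqrt{B_H}B_c^{p}y\|\le\delta \text{ for } 0\le p<m\,\bigr\},
\end{align*}
so that $L\coloneqq\lim_{\delta\to0}I_\delta$ is the quantity in the statement; it is finite (as $\sqrt{B_H}B_c^{m}\in\B(\HH)$) and $I_\delta$ increases as $\delta\downarrow0$. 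First I would use that $\|e^{-B_ct}\|=\sup_{y\in\SS}\|e^{-B_ct}y\|$ to pick, for each $t>0$, a vector $y(t)\in\SS$ within $t^{2m+2}$ of this supremum, so that $\|e^{-B_ct}y(t)\|-\|e^{-B_ct}\|=o(t^{2m+1})$ as $t\to0+$ (and set $y(0)$ arbitrary in $\SS$). Since $B_c$ has hypocoercivity index $m$, Lemma~\ref{decay_properties} then applies and gives $\|\sqrt{B_H}B_c^{p}y(t)\|=o(t^{m-1-p})$ for $0\le p<m$; in particular each of these norms tends to $0$ as $t\to0+$.

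Next I would plug this $y$ into Lemma~\ref{lem_propagator_simplification} (with the present $m$; its hypotheses hold since $\|y(t)\|=1$ and by the decay just stated) to obtain
\begin{align*}
\|e^{-B_ct}y(t)\|^2 = 1 - 2t\sum_{n=0}^{m}\sum_{k=0}^{m}\frac{(-t)^{k+n}}{(k+n+1)!}\binom{k+n}{k}\bigl\langle \sqrt{B_H}B_c^{n}y(t),\,\sqrt{B_H}B_c^{k}y(t)\bigr\rangle + o(t^{2m+1}),
\end{align*}
and then apply the lower bound \eqref{0-bound} of Lemma~\ref{bilinear_form_minimization} to the vectors $z_j\coloneqq t^{\,m-j}\sqrt{B_H}B_c^{\,m-j}y(t)$, $0\le j\le m$. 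Since $t$ is real, the double sum equals $\sum_{n,k=0}^{m}\frac{(-1)^{k+n}}{(k+n+1)!}\binom{k+n}{k}\langle z_{m-n},z_{m-k}\rangle$, which by \eqref{0-bound} is at least $\tfrac{1}{(2m+1)!\binom{2m}{m}}\|z_0\|^2=\tfrac{t^{2m}}{(2m+1)!\binom{2m}{m}}\|\sqrt{B_H}B_c^{m}y(t)\|^2$; hence $\|e^{-B_ct}y(t)\|^2\le 1-c\,t^{2m+1}\|\sqrt{B_H}B_c^{m}y(t)\|^2+o(t^{2m+1})$. As $\|e^{-B_ct}y(t)\|$ and $\|e^{-B_ct}\|$ both lie in $[0,1]$ and differ by $o(t^{2m+1})$, squaring transfers this to
\begin{align*}
\|e^{-B_ct}\|^2 \le 1 - c\,t^{2m+1}\,\|\sqrt{B_H}B_c^{m}y(t)\|^2 + o(t^{2m+1}).
\end{align*}

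The main (and only genuinely delicate) step I anticipate is then converting $\|\sqrt{B_H}B_c^{m}y(t)\|^2$ into the double limit of the statement, i.e.\ interchanging $t\to0+$ and $\delta\to0$. I would fix $\delta>0$: because $\|\sqrt{B_H}B_c^{p}y(t)\|\to0$ for $0\le p<m$, there is $t_\delta>0$ with $y(t)$ in the admissible set defining $I_\delta$ for all $0<t<t_\delta$, so $\|\sqrt{B_H}B_c^{m}y(t)\|^2\ge I_\delta$ there. Dividing the last display by $t^{2m+1}>0$ and taking $\limsup_{t\to0+}$ gives $\limsup_{t\to0+}\tfrac{\|e^{-B_ct}\|^2-1}{t^{2m+1}}\le -c\,I_\delta$ for every $\delta>0$; letting $\delta\to0$ (so $I_\delta\uparrow L$) yields $\limsup_{t\to0+}\tfrac{\|e^{-B_ct}\|^2-1}{t^{2m+1}}\le -cL$, which is exactly the claimed inequality. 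For $m=0$ the admissible set is all of $\SS$, the constraints and Lemma~\ref{decay_properties} are vacuous, and the estimates collapse to the trivial bound $\|\sqrt{B_H}y(t)\|^2\ge\inf_{y\in\SS}\|\sqrt{B_H}y\|^2$, so the argument still applies.
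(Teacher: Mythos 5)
Your proposal is correct and follows essentially the same route as the paper's proof: pick a near-maximizing $y(t)\in\SS$ with $\|e^{-B_ct}y(t)\|-\|e^{-B_ct}\|=o(t^{2m+1})$, invoke Lemma~\ref{decay_properties}, expand via Lemma~\ref{lem_propagator_simplification}, bound the quadratic form from below by $\tfrac{t^{2m}}{(2m+1)!\binom{2m}{m}}\|\sqrt{B_H}B_c^{m}y(t)\|^2$ using \eqref{0-bound}, and then for each fixed $\delta$ use admissibility of $y(t)$ for small $t$ before letting $\delta\to0$. The only cosmetic difference is your use of $\limsup$ (a slight tidying of the paper's $\lim$) and the explicit $m=0$ remark; no gap.
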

Recall from Theorem \ref{operator_norm_lower_bound} that for $\mHC=0$ the infimum is just taken over $\SS$.
\begin{proof}
Let $y(t) \in \HH$ with $\Vert y(t) \Vert = 1$ be such that $\Vert e^{-B_c t} y(t) \Vert - \Vert e^{-B_c t} \Vert = o(t^{2m+1})$. Here and in the remainder of this proof we abbreviate the HC-index by $m$. Then Lemma \ref{decay_properties} implies that $\Vert \sqrt{B_H} B_c^{n} y(t) \Vert = o(t^{m-1-n})$ for $0 \leq n < m$. Hence, we can apply Lemma \ref{lem_propagator_simplification} and obtain:
\begin{align*}
\Vert e^{-B_c t} y(t) \Vert^2 &= 1 - 2t \sum_{n=0}^{m} \sum_{k=0}^{m} \frac{(-t)^{k+n}}{(k+n+1)!} \binom{k+n}{k} \left\langle \sqrt{B_H} B_c^n y(t), \sqrt{B_H} B_c^k y(t) \right\rangle + o(t^{2m+1}) \\
&= 1 -2t \sum_{n=0}^m \sum_{k=0}^m \frac{(-1)^{k+n}}{(k+n+1)!} \binom{k+n}{k} \left\langle t^n \sqrt{B_H} B_c^n y(t), t^k \sqrt{B_H} B_c^k y(t) \right\rangle + o(t^{2m+1}) \\
&\leq 1-2t \frac{1}{(2m+1)!\binom{2m}{m}} t^{2m} \left\Vert \sqrt{B_H} B_c^m y(t) \right\Vert^2 + g(t) , \quad\mbox{with } g(t)=o(t^{2m+1}), 
\end{align*}
where we have used Lemma \ref{bilinear_form_minimization} with $z_0:=t^m\sqrt{B_H}B_c^m y(t)$ for the last inequality. Consequently, for any $\delta > 0$ we can find $T_\delta > 0$ such that for any $0 \leq t \leq T_{\delta}$ it holds that 

\begin{align*}
\Vert e^{-B_c t} y(t) \Vert^2 \leq 1 - c_\delta \,t^{2m+1}  + g(t), 
\end{align*}
with
$$
  c_\delta:= \frac{2}{(2m+1)!\binom{2m}{m}} \inf_{\substack{\Vert \sqrt{B_H} B_c^p y \Vert \leq \delta,\ 0 \leq p < m \\ \Vert y \Vert = 1}} \left\Vert \sqrt{B_H} B_c^m y \right\Vert^2 .
$$ 
Note that $y(t)\big|_{[0,T_\delta]}$ is inside the set in which we minimize. Moreover, the function $g$ is independent of $\delta$, $c_\delta$ increases as $\delta\to0$, but $T_\delta$ (typically) decreases to 0 as $\delta\to0$.

Using the fact that $\Vert e^{-B_c t} y(t) \Vert - \Vert e^{-B_ct} \Vert = o(t^{2m+1})$ we obtain for every fixed $\delta>0$:
$$
  \lim_{t\to0} \frac{\|e^{-B_c t}\|^2-1}{t^{2m+1}} =
  \lim_{t\to0} \frac{\|e^{-B_c t} y(t)\|^2-1}{t^{2m+1}} \le -c_\delta.
$$
Hence,
$$
  \lim_{t\to0} \frac{\|e^{-B_c t}\|^2-1}{t^{2m+1}} 
\le -\lim_{\delta\to0}c_\delta,
$$
which is a reformulation of the claim.  
\end{proof}

In summary, we have the following short-time decay result.
\begin{theorem}\label{thm_propagator_norm_asymptotics}
Let $-B_c\in \mathcal B(\HH)$ be semi-dissipative. Then $B_c$ is hypocoercive (with hypocoercivity index $m_{HC}\in\N_0$) if and only if
\begin{equation}\label{short-decay}
  \|e^{-B_c t}\| =1-ct^a + o(t^{a})\quad \mbox{for }t\to0+
\end{equation}
for some $a,\,c>0$. In this case, necessarily $a=2m_{HC}+1$ and
\begin{align}\label{c-operator}
c = \frac{1}{(2m_{HC}+1)!\,\binom{2m_{HC}}{m_{HC}}} \lim_{\delta \to 0}\ \inf_{\substack{\Vert \sqrt{B_H} B_c^p y \Vert \leq \delta,\ 0 \leq p < m_{HC} \\ \Vert y \Vert = 1}} \left\Vert \sqrt{B_H} B_c^{m_{HC}} y \right\Vert^2.
\end{align}
\end{theorem}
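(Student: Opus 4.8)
The plan is to obtain Theorem~\ref{thm_propagator_norm_asymptotics} by squeezing $\|e^{-B_ct}\|^2$ between the one-sided bounds already established, namely the lower bound of Theorem~\ref{operator_norm_lower_bound} and the upper bound of Theorem~\ref{operator_norm_upper_bound}, and then taking a square root; for the ``if'' direction one also needs a short contraction-semigroup argument to recover hypocoercivity and one more application of Theorem~\ref{operator_norm_lower_bound} to see that the hypocoercivity index is finite.

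Consider first the implication ``$B_c$ hypocoercive (with index $m_{HC}\in\N_0$) $\Rightarrow$ \eqref{short-decay}''. Write $m:=m_{HC}$ and let $L$ denote the iterated infimum/limit in \eqref{c-operator}. By minimality of $m_{HC}$ in \eqref{def:mHC}, the coercivity condition \eqref{eq:coercivity_assumption} for this $m$ fails for every $\kappa>0$, so Theorem~\ref{operator_norm_lower_bound} gives $\|e^{-B_ct}\|^2\ge 1-\tfrac{2}{(2m+1)!\binom{2m}{m}}L\,t^{2m+1}+o(t^{2m+1})$, while Theorem~\ref{operator_norm_upper_bound} (applicable since $B_c$ has index $m_{HC}=m$) gives the matching upper bound with the same constant. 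Hence $\|e^{-B_ct}\|^2=1-2c\,t^{2m+1}+o(t^{2m+1})$ with $c:=\tfrac{L}{(2m+1)!\binom{2m}{m}}$, and $\sqrt{1-s}=1-\tfrac{s}{2}+O(s^2)$ with $s=2c\,t^{2m+1}+o(t^{2m+1})\to 0$ yields \eqref{short-decay} with $a=2m_{HC}+1$ and constant \eqref{c-operator}. It remains to check $c>0$, i.e.\ $L>0$: if $L=0$, then for the $\kappa>0$ witnessing \eqref{def:mHC} with index $m$ we could pick $y\in\SS$ with $\|\sqrt{B_H}B_c^jy\|^2<\tfrac{\kappa}{m+1}$ for all $0\le j\le m$ simultaneously, so that $\langle\sum_{j=0}^m(B_c^*)^jB_HB_c^j\,y,y\rangle=\sum_{j=0}^m\|\sqrt{B_H}B_c^jy\|^2<\kappa$, contradicting \eqref{def:mHC}; in the boundary case $m_{HC}=0$ this is just the coercive case $B_H\ge\kappa I$, where $L\ge\kappa>0$ immediately.

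For the converse, assume \eqref{short-decay} with some $a,c>0$. Then $\|e^{-B_ct_0}\|<1$ for some $t_0>0$, and since $-B_c$ is semi-dissipative, $\|e^{-B_ct}\|\le 1$ for all $t\ge 0$; combining $\|e^{-B_c(kt_0)}\|\le\|e^{-B_ct_0}\|^k$ with the bound $\le 1$ on $[0,t_0)$ gives $\|e^{-B_ct}\|\le Ce^{-\lambda t}$ for suitable $C\ge 1,\ \lambda>0$, i.e.\ \eqref{def:HC}: $B_c$ is hypocoercive. Its index $m_{HC}$ is moreover finite: otherwise \eqref{eq:coercivity_assumption} would be violated for every $m\in\N_0$, so Theorem~\ref{operator_norm_lower_bound} would apply for every $m$, forcing $0\le 1-\|e^{-B_ct}\|^2=O(t^{2m+1})$ for all $m$, hence $1-\|e^{-B_ct}\|^2=o(t^N)$ for every $N\in\N$ --- contradicting $1-\|e^{-B_ct}\|^2=2ct^a+o(t^a)$ with $c>0$. (Alternatively, finiteness of $m_{HC}$ follows from the equivalence between hypocoercivity and finiteness of the index for bounded operators, cf.\ Theorem~\ref{ct_exponentially_stable} and the preliminaries.) Now the already-proved direction, applied to this $B_c$, yields $\|e^{-B_ct}\|=1-c't^{2m_{HC}+1}+o(t^{2m_{HC}+1})$ with $c'>0$ as in \eqref{c-operator}; equating this with $1-ct^a+o(t^a)$ and dividing by the smaller of $t^a$ and $t^{2m_{HC}+1}$ forces $a=2m_{HC}+1$ and then $c=c'$, which is the assertion.

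I expect the two delicate points to be (i) the strict positivity of the constant, $c>0$ (equivalently $L>0$) --- this is exactly where the \emph{minimality} of $m_{HC}$ is essential, and where the degenerate case $m_{HC}=0$ must be handled on its own --- and (ii) the bookkeeping needed to thread the $o(\cdot)$ and $O(\cdot)$ terms through the two directions together with the elementary uniqueness of the leading term of a power-type expansion; the contraction-semigroup step and the remaining algebra are routine.
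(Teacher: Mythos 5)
Your proposal is correct and follows essentially the same route as the paper's proof: the forward direction squeezes $\Vert e^{-B_c t}\Vert^2$ between the lower bound of Theorem~\ref{operator_norm_lower_bound} and the upper bound of Theorem~\ref{operator_norm_upper_bound}, and the converse uses the lower bound to cap the hypocoercivity index and then re-invokes the forward direction. The only deviations are minor refinements, both sound: you verify $c>0$ explicitly (the paper leaves this implicit in the minimality of $m_{HC}$ in \eqref{def:mHC}), and you replace the cited semigroup result (or the Lyapunov-functional alternative) for recovering hypocoercivity by the elementary contraction-plus-submultiplicativity argument.
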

\begin{proof}$ $\\
$\implies$: Since the HC-index of $B_c$ equals $m_{HC}$, condition \eqref{eq:coercivity_assumption} with $m:=m_{HC}$ is violated for all $\kappa>0$. Then this direction follows immediately from Theorem \ref{operator_norm_lower_bound} and Theorem \ref{operator_norm_upper_bound}. For later reference note that this direction of the proof only used that the HC-index of $B_c$ is $m_{HC}$ as defined by the coercivity estimate \eqref{def:mHC}, but we did not use the 
exponential decay as in \eqref{def:HC}. 

$\impliedby$: 
First, let $m \coloneqq \left\lceil \frac{a-1}{2} \right\rceil$. Then the operator $\sum_{p=0}^{m} (B_c^*)^p B_H B_c^p$ must be coercive because otherwise Theorem \ref{operator_norm_lower_bound} (applied with $m+1$ in place of $m$) would imply
\begin{align*}
\Vert e^{-B_c t} \Vert^2 \geq 1 - \tilde{c} t^{2m+3} + o(t^{2m+3})
\end{align*}
for some $\tilde{c} \geq 0$, which contradicts the assumption that \eqref{short-decay} as $2m+3 > 2m+1 \geq a$. Consequently, $B_c$ has a hypocoercivity index smaller than or equal to $m$. Then the claim, i.e., that $m_{HC}(B_c)=m$ and $a=2m_{HC}+1$, follows immediately from the converse direction.

Finally, \eqref{short-decay} also implies that $B_c$ is hypocoercive (by a general semigroup result in \cite[Proposition V.1.7]{engel_nagel}; see also \cite[Proposition 4.4]{AchAMN25}). Alternatively, the hypocoercivity of $B_c$, i.e., the exponential decay of the corresponding semigroup, can be obtained directly by considering the Lyapunov functional $\langle x,P x\rangle$ with the bounded, symmetric, coercive operator $P:=\sum_{j=0}^m (B_c^*)^j B_c^j$, see \cite[\S2]{AchAMN25b} for details.
\end{proof}

For matrices, Theorem~\ref{thm_propagator_norm_asymptotics} simplifies in two ways, the formula for the constant $c$ and the order of the remainder term:
\begin{corollary}\label{thm_propagator_norm_asymptotics_finite_dim}
Let $-B_c\in \mathcal B(\HH)$ be semi-dissipative and let $\HH$ be finite-dimensional. Then $B_c$ is hypocoercive (with hypocoercivity index $m_{HC}\in\N_0$) if and only if
\begin{align}\label{eq:finite_dim_propagator_norm}
\|e^{-B_c t}\| =1-ct^a + \mathcal{O}(t^{a+1})\quad \mbox{for }t\to 0+,
\end{align}
for some $a,\,c>0$. In this case, necessarily $a=2m_{HC}+1$ and
\begin{align}\label{c-matrix}
c = \frac{1}{(2m_{HC}+1)!\,\binom{2m_{HC}}{m_{HC}}} \min_{\substack{y \in \ker(\sqrt{B_H} B_c^p),\, 0 \leq p < m_{HC} \\ \Vert y \Vert = 1}} \left\Vert \sqrt{B_H} B_c^{m_{HC}} y \right\Vert^2.
\end{align}
\end{corollary}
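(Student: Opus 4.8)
The plan is to obtain Corollary~\ref{thm_propagator_norm_asymptotics_finite_dim} from Theorem~\ref{thm_propagator_norm_asymptotics} by strengthening only its two genuinely finite-dimensional features, namely the explicit form of the constant $c$ and the order of the remainder. The logical skeleton carries over unchanged: if \eqref{eq:finite_dim_propagator_norm} holds, then in particular \eqref{short-decay} holds, so the ``$\impliedby$'' part of Theorem~\ref{thm_propagator_norm_asymptotics} gives that $B_c$ is hypocoercive with $a=2m_{HC}+1$; conversely, if $B_c$ is hypocoercive with hypocoercivity index $m_{HC}$, the refined expansion with the constant \eqref{c-matrix} is precisely what the rest of the proof supplies. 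So the real work reduces to (i) identifying the constant and (ii) upgrading the remainder from $o(t^{2m_{HC}+1})$ to $\mathcal{O}(t^{2m_{HC}+2})$.

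For (i), I would recall that in finite dimensions the minimality of $m_{HC}$ forces the positive semidefinite matrix $M:=\sum_{j=0}^{m_{HC}-1}(B_c^*)^jB_HB_c^j$ to have a nontrivial kernel. Writing $M=\sum_{j=0}^{m_{HC}-1}(\sqrt{B_H}B_c^j)^*(\sqrt{B_H}B_c^j)$ as a sum of positive semidefinite matrices yields $\ker M=\bigcap_{j=0}^{m_{HC}-1}\ker(\sqrt{B_H}B_c^j)=:V_0\ne\{0\}$, so the minimum in \eqref{c-matrix} is attained on the compact set $V_0\cap\SS$. That it agrees with the quantity in \eqref{c-operator} is a routine compactness argument: the infimum over $D_\delta:=\{y\in\SS:\|\sqrt{B_H}B_c^py\|\le\delta,\ 0\le p<m_{HC}\}$ is monotone in $\delta$ and bounded above by its value on $V_0\cap\SS\subset D_\delta$, while a minimizing sequence $y_n\in D_{1/n}$ has (by compactness of $\SS$) a subsequence converging to some element of $V_0\cap\SS$, which gives the reverse inequality. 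This equality also drops out of the direct proof of (ii).

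For (ii), I would revisit both bounds of Section~\ref{sec:short-time}. The \emph{lower bound} becomes easy: redo the construction in the proof of Theorem~\ref{operator_norm_lower_bound} with the \emph{constant} vector $y_0(t)\equiv y_\ast$, where $y_\ast\in V_0\cap\SS$ attains the minimum in \eqref{c-matrix}, so that $y(t):=y_\ast+\sum_{p=1}^{m_{HC}}\lambda_p(tB_c)^py_\ast$ with the optimal real $\lambda_p$ of Lemma~\ref{min_of_quadratic_form}. Now $\sqrt{B_H}B_c^py_\ast=0$ \emph{exactly} for $p<m_{HC}$, so in Lemma~\ref{lem_propagator_simplification} and in the case distinction of Theorem~\ref{operator_norm_lower_bound} each term that was only $o(t^{2m_{HC}+1})$ either vanishes identically or is a genuine $\mathcal{O}(t^{2m_{HC}+2})$ (the Taylor remainder already present in Lemma~\ref{lem_propagator_simplification} is $\mathcal{O}(t^{2m_{HC}+2})$ anyway), and the unique surviving term of order $t^{2m_{HC}+1}$ equals $-2c\,t^{2m_{HC}+1}$ by Lemma~\ref{min_of_quadratic_form}. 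Hence $\|e^{-B_ct}\|^2\ge\left\Vert e^{-B_ct}\frac{y(t)}{\Vert y(t)\Vert}\right\Vert^{2}=1-2c\,t^{2m_{HC}+1}+\mathcal{O}(t^{2m_{HC}+2})$.

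The \emph{upper bound} is the main obstacle. Using compactness of $\SS$, I would pick for each small $t$ a genuine maximizer $y(t)\in\SS$, i.e.\ $\|e^{-B_ct}y(t)\|=\|e^{-B_ct}\|$, and then run a bootstrap: Lemma~\ref{decay_properties} gives $\|\sqrt{B_H}B_c^py(t)\|=o(t^{m_{HC}-1-p})$, so Lemma~\ref{lem_propagator_simplification} applies; combining it with Lemma~\ref{bilinear_form_minimization} (applied to $z_j:=t^j\sqrt{B_H}B_c^jy(t)$) and Remark~\ref{asymptotics_for_propagator_norm} (which yields $1-\|e^{-B_ct}\|^2=\mathcal{O}(t^{2m_{HC}+1})$) improves this to $\|\sqrt{B_H}B_c^py(t)\|=\mathcal{O}(t^{m_{HC}-p})$ for $0\le p<m_{HC}$; feeding that sharper decay back into Lemma~\ref{lem_propagator_simplification} makes its remainder $\mathcal{O}(t^{2m_{HC}+2})$. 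Finally, $\langle My(t),y(t)\rangle=\sum_{j=0}^{m_{HC}-1}\|\sqrt{B_H}B_c^jy(t)\|^2=\mathcal{O}(t^2)$, and since $\langle Mv,v\rangle\ge\mu\|\Pi v\|^2$ for all $v$ (with $\Pi$ the orthogonal projection onto $V_0^\perp$ and $\mu>0$ the least positive eigenvalue of $M$), the component $\Pi y(t)$ is $\mathcal{O}(t)$; by homogeneity this forces $\|\sqrt{B_H}B_c^{m_{HC}}y(t)\|^2\ge\min_{y\in V_0\cap\SS}\|\sqrt{B_H}B_c^{m_{HC}}y\|^2+\mathcal{O}(t)$, and together with Lemma~\ref{bilinear_form_minimization} (with $z_0:=t^{m_{HC}}\sqrt{B_H}B_c^{m_{HC}}y(t)$) this gives $\|e^{-B_ct}\|^2\le1-2c\,t^{2m_{HC}+1}+\mathcal{O}(t^{2m_{HC}+2})$. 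Matching the two bounds and taking square roots yields \eqref{eq:finite_dim_propagator_norm} with $c$ as in \eqref{c-matrix}. I expect the delicate point to be exactly this bootstrap—getting the decay estimate to self-improve while simultaneously pinning the $V_0^\perp$-component of the maximizer to order $t$—since the operator-case argument of Section~\ref{sec:short-time} only delivered $o(\cdot)$; everything else is bookkeeping parallel to that section.
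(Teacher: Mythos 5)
Your proposal is correct, but it reaches the improved remainder by a genuinely different route than the paper. For the constant, you and the paper do essentially the same thing: the inclusion of the kernel set in each $\delta$-sublevel set gives one inequality, and a minimizing sequence $y_n\in\SS$ with $\Vert\sqrt{B_H}B_c^p y_n\Vert\le 1/n$ together with compactness of the unit sphere gives a limit $y_*$ in the joint kernel and hence the reverse inequality, the infimum being attained by continuity on a compact set. For the remainder term, however, the paper does not redo any of the estimates of Section~\ref{sec:short-time}: it simply invokes \cite[Lemma 1]{KOHAUPT20011}, by which $t\mapsto\Vert e^{-B_ct}\Vert$ is real analytic on some interval $[0,t_0)$ in finite dimensions, so the expansion $1-ct^{a}+o(t^{a})$ already obtained in Theorem~\ref{thm_propagator_norm_asymptotics} upgrades at once to $1-ct^{a}+\mathcal{O}(t^{a+1})$. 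Your alternative is self-contained and quantitative: for the lower bound you may indeed take $y_0(t)\equiv y_*$ with $y_*$ an exact joint-kernel minimizer, whereupon every previously $o(t^{2m_{HC}+1})$ term in Lemma~\ref{lem_propagator_simplification} and Theorem~\ref{operator_norm_lower_bound} vanishes or becomes $\mathcal{O}(t^{2m_{HC}+2})$; for the upper bound your bootstrap works as stated, since Lemma~\ref{bilinear_form_minimization} together with Remark~\ref{asymptotics_for_propagator_norm} turns the $o(t^{m_{HC}-1-p})$ decay of a true maximizer into $\mathcal{O}(t^{m_{HC}-p})$, which (after re-running the proof of Lemma~\ref{lem_propagator_simplification} under this sharper hypothesis, a point you should make explicit since the lemma as stated only gives $o(t^{2m_{HC}+1})$) yields an $\mathcal{O}(t^{2m_{HC}+2})$ remainder, and the spectral-gap estimate $\langle Mv,v\rangle\ge\mu\Vert\Pi v\Vert^2$ pins the component of the maximizer orthogonal to the joint kernel to order $t$, so that $\Vert\sqrt{B_H}B_c^{m_{HC}}y(t)\Vert^2$ is bounded below by the kernel minimum up to $\mathcal{O}(t)$. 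What the paper's route buys is brevity, at the price of an external analyticity result; what your route buys is independence from that citation and an explicit mechanism showing \emph{why} the maximizers concentrate near the joint kernel, at the price of reworking both bounds with sharpened bookkeeping.
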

\begin{proof}
The main part of this equivalence just carries over from Theorem~\ref{thm_propagator_norm_asymptotics}. So we only have to prove the formula for the constant and the order of the remainder term. In the remainder of this proof we abbreviate the HC-index of $B_c$ by $m$.

First, note that due to \cite[Lemma 1]{KOHAUPT20011} there exists $t_0 > 0$ such that the mapping $t \mapsto \Vert e^{-B_c t} \Vert$ is analytic on $[0, t_0)$, which directly implies the improved remainder term in \eqref{eq:finite_dim_propagator_norm}. 

Second, for any $\delta > 0$ we have
\begin{align*}
\inf_{\substack{\Vert \sqrt{B_H} B_c^p y \Vert \leq \delta,\, 0 \leq p < m \\ \Vert y \Vert = 1}} \left\Vert \sqrt{B_H} B_c^m y \right\Vert^2 \leq \inf_{\substack{y \in \ker(\sqrt{B_H} B_c^p),\, 0 \leq p < m \\ \Vert y \Vert = 1}} \left\Vert \sqrt{B_H} B_c^m y \right\Vert^2,
\end{align*}
and thus
\begin{align*}
\lim_{\delta \to 0}\inf_{\substack{\Vert \sqrt{B_H} B_c^p y \Vert \leq \delta,\, 0 \leq p < m \\ \Vert y \Vert = 1}} \left\Vert \sqrt{B_H} B_c^m y \right\Vert^2 \leq \inf_{\substack{y \in \ker(\sqrt{B_H} B_c^p),\, 0 \leq p < m \\ \Vert y \Vert = 1}} \left\Vert \sqrt{B_H} B_c^m y \right\Vert^2,
\end{align*}
relating the two constants in \eqref{c-operator} and \eqref{c-matrix}. 
Concerning the other direction of the estimate, for any $n \in \N$ we choose a sequence $y_n \in \SS$ with $\Vert \sqrt{B_H} B_c^p  y_n \Vert \leq \frac{1}{n}$ for all $0 \leq p < m$ (which exists due to \eqref{eps-kernel}) and with
\begin{align}\label{eq:infimizing_sequence}
\left\Vert \sqrt{B_H} B_c^m y_n \right\Vert^2 \leq \inf_{\substack{\Vert \sqrt{B_H} B_c^p y \Vert \leq \frac{1}{n},\, 0 \leq p < m \\ \Vert y \Vert = 1}} \left\Vert \sqrt{B_H} B_c^m y \right\Vert^2 + \frac{1}{n}.
\end{align}
As $\HH$ is finite-dimensional and $\Vert y_n \Vert = 1$ for all $n \in \N$, we can find a convergent subsequence $y_{n_k}$ with limit $y_* \in \HH$. Then we clearly have $\Vert y_* \Vert = 1$ and $y_* \in \ker(\sqrt{B_H} B_c^p)$ for $0 \leq p < m$. Moreover, taking the limit $n \to \infty$ in \eqref{eq:infimizing_sequence} yields
\begin{align*}
\left\Vert \sqrt{B_H} B_c^m y_* \right\Vert^2 \leq \lim_{n \to \infty} \inf_{\substack{\Vert \sqrt{B_H} B_c^p y \Vert \leq \frac{1}{n},\, 0 \leq p < m \\ \Vert y \Vert = 1}} \left\Vert \sqrt{B_H} B_c^m y \right\Vert^2.
\end{align*}
Consequently,
\begin{align*}
\inf_{\substack{y \in \ker(\sqrt{B_H} B_c^p),\, 0 \leq p < m \\ \Vert y \Vert = 1}} \left\Vert \sqrt{B_H} B_c^m y \right\Vert^2 \leq \lim_{\delta \to 0}\ \inf_{\substack{\Vert \sqrt{B_H} B_c^p y \Vert \leq \delta,\, 0 \leq p < m \\ \Vert y \Vert = 1}} \left\Vert \sqrt{B_H} B_c^m y \right\Vert^2.
\end{align*}
Finally, we note that the infimum can be replaced by a minimum because we are taking the infimum of a continuous function over a compact set.
\end{proof}

\begin{remark}\label{rem:m=1}{\rm
We  now illustrate for the case $m_{HC}=1$ why the leading decay term in the expansion of the propagator norm has (at least) order 3: 
For matrices, the index $m_{HC}=1$ implies $\dim(\ker(B_H))\ge1$. Any solution to \eqref{linodecc} with normalized $x^0\in \ker(B_H)$ satisfies $\frac{d}{dt} \|x(t)\| \big|_{t=0} =0$. The trajectory $x(t)$ and its norm are analytic on $\R$ (also for $t<0$~!). The Taylor expansion of the norm of this trajectory cannot be of the form $\|x(t)\|=1-ct^2+\calO(t^3)$ with some $c>0$, as this would violate the semi-dissipativity for $t<0$. Thus, this trajectory norm (and hence also the propagator norm) must be of the form $1-ct^3+\calO(t^4)$ with some $c\ge0$. Note that this argument cannot be applied directly to the propagator norm, as it is in general not real analytic in a neighborhood of $t=0$, see \cite{Graf,KOHAUPT20011}.
}
\end{remark}

\section{Equivalence of hypocoercivity index and hypocontractivity index via the Cayley transform}
\label{sec:Cayley}

In this section we extend a result that was obtained for finite dimensional systems in \cite{AchAM23ELA} to the case of infinite dimensional systems. Namely, the hypocoercivity index of a continuous-time system coincides with the hypocontractivity index of the discrete-time system obtained by applying the implicit midpoint rule as time-discretization method. 

\subsection{Continuous-time systems}
We first recall the results for continuous-time dynamical systems in Hilbert spaces. 
Let $\HH$ be a separable Hilbert space, $B_c \in \B(\HH)$ with $\ker B_c=\{0\}$, and $x^0 \in \HH$ be an initial datum. 
Consider the continuous-time dynamical system~\eqref{linodecc}.
%
%
Then one has the following classical result about exponential stability, see e.g. Theorem I. 3.14, \cite{engel_nagel}.
\begin{theorem}\label{ct_exponentially_stable}
The zero solution $x\equiv 0$ of \eqref{linodecc} is exponentially stable if and only if $\Lambda(-B_c) \subset \HHH$.
\end{theorem}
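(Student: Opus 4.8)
The plan is to pass from the semigroup $(e^{-B_c t})_{t\ge0}$ (note that, since $B_c\in\B(\HH)$, the unique solution of \eqref{linodecc} is $x(t)=e^{-B_c t}x^0$ and the semigroup is uniformly continuous) to the spectrum of its generator via the spectral mapping theorem, which is available precisely because $B_c$ is bounded. Throughout write $s(-B_c):=\sup\{\Re\nu:\nu\in\Lambda(-B_c)\}$ for the spectral bound; since $\Lambda(-B_c)$ is compact, this supremum is finite and attained, and the condition $\Lambda(-B_c)\subset\HHH$ is equivalent to $s(-B_c)<0$ (strictly, thanks to compactness).

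First I would reduce uniform exponential stability to a single-time statement: the zero solution is exponentially stable, i.e.\ $\|e^{-B_c t}\|\le Ce^{-\lambda t}$ for all $t\ge0$ with some $C\ge1$, $\lambda>0$, if and only if $\|e^{-B_c t_0}\|<1$ for some $t_0>0$. The forward direction is immediate (choose $t_0$ with $Ce^{-\lambda t_0}<1$). For the converse, split $t=nt_0+r$ with $0\le r<t_0$ and use $e^{-B_c t}=e^{-B_c r}(e^{-B_c t_0})^n$ together with submultiplicativity of the operator norm and $M_0:=\sup_{0\le r\le t_0}\|e^{-B_c r}\|<\infty$; since $\|e^{-B_c t_0}\|<1$ and $n\ge t/t_0-1$, this gives $\|e^{-B_c t}\|\le M_0\,\|e^{-B_c t_0}\|^{-1}\bigl(\|e^{-B_c t_0}\|^{1/t_0}\bigr)^t$, which is of the required form.

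Next I would relate $\|e^{-B_c t_0}\|<1$ to the spectral radius $\rho(e^{-B_c t_0})$. One direction is trivial from $\rho\le\|\cdot\|$. Conversely, if $\rho(e^{-B_c t_0})<1$, then by Gelfand's formula $\|(e^{-B_c t_0})^n\|^{1/n}\to\rho(e^{-B_c t_0})<1$, so $\|e^{-B_c n_0t_0}\|<1$ for some $n_0\in\N$; replacing $t_0$ by $n_0t_0$ recovers a single-time contraction. Hence exponential stability is equivalent to the existence of $t_0>0$ with $\rho(e^{-B_c t_0})<1$. Finally, the holomorphic functional calculus for the bounded operator $-B_c$ yields the spectral mapping identity $\Lambda(e^{-B_c t_0})=\{e^{t_0\nu}:\nu\in\Lambda(-B_c)\}$, whence $\rho(e^{-B_c t_0})=e^{t_0\,s(-B_c)}$, so $\rho(e^{-B_c t_0})<1\iff s(-B_c)<0\iff\Lambda(-B_c)\subset\HHH$. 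Since the last condition does not depend on $t_0$, both implications close.

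The submultiplicativity estimate and Gelfand's formula are routine; the one genuinely structural ingredient is the spectral mapping theorem for a bounded generator, which also supplies compactness of $\Lambda(-B_c)$ and hence the strictness $s(-B_c)<0$. This is where the argument would fail for a general $C_0$-semigroup, so I would not attempt a purely semigroup-theoretic shortcut; alternatively one may simply invoke \cite[Theorem I.3.14]{engel_nagel}, or derive the bound $\|e^{-B_c t}\|\le Me^{-\lambda t}$ directly from the resolvent representation $e^{-B_c t}=\frac{1}{2\pi i}\int_\Gamma e^{-t\zeta}(\zeta-B_c)^{-1}\,d\zeta$ over a closed contour $\Gamma$ separating the compact set $\Lambda(B_c)$ from $\{\Re\zeta\le 0\}$. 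I expect the spectral-radius route sketched above to be the most economical.
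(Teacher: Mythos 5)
Your argument is correct. Note, however, that the paper does not prove Theorem~\ref{ct_exponentially_stable} at all: it simply cites the classical result \cite[Theorem I.3.14]{engel_nagel}, so there is no in-paper proof to compare against step by step. What you have written is a self-contained proof of that cited fact for a bounded generator, and each step checks out: the reduction of uniform exponential stability to a single-time contraction $\|e^{-B_c t_0}\|<1$ (the splitting $t=nt_0+r$ and submultiplicativity argument is fine, and $e^{-B_ct_0}$ is invertible, so no degenerate case), the passage from norm to spectral radius via Gelfand's formula together with the semigroup identity $(e^{-B_ct_0})^n=e^{-B_cnt_0}$, and the spectral mapping identity $\Lambda(e^{-B_ct_0})=\{e^{t_0\nu}:\nu\in\Lambda(-B_c)\}$, which for $B_c\in\B(\HH)$ is exactly the Dunford--Schwartz functional-calculus result the paper itself invokes elsewhere (in the proof of Lemma~\ref{lem:Cayley}); compactness of the spectrum then gives the strict inequality $s(-B_c)<0$ precisely when $\Lambda(-B_c)\subset\HHH$. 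Your route is also structurally parallel to how the paper (in a commented-out passage) treats the discrete-time analogue, Theorem~\ref{dt_exponentially_stable}, via the spectral radius formula, so it fits the paper's toolkit; the only difference from the paper is that the authors outsource the statement to a reference, whereas you prove it, correctly flagging that boundedness of $B_c$ is the ingredient that makes the spectral mapping (and hence the equivalence) work, in contrast to general $C_0$-semigroups.
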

%

In Lemma 2 of \cite{AchAM25} the following result was shown.
\begin{lemma}
\label{hypocoercivity_equivalent_conditions}
Let $-B_c=B_S-B_H\in \mathcal B(\HH)$, with $B_S=-B_S^*$ and $B_H=B_H^*$, be semi-dissipative, i.e., it satisfies $B_H\geq 0$. Then the following conditions are equivalent:
\begin{enumerate}
\item There exists $m \in \N_0$ such that
\begin{align*}
\Span \left( \bigcup_{j=0}^m \im \left( (B^*_c)^j \sqrt{B_H} \right) \right) = \HH.
\end{align*}
\item There exists $m \in \N_0$ such that
\begin{align*}
\bigcap_{j=0}^m \ker \left( \sqrt{B_H} B_c^j \right) = \{ 0 \} \ \ \text{and} \ \ \Span \left( \bigcup_{j=0}^m \im \left( (B^*_c)^j \sqrt{B_H} \right) \right) \text{ is closed}.
\end{align*}
\item There exists $m \in \N_0$ such that
\begin{align*}
\sum_{j=0}^m (B_c^*)^j \,B_H\, B_c^j \geq \kappa I
\end{align*}
for some $\kappa > 0$.
\end{enumerate}
Moreover, if any of these conditions is satisfied for some $m \in \N_0$, then all conditions are satisfied for this $m$.
\end{lemma}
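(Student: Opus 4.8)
The goal is to prove the equivalence of three conditions for a semi-dissipative operator $-B_c = B_S - B_H \in \mathcal{B}(\HH)$: a spanning condition (i), a spanning-plus-closedness condition (ii), and a coercivity condition (iii).

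My plan is to prove the chain $(iii) \Rightarrow (i) \Rightarrow (ii) \Rightarrow (iii)$, which keeps the same index $m$ throughout.

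For $(iii) \Rightarrow (i)$: Suppose $\sum_{j=0}^m (B_c^*)^j B_H B_c^j \geq \kappa I$. Write $B_H = \sqrt{B_H}\,\sqrt{B_H}$, so the operator equals $\sum_{j=0}^m (B_c^*)^j \sqrt{B_H}\,\sqrt{B_H}\, B_c^j = T^*T$ where $T := [\sqrt{B_H};\ \sqrt{B_H}B_c;\ \dots;\ \sqrt{B_H}B_c^m]^T \colon \HH \to \HH^{m+1}$ (stack the operators). Then $T^*T \geq \kappa I$ means $\|Tx\|^2 \geq \kappa\|x\|^2$, so $T$ is bounded below, hence injective with closed range. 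A standard functional-analytic fact: for a bounded operator $T$ that is bounded below, $\overline{\im T^*} = (\ker T)^\perp = \HH$, and in fact $\im T^* = \HH$ (surjectivity) since $T^*$ has closed range too. Unwinding, $\im T^* = \sum_{j=0}^m \im\big((B_c^*)^j \sqrt{B_H}\big)$ as a (not a priori closed) sum of subspaces; being equal to $\im T^*$ which is all of $\HH$, condition (i) follows. (One must be slightly careful: $\im T^*$ is literally the sum of the images, since $T^*(v_0,\dots,v_m) = \sum_j (B_c^*)^j\sqrt{B_H}\, v_j$.)

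For $(i) \Rightarrow (ii)$: If $\Span\big(\bigcup_{j=0}^m \im((B_c^*)^j\sqrt{B_H})\big) = \HH$, this span is trivially closed (it is $\HH$). For the kernel condition, note $\bigcap_{j=0}^m \ker(\sqrt{B_H}B_c^j)$ is the annihilator/orthogonal complement of $\sum_{j=0}^m \im((B_c^*)^j \sqrt{B_H})$: indeed $x \perp \im((B_c^*)^j\sqrt{B_H})$ for all $j$ iff $\langle x, (B_c^*)^j\sqrt{B_H}\, v\rangle = 0$ for all $v$ and all $j$ iff $\sqrt{B_H}B_c^j x = 0$ for all $j$. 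Since the span is dense (being all of $\HH$), its orthogonal complement is $\{0\}$, giving (ii).

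For $(ii) \Rightarrow (iii)$: Here is the main obstacle. We know $\bigcap_{j=0}^m \ker(\sqrt{B_H}B_c^j) = \{0\}$, equivalently $T$ (the stacked operator above) is injective, and $\im T^* = \Span(\bigcup \im((B_c^*)^j\sqrt{B_H}))$ is closed. From closedness of $\im T^*$ we get closedness of $\im T$ (closed range theorem), and $\overline{\im T^*} = (\ker T)^\perp = \HH$ means $\im T^*$ is dense; combined with closed, $\im T^* = \HH$, so $T^*$ is surjective. By the closed range / open mapping theorem, $T^*$ surjective implies $T$ bounded below: $\|Tx\| \geq c\|x\|$ for some $c>0$. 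Squaring gives $T^*T = \sum_{j=0}^m (B_c^*)^j B_H B_c^j \geq c^2 I$, which is (iii) with $\kappa = c^2$. The delicate point is invoking the right functional-analysis packaging (closed range theorem for bounded operators between Hilbert spaces, plus the equivalence "T bounded below $\iff$ T injective with closed range $\iff$ $T^*$ surjective"); I would cite a standard reference for these. The "moreover" claim — that all conditions hold with the same $m$ — is immediate from the proof since no step changes $m$.

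\begin{proof}
Fix $m \in \N_0$ and define the bounded operator
\begin{align*}
T_m \colon \HH \to \HH^{m+1}, \qquad T_m x := \big( \sqrt{B_H}\, x,\ \sqrt{B_H} B_c x,\ \dots,\ \sqrt{B_H} B_c^m x \big),
\end{align*}
where $\HH^{m+1}$ carries the natural Hilbert space structure. Its adjoint is
\begin{align*}
T_m^* \colon \HH^{m+1} \to \HH, \qquad T_m^*(v_0, \dots, v_m) = \sum_{j=0}^m (B_c^*)^j \sqrt{B_H}\, v_j,
\end{align*}
so that $\im T_m^* = \sum_{j=0}^m \im\big((B_c^*)^j \sqrt{B_H}\big) = \Span\big(\bigcup_{j=0}^m \im((B_c^*)^j \sqrt{B_H})\big)$ and, since $\sqrt{B_H}$ is self-adjoint, $\ker T_m = \bigcap_{j=0}^m \ker(\sqrt{B_H} B_c^j)$. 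Moreover, because $B_H = \sqrt{B_H}\sqrt{B_H}$,
\begin{align}\label{eq:TstarT}
T_m^* T_m = \sum_{j=0}^m (B_c^*)^j B_H B_c^j.
\end{align}
We always have the general relations $(\ker T_m)^\perp = \overline{\im T_m^*}$ and, by the closed range theorem for bounded operators on Hilbert spaces, $\im T_m$ is closed if and only if $\im T_m^*$ is closed.

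We prove $(iii) \Rightarrow (i) \Rightarrow (ii) \Rightarrow (iii)$ for this fixed $m$; since no implication changes $m$, this also establishes the final ``moreover'' assertion.

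$(iii) \Rightarrow (i)$: If $T_m^* T_m \geq \kappa I$ for some $\kappa > 0$, then by \eqref{eq:TstarT} we have $\|T_m x\|^2 = \langle T_m^* T_m x, x\rangle \geq \kappa \|x\|^2$, i.e., $T_m$ is bounded below. Hence $T_m$ is injective with closed range, so by the closed range theorem $T_m^*$ has closed range and $\im T_m^* = \overline{\im T_m^*} = (\ker T_m)^\perp = \HH$. Unwinding the identification of $\im T_m^*$ gives condition (i).

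$(i) \Rightarrow (ii)$: If $\Span\big(\bigcup_{j=0}^m \im((B_c^*)^j\sqrt{B_H})\big) = \im T_m^* = \HH$, then this span is trivially closed, and $\bigcap_{j=0}^m \ker(\sqrt{B_H}B_c^j) = \ker T_m = (\overline{\im T_m^*})^\perp = \HH^\perp = \{0\}$, which is condition (ii).

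$(ii) \Rightarrow (iii)$: Assume $\ker T_m = \{0\}$ and $\im T_m^*$ is closed. Then $\im T_m^* = \overline{\im T_m^*} = (\ker T_m)^\perp = \HH$, so $T_m^*$ is surjective. A surjective bounded operator between Hilbert spaces is open (open mapping theorem), which forces its adjoint $T_m$ to be bounded below: there is $c > 0$ with $\|T_m x\| \geq c\|x\|$ for all $x \in \HH$. Squaring and using \eqref{eq:TstarT} yields $\sum_{j=0}^m (B_c^*)^j B_H B_c^j = T_m^* T_m \geq c^2 I$, which is condition (iii) with $\kappa = c^2$.
\end{proof}
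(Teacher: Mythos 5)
Your proof is correct. Note, however, that the paper does not actually prove this lemma at all: it is quoted verbatim as Lemma~2 of the cited reference \cite{AchAM25} ("In Lemma 2 of \cite{AchAM25} the following result was shown"), so there is no in-paper argument to compare against line by line. Your stacked-operator argument is a clean, self-contained substitute: introducing $T_m x = (\sqrt{B_H}x, \sqrt{B_H}B_c x,\dots,\sqrt{B_H}B_c^m x)$ with $T_m^*(v_0,\dots,v_m)=\sum_{j=0}^m (B_c^*)^j\sqrt{B_H}\,v_j$ correctly identifies $\ker T_m=\bigcap_j \ker(\sqrt{B_H}B_c^j)$, $\im T_m^*=\Span\bigl(\bigcup_j \im((B_c^*)^j\sqrt{B_H})\bigr)$ (the span of a union of subspaces is their algebraic sum), and $T_m^*T_m=\sum_j (B_c^*)^jB_HB_c^j$, after which the cycle $(iii)\Rightarrow(i)\Rightarrow(ii)\Rightarrow(iii)$ follows from the standard Hilbert-space facts $(\ker T)^\perp=\overline{\im T^*}$, the closed range theorem, and the equivalence of $T$ bounded below with $T^*$ surjective. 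Each implication keeps the same fixed $m\in\N_0$, so the "moreover" clause is indeed automatic. One small presentational point: it is worth saying explicitly that semi-dissipativity ($B_H\ge 0$) is what makes $\sqrt{B_H}$ well defined, since that is the only place the hypothesis enters.
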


After recalling the results for the continuous-time case, in the next subsection we derive the corresponding results in the discrete-time case.
\subsection{Discrete-time dynamical systems}
Consider the discrete-time dynamical system~\eqref{linodedd} with $B_d\in\B(\HH)$,  $1\not\in \Lambda(B_d)$ (in analogy to $0\not\in\Lambda(B_c)$), and an initial datum $x^0 \in \HH$.
The system is called \emph{exponentially stable} (or, equivalently, $B_d$ is called \emph{hypocontractive}) if there exist constants $C\ge1$ and $0<\lambda<1 $ such that for all $x^0$ and all $k\in \mathbb N_0$ one has
\[
\| x_k\|\leq C \lambda^k \|x^0\|. 
\]
%
One has the following classical result about
exponential stability (see e.g Proposition II. 1.3 in \cite{eisner_operator_semigroups} after noting that $\Lambda(B_d) \subset \D$ is equivalent to $\rho(B_d) < 1$).
\begin{theorem}\label{dt_exponentially_stable}
The zero solution of $\eqref{linodedd}$ (i.e. $x_k=0$ for $k\in \N_0$)  is exponentially stable if and only if $\Lambda(B_d) \subset \D$.
\end{theorem}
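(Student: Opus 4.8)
The statement to prove is Theorem~\ref{dt_exponentially_stable}: the zero solution of the discrete iteration $x_{k+1}=B_dx_k$ is exponentially stable if and only if $\Lambda(B_d)\subset\D$, with $B_d\in\B(\HH)$. This is the discrete analogue of Theorem~\ref{ct_exponentially_stable}, and the excerpt already tells us the intended route: invoke a classical operator-semigroup (here: operator-iteration) result, after reducing the spectral condition $\Lambda(B_d)\subset\D$ to the statement $\rho(B_d)<1$ about the spectral radius.

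The plan is as follows. First I would observe that exponential stability of the iteration is, by definition, the existence of constants $C\ge1$ and $0<\lambda<1$ with $\|B_d^k\|\le C\lambda^k$ for all $k\in\N_0$; in particular this forces $\|B_d^k\|\to0$, i.e.\ $B_d$ is \emph{power-bounded and power-convergent to zero}. Conversely, if $\|B_d^k\|\to0$ then $\|B_d^{k_0}\|<1$ for some $k_0$, and writing an arbitrary $k=qk_0+r$ with $0\le r<k_0$ gives $\|B_d^k\|\le\|B_d^{k_0}\|^q\max_{0\le r<k_0}\|B_d^r\|$, which is exponentially decaying; hence exponential stability is \emph{equivalent} to $\|B_d^k\|\to0$. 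Second, I would connect this to the spectral radius via the spectral radius (Gelfand) formula $\rho(B_d)=\lim_{k\to\infty}\|B_d^k\|^{1/k}$: if $\rho(B_d)<1$, pick $\mu$ with $\rho(B_d)<\mu<1$; then $\|B_d^k\|^{1/k}\le\mu$ for all large $k$, so $\|B_d^k\|\le\mu^k\to0$, giving exponential stability. For the reverse direction, if $\|B_d^k\|\to0$ then $\rho(B_d)=\lim\|B_d^k\|^{1/k}\le\limsup\|B_d^k\|^{1/k}\le 1$, and in fact any $\lambda\in\Lambda(B_d)$ satisfies $|\lambda|^k=|\lambda^k|\le\rho(B_d^k)\le\|B_d^k\|\to0$, forcing $|\lambda|<1$; thus $\rho(B_d)<1$, equivalently $\Lambda(B_d)\subset\D$. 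Third, and alternatively, one may simply cite the classical result (Proposition~II.1.3 in \cite{eisner_operator_semigroups}, as the excerpt indicates) that power-stability of a bounded operator is equivalent to $\rho(B_d)<1$, after the elementary observation that $\Lambda(B_d)\subset\D$ is the same as $\rho(B_d)<1$.

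I would present the proof essentially in this compact form: state that $\Lambda(B_d)\subset\D\iff\rho(B_d)<1$ (immediate from the definition of the spectral radius as $\sup\{|\lambda|:\lambda\in\Lambda(B_d)\}$ together with compactness of the spectrum of a bounded operator), then cite the referenced classical proposition for the equivalence of $\rho(B_d)<1$ with power-stability, and finally note that power-stability is by definition exponential stability of the iteration; or, if a self-contained argument is preferred, include the two short paragraphs above built on the Gelfand formula.

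There is no real obstacle here — the result is classical and the excerpt signals that it is being quoted. The only point requiring a line of care is the passage from $\|B_d^{k_0}\|<1$ at a single power $k_0$ to a uniform geometric bound $\|B_d^k\|\le C\lambda^k$ at all $k$, which is handled by the Euclidean-division argument above; and making explicit that $\Lambda(B_d)\subset\D$ and $\rho(B_d)<1$ are literally the same condition, so that the cited proposition applies verbatim.
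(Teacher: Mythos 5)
Your proposal is correct and follows essentially the same route as the paper, which simply notes that $\Lambda(B_d)\subset\D$ is equivalent to $\rho(B_d)<1$ and then cites the classical power-stability criterion (Proposition~II.1.3 in the operator-semigroups reference); your optional self-contained argument via the Gelfand spectral radius formula matches the standard proof of that cited result, so nothing further is needed.
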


One has the following result which follows directly from Lemma 2 in \cite{AchAM25}
by setting there $R \coloneqq \Id - B_d^* B_d$ and $J \coloneqq B_d^*$. 
The assumption that $J$ has to be skew-adjoint can be removed without any adjustments to the proof.

\begin{lemma}\label{hypocontractivity_equivalent_conditions}
Let $B_d\in \B(\HH)$ be semi-contractive. Then the following conditions are equivalent:
\begin{enumerate}
\item There exists $m \in \N_0$ such that
\begin{align*}
\Span \left( \bigcup_{j=0}^m \im \left( (B_d^*)^j \sqrt{\Id - B_d^* B_d} \right) \right) = \HH.
\end{align*}
\item There exists $m \in \N_0$ such that
\begin{align*}
\bigcap_{j=0}^m \ker \left( \sqrt{\Id - B_d^* B_d} B_d^j \right) = \{ 0 \} \ \ \text{and} \ \ \Span \left( \bigcup_{j=0}^m \im \left( (B_d^*)^j \sqrt{\Id - B_d^* B_d} \right) \right) \text{ is closed}.
\end{align*}
\item There exists $m \in \N_0$ such that
\begin{align}\label{eq:hypocontractivity_condition}
\sum_{j=0}^m (B_d^*)^j (\Id - B_d^* B_d) B_d^j \geq \kappa I
\end{align}
for some $\kappa > 0$.
\end{enumerate}
Moreover, if any of these conditions is satisfied for some $m \in \N_0$, then all conditions are satisfied for this $m$.
\end{lemma}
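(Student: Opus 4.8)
The statement is the discrete-time mirror of Lemma~\ref{hypocoercivity_equivalent_conditions}, and the plan is to obtain it by specializing that lemma (i.e.\ Lemma~2 of \cite{AchAM25}) under the dictionary $B_H \mapsto R \coloneqq \Id - B_d^*B_d$ and $B_c \mapsto B_d$ (so that $B_c^* \mapsto B_d^*$). The first step is to check that the hypotheses transfer: $R=R^*$ by construction, and $R\ge 0$ holds precisely because $B_d$ is semi-contractive, since $\langle Rx,x\rangle = \|x\|^2 - \|B_d x\|^2$ for every $x\in\HH$, which is nonnegative for all $x$ exactly when $\|B_d\|\le 1$. The only hypothesis of Lemma~2 that is \emph{not} automatically inherited is the skew-adjointness imposed there on the ``$J$-part'' of the operator, so the substantive content of the proof is to confirm that this hypothesis is never actually used, i.e.\ that Lemma~2 holds verbatim with the skew-adjoint $J$ replaced by the generic bounded operator $B_d^*$.

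I would make that confirmation by isolating the (purely functional-analytic) engine behind Lemma~2. For fixed $m\in\N_0$ introduce the bounded ``observability'' operator
\[
  T_m \colon \HH \to \bigoplus_{j=0}^m \HH, \qquad T_m x = \bigl(\sqrt{R}\,x,\ \sqrt{R}\,B_d x,\ \dots,\ \sqrt{R}\,B_d^m x\bigr),
\]
whose adjoint is $T_m^*(v_0,\dots,v_m) = \sum_{j=0}^m (B_d^*)^j \sqrt{R}\, v_j$ and which satisfies $T_m^* T_m = \sum_{j=0}^m (B_d^*)^j (\Id - B_d^*B_d) B_d^j$. Reading off kernel and range gives $\ker T_m = \bigcap_{j=0}^m \ker(\sqrt{R}\,B_d^j)$ and $\range(T_m^*) = \Span\bigl(\bigcup_{j=0}^m \im((B_d^*)^j\sqrt{R})\bigr)$, with $\overline{\range(T_m^*)} = (\ker T_m)^\perp$. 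Then the three equivalences run as follows. Condition (iii) for this $m$ says $T_m^*T_m \ge \kappa I$, equivalently $T_m$ is bounded below, equivalently $T_m$ is injective with closed range; by the standard duality ``$T_m$ bounded below $\iff T_m^*$ surjective'' this is exactly $\range(T_m^*)=\HH$, which is condition (i); and splitting $\range(T_m^*)=\HH$ into ``dense'' (i.e.\ $\ker T_m=\{0\}$, via $\overline{\range(T_m^*)}=(\ker T_m)^\perp$) together with ``closed'' (equivalently $\range(T_m)$ closed, by the closed range theorem) gives condition (ii). Every implication here uses only $B_d\in\B(\HH)$ and $R=R^*\ge 0$; no algebraic relation between $B_d$ and $R$ is ever invoked, which is precisely why the skew-adjointness assumption of Lemma~2 is vacuous in this setting. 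The final clause, that if one of the conditions holds for some $m$ then all three hold for that same $m$, is immediate since all three have been identified with the single statement ``$T_m^*T_m \ge \kappa I$ for some $\kappa>0$''.

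The main obstacle is not conceptual but a matter of bookkeeping: one must apply the operator-theoretic equivalences with the correct adjoint identifications (in particular ``bounded below $\iff$ adjoint surjective'' and the closed range theorem applied to $\range T_m$ versus $\range T_m^*$), and one must correctly match $\range(T_m^*)$ with the span $\Span\bigl(\bigcup_{j=0}^m \im((B_d^*)^j\sqrt{R})\bigr)$ as well as its closedness, exactly as in the continuous-time argument. Since this reproduces the proof of Lemma~2 of \cite{AchAM25} with the skew-adjoint $J$ replaced by $B_d^*$, once these identifications are recorded the remaining verifications are routine and the lemma follows; alternatively one may simply cite Lemma~\ref{hypocoercivity_equivalent_conditions} directly and remark that an inspection of its proof shows the skew-adjointness of $J$ to be used only notationally.
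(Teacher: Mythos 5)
Your proposal is correct and takes essentially the same route as the paper: the paper obtains the lemma precisely by citing Lemma~2 of \cite{AchAM25} (i.e.\ Lemma~\ref{hypocoercivity_equivalent_conditions}) with the substitutions $R \coloneqq \Id - B_d^*B_d$ and $J \coloneqq B_d^*$, remarking only that the skew-adjointness assumption on $J$ can be removed without any adjustment to that proof. Your observability-operator argument via $T_m$ (with $T_m^*T_m=\sum_{j=0}^m (B_d^*)^j(\Id-B_d^*B_d)B_d^j$ and the standard equivalences ``bounded below $\iff$ adjoint surjective'' plus the closed range theorem) is a correct, self-contained justification of exactly that remark, which the paper leaves to an inspection of the cited proof.
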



The following theorem then is the operator analog of Theorem 40 in \cite{AchAM23ELA}.

\begin{theorem}\label{Th:hypocontr}
Let $B_d\in \B(\HH)$ be semi-contractive and $m \in \N_0$. Then the following are equivalent:
\begin{enumerate}
\item $B_d$ has hypocontractivity index $m$.
\item $\Vert B_d^j \Vert = 1$ for $1 \leq j \leq m$ and $\Vert B_d^{m+1} \Vert < 1$.
\end{enumerate}
\end{theorem}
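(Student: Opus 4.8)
The plan is to prove the equivalence by characterizing the hypocontractivity index $m_{dHC}$ in terms of the powers $\Vert B_d^j\Vert$, exploiting the structural parallel with Definition~\ref{def:mHCd} and Lemma~\ref{hypocontractivity_equivalent_conditions}. First I would unfold the key identity: for any $k\in\N$,
\begin{align*}
\Vert x\Vert^2 - \Vert B_d^k x\Vert^2 = \sum_{j=0}^{k-1} \left( \Vert B_d^j x\Vert^2 - \Vert B_d^{j+1}x\Vert^2 \right) = \left\langle \sum_{j=0}^{k-1}(B_d^*)^j(\Id - B_d^*B_d)B_d^j\, x,\, x \right\rangle,
\end{align*}
which rewrites $\Vert B_d^k\Vert$ in terms of exactly the operator appearing in \eqref{def:mHCd}. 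Thus $\Vert B_d^{k}\Vert\le 1$ always holds (semi-contractivity), and $\Vert B_d^{k}\Vert<1$ is equivalent to $\sum_{j=0}^{k-1}(B_d^*)^j(\Id - B_d^*B_d)B_d^j \ge \kappa I$ for some $\kappa>0$; equivalently, $\Vert B_d^k\Vert=1$ means this partial sum fails to be coercive. Note that here the relevant sum runs to $k-1$, so $\Vert B_d^{m+1}\Vert<1$ corresponds to the coercivity condition \eqref{def:mHCd} with upper index $m$.

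Next I would assemble the two directions. For (i)$\Rightarrow$(ii): if $m_{dHC}=m$, then \eqref{def:mHCd} holds with upper index $m$ (so $\Vert B_d^{m+1}\Vert<1$ by the identity above), and by minimality of $m$ the partial sum $\sum_{j=0}^{m-1}(B_d^*)^j(\Id-B_d^*B_d)B_d^j$ is \emph{not} coercive, which by the identity gives $\Vert B_d^m\Vert=1$. Monotonicity of $j\mapsto\Vert B_d^j\Vert$ (since $\Vert B_d^{j}x\Vert = \Vert B_d^{j-1}(B_d x)\Vert \le \Vert B_d^{j-1}\Vert\,\Vert x\Vert$, using $\Vert B_d\Vert\le1$, combined with the lower bound $\Vert B_d^{j+1}\Vert\le\Vert B_d^j\Vert$... actually one argues $\Vert B_d^{j}\Vert\ge\Vert B_d^{j+1}\Vert$ directly) then forces $\Vert B_d^j\Vert=1$ for all $1\le j\le m$. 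For (ii)$\Rightarrow$(i): $\Vert B_d^{m+1}\Vert<1$ gives coercivity of the sum with upper index $m$, so the dHC-index exists and is $\le m$; and $\Vert B_d^m\Vert=1$ means the sum with upper index $m-1$ is not coercive, so the dHC-index is $>m-1$, hence $=m$. One must handle the edge case $m=0$ separately (then condition (ii) is just $\Vert B_d\Vert<1$, matching $m_{dHC}=0$, i.e.\ $\Id - B_d^*B_d\ge\kappa I$).

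The main subtlety, rather than a deep obstacle, is making the coercivity/non-coercivity dichotomy precise in the infinite-dimensional setting: $\Vert B_d^k\Vert=1$ does \emph{not} say there is a unit vector $x$ with $\Vert B_d^k x\Vert=1$, only a maximizing sequence; correspondingly "$\sum_{j=0}^{k-1}(B_d^*)^j(\Id-B_d^*B_d)B_d^j$ is not coercive" must be read as: for every $\kappa>0$ the estimate fails, equivalently $\inf_{\Vert x\Vert=1}\langle(\cdots)x,x\rangle=0$. This is exactly the form of the negation of \eqref{def:mHCd}, so the identity above transfers it cleanly to $\inf_{\Vert x\Vert=1}(\Vert x\Vert^2-\Vert B_d^k x\Vert^2)=0$, i.e.\ $\Vert B_d^k\Vert=1$. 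No compactness is needed. I would also remark that an alternative, spectral-flavoured route is available — combining Lemma~\ref{hypocontractivity_equivalent_conditions} with the decay characterization — but the direct computation via the telescoping identity is the cleanest and mirrors the continuous-time arguments of Section~\ref{sec:short-time}.
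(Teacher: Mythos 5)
Your proposal is correct and follows essentially the same route as the paper: the telescoping identity $\bigl\langle \sum_{j=0}^{k-1}(B_d^*)^j(\Id-B_d^*B_d)B_d^j\,x,x\bigr\rangle=\Vert x\Vert^2-\Vert B_d^{k}x\Vert^2$, which converts coercivity of the partial sum with upper index $k-1$ into $\Vert B_d^{k}\Vert<1$ (the paper phrases this as the equivalence $\sum_{j=0}^{n}(B_d^*)^j(\Id-B_d^*B_d)B_d^j\ge\kappa\Id \iff \sqrt{1-\kappa}\ge\Vert B_d^{n+1}\Vert$), after which minimality of the index and semi-contractivity give the statement. Your explicit treatment of the non-coercivity/maximizing-sequence point and of the monotonicity of $j\mapsto\Vert B_d^j\Vert$ only spells out what the paper leaves implicit.
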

\begin{proof}
We first note that for $x \in \HH$ and $n \in \N_0$ we have
\begin{align*}
\left\langle \left( \sum_{j=0}^n (B_d^*)^j (\Id - B_d^*B_d) B_d^j \right) x, x \right\rangle = \left\langle \left( \Id - (B_d^*)^{n+1} B_d^{n+1} \right) x, x \right\rangle = \Vert x \Vert^2 - \Vert B_d^{n+1} x \Vert^2.
\end{align*}
The statement now follows immediately from the hypocontractivity condition \eqref{eq:hypocontractivity_condition} and the following equivalences for any $0 \leq \kappa < 1$:
\begin{align*}
&\sum_{j=0}^n (B_d^*)^j (\Id - B_d^*B_d) B_d^j \geq \kappa \Id \\
\iff &\mbox{\rm for all } x \in \HH: \left\langle \left( \sum_{j=0}^n (B_d^*)^j (\Id - B_d^*B_d) B_d^j \right) x, x \right\rangle \geq \kappa \Vert x \Vert^2 \\
\iff &\mbox{\rm for all } x \in \HH: \Vert x \Vert^2 - \Vert B_d^{n+1} x \Vert^2 \geq \kappa \Vert x \Vert^2 \\
\iff &\sqrt{1-\kappa} \geq \Vert B_d^{n+1} \Vert. \qedhere
\end{align*}
\end{proof}
We are now prepared to show the equivalence of the hypocoercivity index and the hypocontractivity index
when the Cayley transform is used for the mapping between the two concepts. 

\subsection{Relation between hypocoercivity and hypocontractivity via the Cayley transform}
In this subsection we recall the classical (scaled) Cayley transform (which corresponds to a discretization with the implicit midpoint rule) and show the equivalence of the hypocoercivity index and the hypocontractivity index under this map.

\begin{definition}
Let $\tau > 0$. The \emph{scaled Cayley transform} is the map
\begin{align}\label{scaledcayley}
	M_\tau \colon \C \setminus \left\{ \frac{2}{\tau} \right\} & \longrightarrow \C \\
	z & \longmapsto \frac{1+\frac{\tau}{2}z}{1-\frac{\tau}{2}z}.
\end{align}
For $\tau=2$, $M = M_2$ is called \emph{Cayley transform}.
\end{definition}

The scaled Cayley transform has the following well-known properties, see e.g. \cite{GreK06}.
\begin{lemma}
Let $\tau > 0$. The scaled Cayley transform \eqref{scaledcayley}
%
is a bi-holomorphism with inverse given by
\begin{align*}
	M_\tau^{-1} \colon \C \setminus \{ -1 \} & \longrightarrow \C \setminus \left\{ \frac{2}{\tau} \right\} \\
	z & \longmapsto \frac{2(z-1)}{\tau(z+1)}.
\end{align*}
Moreover, $M_\tau(\HHH) = \D$ and $M_\tau^{-1}(\D) = \HHH$.
\end{lemma}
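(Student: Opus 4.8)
The statement to prove is a list of well-known properties of the scaled Cayley transform: it is a biholomorphism with the given inverse formula, and it maps the left half-plane onto the unit disk (and the inverse maps the disk back). Let me sketch a proof plan.

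\textbf{Proof plan.}

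The plan is to verify the claimed inverse formula by direct substitution and then to establish the image correspondences by an explicit computation with the real part / modulus.

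\emph{Step 1: The inverse formula.} First I would check that $M_\tau$ is well-defined and holomorphic on $\C \setminus \{2/\tau\}$ (the denominator $1 - \tfrac{\tau}{2} z$ vanishes exactly at $z = 2/\tau$), and similarly that the candidate inverse $N_\tau(z) = \tfrac{2(z-1)}{\tau(z+1)}$ is holomorphic on $\C \setminus \{-1\}$. Then I would compute $N_\tau(M_\tau(z))$ and $M_\tau(N_\tau(w))$ directly. For the first: with $w = M_\tau(z) = \tfrac{1 + (\tau/2)z}{1 - (\tau/2)z}$ one has $w + 1 = \tfrac{2}{1-(\tau/2)z}$ and $w - 1 = \tfrac{\tau z}{1 - (\tau/2)z}$, so $N_\tau(w) = \tfrac{2}{\tau}\cdot\tfrac{w-1}{w+1} = \tfrac{2}{\tau}\cdot\tfrac{\tau z}{2} = z$. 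The other composition is analogous. One should also note that $M_\tau$ never takes the value $-1$ (since $1 + (\tau/2)z = -(1-(\tau/2)z)$ forces $2 = 0$) and $N_\tau$ never takes the value $2/\tau$, so the compositions make sense on the stated domains; this shows $M_\tau$ is a bijection onto $\C \setminus \{-1\}$ with the given inverse, and since both maps are holomorphic it is a biholomorphism.

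\emph{Step 2: $M_\tau(\HHH) = \D$.} I would compute, for $z \notin \{2/\tau\}$,
\[
  1 - |M_\tau(z)|^2 = \frac{|1 - \tfrac{\tau}{2}z|^2 - |1 + \tfrac{\tau}{2}z|^2}{|1 - \tfrac{\tau}{2}z|^2} = \frac{-2\tau\,\real z}{|1 - \tfrac{\tau}{2}z|^2},
\]
using $|1 \mp \tfrac{\tau}{2} z|^2 = 1 \mp \tau\real z + \tfrac{\tau^2}{4}|z|^2$. Hence $|M_\tau(z)| < 1 \iff \real z < 0$, i.e. $M_\tau$ maps $\HHH$ into $\D$; and since $M_\tau$ is a bijection onto $\C\setminus\{-1\}$ whose inverse is $N_\tau$, the same computation (or applying the equivalence to $w = M_\tau(z)$) gives that $M_\tau$ maps $\C\setminus(\overline{\HHH}\cup\{2/\tau\})$ to $\C\setminus\overline{\D}$ and the imaginary axis (minus $2/\tau$, which is not on it) to the unit circle (minus $-1$). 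Therefore $M_\tau(\HHH) = \D$ exactly, and applying $M_\tau^{-1} = N_\tau$ to both sides yields $M_\tau^{-1}(\D) = \HHH$.

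\textbf{Main obstacle.} There is no real obstacle here — this is a routine verification. The only points requiring a little care are bookkeeping of the excluded points ($2/\tau$ in the domain of $M_\tau$, $-1$ in the domain of $N_\tau$, and checking these are consistent with the claimed image sets) and being explicit that surjectivity onto $\D$ (not just the inclusion $M_\tau(\HHH)\subseteq\D$) follows from the bijectivity established in Step 1 together with the modulus computation applied in the reverse direction.
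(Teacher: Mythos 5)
Your proof is correct: the composition check in Step~1 and the identity $1-|M_\tau(z)|^2=\frac{-2\tau\,\real z}{|1-\frac{\tau}{2}z|^2}$ in Step~2, together with the observation that $-1\notin\D$ lies outside the range so every point of $\D$ is hit, establish the bijection, the inverse formula, and $M_\tau(\HHH)=\D$, $M_\tau^{-1}(\D)=\HHH$ exactly as claimed. The paper gives no proof of this lemma, citing it as well known, so your direct verification simply supplies the standard argument the paper leaves to the literature.
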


The relationship between the exponential stability of continuous-time and discrete-time systems is given by the following lemma. 

\begin{lemma}\label{lem:Cayley} Let $B_c$, respectively $B_d$, be in $\B(\HH)$ and suppose that $\tau>0$ is such that $M_\tau(-B_c)$, respectively $M_\tau^{-1}(B_d)$, are well-defined in $\B(\HH)$. 
\mbox{}
\begin{enumerate}
\item If the zero solution of $\eqref{linodecc}$ is exponentially stable then the zero solution of $\eqref{linodedd}$ with $B_d \coloneqq M_\tau(-B_c)$ is exponentially stable.
\item If the zero solution of $\eqref{linodedd}$ is exponentially stable then the zero solution of $\eqref{linodecc}$ with $-B_c \coloneqq M_\tau^{-1}(B_d)$ is exponentially stable.
\end{enumerate}
\end{lemma}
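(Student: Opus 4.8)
The plan is to reduce the statement to the spectral characterizations of exponential stability already recorded in Theorem~\ref{ct_exponentially_stable} and Theorem~\ref{dt_exponentially_stable}, combined with the spectral-mapping behaviour of the scaled Cayley transform. For part~(i), I would argue as follows. By Theorem~\ref{ct_exponentially_stable}, exponential stability of \eqref{linodecc} is equivalent to $\Lambda(-B_c)\subset\HHH$. Setting $B_d\coloneqq M_\tau(-B_c)$, the spectral mapping theorem for the bounded operator $-B_c$ (applied to the function $M_\tau$, which is holomorphic on a neighbourhood of $\Lambda(-B_c)$ once we know $\tfrac{2}{\tau}\notin\Lambda(-B_c)$ — which is exactly the hypothesis that $M_\tau(-B_c)$ is well-defined in $\B(\HH)$) gives $\Lambda(B_d)=M_\tau(\Lambda(-B_c))$. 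Since $M_\tau(\HHH)=\D$ by the preceding lemma, we conclude $\Lambda(B_d)\subset\D$, and Theorem~\ref{dt_exponentially_stable} then yields exponential stability of \eqref{linodedd}. Part~(ii) is entirely symmetric: from $\Lambda(B_d)\subset\D$ and the fact that $M_\tau^{-1}(B_d)$ is well-defined (so $-1\notin\Lambda(B_d)$), the spectral mapping theorem gives $\Lambda(-B_c)=M_\tau^{-1}(\Lambda(B_d))\subset M_\tau^{-1}(\D)=\HHH$, and Theorem~\ref{ct_exponentially_stable} finishes the argument.

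The key steps in order are therefore: (1) invoke the spectral characterization on the continuous-time side; (2) check that $M_\tau$ (resp. $M_\tau^{-1}$) is holomorphic on a neighbourhood of the spectrum, using the well-definedness hypothesis to rule out the pole; (3) apply the holomorphic spectral mapping theorem to identify $\Lambda(M_\tau(-B_c))$ with $M_\tau(\Lambda(-B_c))$; (4) use $M_\tau(\HHH)=\D$ from the lemma above; (5) conclude via the discrete-time spectral characterization. The reverse direction permutes the roles of $M_\tau$ and $M_\tau^{-1}$ and of $\HHH$ and $\D$.

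The main obstacle — really the only subtlety — is the bookkeeping around the pole of the Cayley transform: one must confirm that ``$M_\tau(-B_c)$ well-defined in $\B(\HH)$'' forces $\tfrac{2}{\tau}\notin\Lambda(-B_c)$ (equivalently $\tfrac{\tau}{2}(-B_c)-I$ is boundedly invertible), so that $M_\tau$ can be applied via the holomorphic functional calculus and the spectral mapping theorem is legitimate. Once that is in place, everything is a direct chase through the three lemmas/theorems already available, with no estimates required. I would keep the write-up to a few lines, proving part~(i) in detail and remarking that part~(ii) follows by the symmetric argument with $M_\tau$ replaced by $M_\tau^{-1}$.
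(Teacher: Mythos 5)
Your proposal is correct and follows essentially the same route as the paper: spectral characterization of exponential stability (Theorems \ref{ct_exponentially_stable} and \ref{dt_exponentially_stable}), the spectral mapping theorem for the holomorphic functional calculus, and the identities $M_\tau(\HHH)=\D$, $M_\tau^{-1}(\D)=\HHH$, with the well-definedness hypothesis handling the pole exactly as the paper's accompanying remark does. Nothing is missing.
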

We remark that $M_\tau(-B_c)$ (resp. $M_\tau^{-1}(B_d)$) are well-defined via the holomorphic functional calculus (see e.g. \cite[Chapter VII, 3, Definition 9]{dunford_schwartz}) if $\frac2\tau\not\in \Lambda(-B_c)$ (resp. $-1\not\in \Lambda(B_d$)). 
\begin{proof}
We prove only (i), the proof for (ii) follows analogously: Let the  zero solution of $\eqref{linodecc}$ be exponentially stable. Then by Theorem \ref{ct_exponentially_stable}, $\Lambda(-B_c) \subset \HHH$. Hence, $B_d \coloneqq M_\tau(-B_c)$ is well-defined, and by the spectral mapping theorem (\cite[Chapter VII, 3, Theorem 11]{dunford_schwartz}) $\Lambda(B_d) = M_\tau(\Lambda(-B_c)) \subset M_\tau(\HHH) = \D$. Consequently, Theorem \ref{dt_exponentially_stable} implies that the zero solution of $\eqref{linodedd}$ is exponentially stable.
\end{proof}
The following well-known fact will be used in our results without explicit mentioning it.
\begin{remark}\label{rem:commute}{\rm
For $A \in \B(\HH)$, $\mathcal O \subset \C$ open with $\Lambda(A) \subset \mathcal O$ let $f,g \colon \mathcal O \to \C$ be analytic; hence $f(A)$ commutes with $g(A)$ (\cite[Chapter VII, 3, Theorem 10 b]{dunford_schwartz}). Then rational expressions in $A$ commute with each other if they are well-defined. 
}
\end{remark}

In order to show that not only exponential stability but also the hypocoercivity/hypocontractivity index is preserved under the Cayley transform, we need the following auxiliary lemmas.

\begin{lemma}\label{polynomial_inequality}
Let $A,\tilde A \in \B(\HH)$, $\tilde A\geq 0$ and $n \in \N_0$. Then 
\begin{align}\label{eq:polynomial_inequality}
\sum_{k=0}^n (\Id + A^*)^{k} (\Id - A^*)^{n-k} \tilde A (\Id - A)^{n-k} (\Id + A)^{k} \geq \sum_{k=0}^n (A^*)^k \tilde A A^k.
\end{align}
\begin{proof}
We proceed by induction. For $n=0$, \eqref{eq:polynomial_inequality} is trivially satisfied because the left- and right-hand side are equal to $\tilde A$. Now assume that \eqref{eq:polynomial_inequality} holds for some $n \in \N_0$. Then
\begin{align*}
&\sum_{k=0}^{n+1} (\Id + A^*)^{k} (\Id - A^*)^{n+1-k} \tilde A (\Id - A)^{n+1-k} (\Id + A)^{k} \nonumber \\
&= (\Id + A^*)^{n+1} \tilde A (\Id + A)^{n+1} + (\Id - A^*) \left( \sum_{k=0}^n (\Id + A^*)^{k} (\Id - A^*)^{n-k} \tilde A (\Id - A)^{n-k} (\Id + A)^{k} \right) (\Id - A) \nonumber \\
&\geq (\Id + A^*)^{n+1} \tilde A (\Id + A)^{n+1} + (\Id - A^*) \sum_{k=0}^n (A^*)^k \tilde A A^k  (\Id - A).
\end{align*}
Analogously,
\begin{align*}
&\sum_{k=0}^{n+1} (\Id + A^*)^{k} (\Id - A^*)^{n+1-k} \tilde A (\Id - A)^{n+1-k} (\Id + A)^{k} \nonumber \\
&= (\Id - A^*)^{n+1} \tilde A (\Id - A)^{n+1} + (\Id + A^*) \left( \sum_{k=0}^n (\Id + A^*)^{k} (\Id - A^*)^{n-k} \tilde A (\Id - A)^{n-k} (\Id + A)^{k} \right) (\Id + A) \nonumber \\
&\geq (\Id - A^*)^{n+1} \tilde A (\Id - A)^{n+1} + (\Id + A^*) \sum_{k=0}^n (A^*)^k \tilde A A^k  (\Id + A).
\end{align*}
Adding these two inequalities yields
\begin{align*}
&2\sum_{k=0}^{n+1} (\Id + A^*)^{k} (\Id - A^*)^{n+1-k} \tilde A (\Id - A)^{n+1-k} (\Id + A)^{k} \\
&\geq (\Id + A^*)^{n+1} \tilde A (\Id + A)^{n+1} + (\Id - A^*)^{n+1} \tilde A (\Id - A)^{n+1} + 2\sum_{k=1}^{n+1} (A^*)^k \tilde A A^k + 2\sum_{k=0}^n (A^*)^k \tilde A A^k \\
&\geq 2\sum_{k=0}^{n+1} (A^*)^k \tilde A A^k,
\end{align*}
which proves the assertion after dividing by $2$.
\end{proof}
\end{lemma}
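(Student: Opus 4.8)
The plan is to prove \eqref{eq:polynomial_inequality} by induction on $n$, exactly as the statement suggests, using the symmetry between the roles of $\Id+A$ and $\Id-A$ in the left-hand side. The base case $n=0$ is immediate since both sides equal $\tilde A$. For the induction step, the key observation is that the degree-$(n+1)$ sum on the left can be split off in two different ways: either peeling off the $k=n+1$ term and factoring $(\Id-A^*)\,\cdot\,(\Id-A)$ out of the remaining sum (which has index $0\le k\le n$ and is then exactly the degree-$n$ left-hand side), or peeling off the $k=0$ term and factoring $(\Id+A^*)\,\cdot\,(\Id+A)$ out of the rest. This works because in the sum $\sum_{k=0}^{n+1}(\Id+A^*)^k(\Id-A^*)^{n+1-k}\tilde A(\Id-A)^{n+1-k}(\Id+A)^k$ the re-indexed middle block $\sum_{k=0}^{n}(\Id+A^*)^k(\Id-A^*)^{n-k}\tilde A(\Id-A)^{n-k}(\Id+A)^k$ appears, up to conjugation by $(\Id\mp A)$.

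Next I would apply the induction hypothesis to the bracketed degree-$n$ sum in both decompositions: since $X\ge Y\ge 0$ implies $C^*XC\ge C^*YC$ for any bounded $C$ (here $C=\Id-A$ or $C=\Id+A$), and since conjugation preserves the inequality $\ge$, both decompositions yield a lower bound in terms of $\sum_{k=0}^n (A^*)^k\tilde A A^k$ conjugated by the appropriate factor. Then I would add the two resulting inequalities. On the right-hand side of the sum one gets $(\Id+A^*)^{n+1}\tilde A(\Id+A)^{n+1}+(\Id-A^*)^{n+1}\tilde A(\Id-A)^{n+1}$ plus $(\Id-A^*)\big(\sum_{k=0}^n(A^*)^k\tilde A A^k\big)(\Id-A)+(\Id+A^*)\big(\sum_{k=0}^n(A^*)^k\tilde A A^k\big)(\Id+A)$, and the final step is to recognize that $(\Id-A^*)Z(\Id-A)+(\Id+A^*)Z(\Id+A)=2Z+2A^*ZA$ for any self-adjoint $Z\ge0$, so this middle contribution equals $2\sum_{k=0}^n(A^*)^k\tilde A A^k+2\sum_{k=0}^n(A^*)^{k+1}\tilde A A^{k+1}=2\sum_{k=0}^{n+1}(A^*)^k\tilde A A^k+2\sum_{k=1}^{n}(A^*)^k\tilde A A^k$. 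Discarding the nonnegative terms $(\Id\pm A^*)^{n+1}\tilde A(\Id\pm A)^{n+1}$ and the extra $2\sum_{k=1}^n(A^*)^k\tilde A A^k$ then gives $2\sum_{k=0}^{n+1}(A^*)^k\tilde A A^k$ as a lower bound for twice the degree-$(n+1)$ left-hand side; dividing by $2$ closes the induction.

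The only point requiring a little care is the bookkeeping of the two split-off identities and making sure the re-indexing of the middle sum matches the degree-$n$ left-hand side exactly so that the induction hypothesis applies verbatim; this is routine but must be written out. There is no serious analytic obstacle here, since everything takes place among bounded self-adjoint operators and the operator inequality $\ge$ is preserved under conjugation by bounded operators and under addition. (I note that in the write-up below the proof is presented inline with the lemma; the argument above is essentially the one displayed there.)
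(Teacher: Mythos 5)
Your proposal is correct and follows essentially the same route as the paper's proof: the same two splittings of the degree-$(n+1)$ sum (peeling off the $k=n+1$ term and conjugating the rest by $\Id-A$, respectively peeling off $k=0$ and conjugating by $\Id+A$), applying the induction hypothesis under conjugation, adding the two bounds, and using $(\Id-A^*)Z(\Id-A)+(\Id+A^*)Z(\Id+A)=2Z+2A^*ZA$ before discarding the nonnegative leftover terms. The index bookkeeping you describe matches the paper's computation exactly, so there is nothing further to add.
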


\begin{lemma}\label{symmetric_part_discrete_continuous}
Let $A \in \B(\HH)$.
\begin{enumerate}
\item If $-1 \notin \Lambda(A)$, then
\begin{align*}
-\left(M^{-1}(A)\right)_H = (A^* + \Id)^{-1}(\Id - A^* A)(A + \Id)^{-1}.
\end{align*}
\item If $1 \notin \Lambda(A)$, then
\begin{align*}
\Id - M(A)^*M(A) = 4(\Id - A^*)^{-1}(-A_H)(\Id - A)^{-1}.
\end{align*}
\end{enumerate}
\end{lemma}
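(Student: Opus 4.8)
The plan is to treat the two identities as purely algebraic statements about bounded operators, relying only on the functional calculus (Remark \ref{rem:commute}) to guarantee that all the rational expressions in $A$ involved commute with one another whenever they are defined. The starting point for (i) is the definition of the Cayley transform $M^{-1}(A) = 2(A-\Id)(A+\Id)^{-1}$ (with $\tau = 2$ in the formula for $M_\tau^{-1}$), so that $-M^{-1}(A) = 2(\Id - A)(\Id + A)^{-1} = 2(\Id + A)^{-1}(\Id - A)$, the two forms being equal since $\Id - A$ and $(\Id + A)^{-1}$ commute. I would then compute the Hermitian part directly:
\begin{align*}
-\bigl(M^{-1}(A)\bigr)_H &= \tfrac12\Bigl( -M^{-1}(A) + (-M^{-1}(A))^* \Bigr) = (\Id + A)^{-1}(\Id - A) + (\Id - A^*)(\Id + A^*)^{-1}.
\end{align*}
Bringing this over the common "denominator'' $(A^* + \Id)^{-1}(\cdot)(A+\Id)^{-1}$ — legitimate because everything commutes — the numerator becomes $(\Id + A^*)(\Id - A) + (\Id - A^*)(\Id + A) = 2(\Id - A^*A)$, and after the factor of $\tfrac12$ this yields exactly $(A^* + \Id)^{-1}(\Id - A^*A)(A+\Id)^{-1}$, as claimed.

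For (ii) the cleanest route is to observe that it is essentially the inverse relation to (i): writing $B = M(A)$, one has $A = M^{-1}(B)$ (here $M = M_2$ is its own functional inverse up to the formulas above), and part (i) applied to $B$ gives $-(M^{-1}(B))_H = -A_H = (B^* + \Id)^{-1}(\Id - B^*B)(B + \Id)^{-1}$. Solving this for $\Id - B^*B$ by multiplying on the left by $(B^* + \Id)$ and on the right by $(B + \Id)$ gives $\Id - M(A)^*M(A) = (M(A)^* + \Id)(-A_H)(M(A) + \Id)$, so it remains only to verify the algebraic identity $M(A) + \Id = 2(\Id - A)^{-1}$, which is immediate from $M(A) = (\Id + A)(\Id - A)^{-1}$ since $(\Id + A) + (\Id - A) = 2\Id$ and the factors commute. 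Substituting this and its adjoint produces the factor $4$ and the stated form $4(\Id - A^*)^{-1}(-A_H)(\Id - A)^{-1}$. Alternatively, one can do (ii) by the same brute-force computation as (i), expanding $\Id - M(A)^*M(A)$ with $M(A) = (\Id + A)(\Id - A)^{-1}$ and combining over the denominator $(\Id - A^*)^{-1}(\cdot)(\Id - A)^{-1}$, whose numerator collapses to $(\Id - A^*)(\Id - A) - (\Id + A^*)(\Id + A) = -2(A + A^*) = 4(-A_H)$.

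The only genuine subtlety — and the one point that deserves an explicit word in the write-up — is the commutativity bookkeeping: moving inverses past sums and products of $A$ and $A^*$ is valid here precisely because $(\Id \pm A)^{\pm 1}$ are all functions of the single operator $A$ (Remark \ref{rem:commute}), but $A^*$ does \emph{not} in general commute with $A$, so one must be careful to keep the $A$-factors and the $A^*$-factors on their respective sides and only cancel adjacent like factors. Apart from that, both identities reduce to one-line scalar-style manipulations once the common denominator is fixed, so no real obstacle arises; I would present (i) in full and obtain (ii) from it by the inversion argument to avoid repeating the computation.
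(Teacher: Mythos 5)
Your method is essentially the paper's: part (i) is proved there by exactly the common-denominator computation you describe, and the ``brute-force'' alternative you sketch for (ii) --- expanding $\Id - M(A)^*M(A)$ with $M(A)=(\Id+A)(\Id-A)^{-1}$ and collapsing the numerator to $(\Id-A^*)(\Id-A)-(\Id+A^*)(\Id+A)=-2(A+A^*)$ --- is verbatim the paper's proof of (ii). Your preferred route for (ii), deducing it from (i) applied to $B=M(A)$, is a legitimate variation: it replaces a second computation by the one-line identity $M(A)+\Id=2(\Id-A)^{-1}$, at the cost of invoking the composition rule of the holomorphic functional calculus ($M^{-1}(M(A))=A$) and of checking that (i) is applicable to $B$, i.e.\ that $-1\notin\Lambda(M(A))$. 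The latter does hold automatically, since $M(z)=\frac{1+z}{1-z}$ never attains the value $-1$ and the spectral mapping theorem applies, but you should state this explicitly.

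One concrete slip to fix: with the paper's normalization ($\tau=2$) the inverse Cayley transform is $M^{-1}(A)=(A-\Id)(A+\Id)^{-1}$, \emph{without} the prefactor $2$; in $M_\tau^{-1}(z)=\frac{2(z-1)}{\tau(z+1)}$ the $2$ and the $\tau=2$ cancel. As written, your spurious factor $2$ is absorbed by the $\tfrac12$ of the Hermitian part in your first display, so the later phrase ``after the factor of $\tfrac12$'' invokes a factor that is no longer available; following your constants literally, you would land on $2(A^*+\Id)^{-1}(\Id-A^*A)(A+\Id)^{-1}$, twice the claimed identity. With the correct formula for $M^{-1}$ the $\tfrac12$ survives to the end and cancels the $2$ in the numerator $(\Id+A^*)(\Id-A)+(\Id-A^*)(\Id+A)=2(\Id-A^*A)$, and the computation then agrees with the paper's. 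Your commutativity bookkeeping (move inverses only past functions of the same operator, never commute $A$ with $A^*$) is exactly the point that matters and is handled correctly.
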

\begin{proof}
\mbox{}
\begin{enumerate}
\item As $-1 \notin \Lambda(A)$, all expressions are well-defined. Moreover,
\begin{align*}
-\left(M^{-1}(A)\right)_H &= -\frac{1}{2} \left( M^{-1}(A)^* + M^{-1}(A) \right) = -\frac{1}{2} \left( (A^* - \Id)(A^* + \Id)^{-1} + (A - \Id)(A + \Id)^{-1} \right) \\
&= -\frac{1}{2} (A^* + \Id)^{-1} \big( (A^* - \Id)(A + \Id) + (A^* + \Id)(A - \Id) \big) (A + \Id)^{-1} \\
&= (A^* + \Id)^{-1} \left( \Id - A^*A \right) (A + \Id)^{-1}.
\end{align*}
\item Again, as $1 \notin \Lambda(A)$, all expressions are well-defined and
\begin{align*}
\Id - M(A)^*M(A) &= \Id - (\Id + A^*)(\Id - A^*)^{-1}(\Id + A)(\Id - A)^{-1} \\
&= (\Id - A^*)^{-1} \big( (\Id - A^*)(\Id - A) - (\Id + A^*)(\Id + A) \big) (\Id - A)^{-1} \\
&= 4 (\Id - A^*)^{-1} (-A_H) (\Id - A)^{-1}. \qedhere
\end{align*}
\end{enumerate}
\end{proof}

With these two lemmas we can show
the following equivalence result for the hypocoercivity and the hypocontractivity index under the Cayley transform.
For finite-dimensional spaces it was shown in \cite[Theorem 51]{AchAM23ELA} using the second criteria from Lemma \ref{hypocoercivity_equivalent_conditions} and Lemma \ref{hypocontractivity_equivalent_conditions}.
\begin{theorem}\label{index_preserved_by_Cayley}Let $B_c$ and $B_d$ be in $\B(\HH)$ 
with $-1\not\in\Lambda(B_c)\cap\Lambda(B_d)$ 
such that $M(-B_c)$ and $M^{-1}(B_d)$,
are well-defined in $\B(\HH)$.
\mbox{}
\begin{enumerate}
\item If the zero solution of $\eqref{linodecc}$ is exponentially stable and $B_c$ has hypocoercivity index $m \in \N_0$, then the zero solution of $\eqref{linodedd}$ with $B_d \coloneqq M(-B_c)$ is exponentially stable and $B_d$ has hypocontractivity index $m$.
\item If the zero solution of $\eqref{linodedd}$ is exponentially stable and $B_d$ has hypocontractivity index $m \in \N_0$, then the zero solution of $\eqref{linodecc}$ with $-B_c \coloneqq M^{-1}(B_d)$ is exponentially stable and $B_c$ has hypocoercivity index $m$.
\end{enumerate}
\end{theorem}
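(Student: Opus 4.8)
The plan is to prove statement (i); statement (ii) will follow by the completely symmetric argument, interchanging the roles of $M$ and $M^{-1}$ and of the continuous-time and discrete-time coercivity sums. Throughout I write $A \coloneqq -B_c$, so $B_d = M(A) = (\Id+A)(\Id-A)^{-1}$, and I use that $\Lambda(A) \subset \HHH$ (which holds by Theorem~\ref{ct_exponentially_stable} since the zero solution is exponentially stable), so that all rational expressions below are well-defined and, by Remark~\ref{rem:commute}, commute with one another. Exponential stability of the discrete system is already contained in Lemma~\ref{lem:Cayley}(i), so the real content is that the hypocontractivity index of $B_d$ equals the hypocoercivity index $m$ of $B_c$.

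The key algebraic identity is obtained by combining Lemma~\ref{symmetric_part_discrete_continuous}(ii) (applied to $A=-B_c$, giving $\Id - B_d^* B_d = 4(\Id-A^*)^{-1}(-A_H)(\Id-A)^{-1}$, where $-A_H = B_H$) with the telescoping-type inequality of Lemma~\ref{polynomial_inequality}. The idea: one conjugates the discrete coercivity sum $\sum_{j=0}^n (B_d^*)^j(\Id - B_d^*B_d)B_d^j$ by powers of $(\Id-A)$, using $B_d = (\Id+A)(\Id-A)^{-1}$, to rewrite each term so that Lemma~\ref{polynomial_inequality} (with $\tilde A = B_H$) sandwiches it between the continuous coercivity sum $\sum_{k=0}^n (A^*)^k B_H A^k = \sum_{k=0}^n (B_c^*)^k B_H B_c^k$ and a related expression. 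Concretely, I would show that for each $n$ there are bounded, boundedly invertible operators $T_n, S_n$ (built from $(\Id \pm A)^{\pm 1}$ and commuting with everything) such that
\begin{equation*}
  T_n^*\Bigl(\textstyle\sum_{j=0}^n (B_d^*)^j(\Id-B_d^*B_d)B_d^j\Bigr)T_n \;\geq\; c_n \sum_{k=0}^n (B_c^*)^k B_H B_c^k
\end{equation*}
for some $c_n>0$, and conversely the continuous sum dominates a conjugate of the discrete one. Since congruence by a bounded, boundedly invertible operator preserves the property of being $\geq \kappa I$ for \emph{some} $\kappa>0$ (with a possibly different constant), this shows: $\sum_{j=0}^n (B_d^*)^j(\Id-B_d^*B_d)B_d^j \geq \kappa I$ for some $\kappa>0$ if and only if $\sum_{k=0}^n (B_c^*)^k B_H B_c^k \geq \kappa' I$ for some $\kappa'>0$. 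By Definition~\ref{def:HC+semidiss} and the definition of $m_{dHC}$ this equates the two indices.

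The main obstacle is the bookkeeping in the conjugation identity: expanding $B_d^j = (\Id+A)^j(\Id-A)^{-j}$ inside $(B_d^*)^j(\Id-B_d^*B_d)B_d^j$ and inserting Lemma~\ref{symmetric_part_discrete_continuous}(ii) produces a factor $(\Id-A^*)^{-j}(\Id-A^*)^{-1}(-A_H)(\Id-A)^{-1}(\Id-A)^{-j}$ flanked by $(\Id+A^*)^j,\ (\Id+A)^j$, and one must recognize the resulting sum $\sum_{j=0}^n (\Id+A^*)^j(\Id-A^*)^{n-j}B_H(\Id-A)^{n-j}(\Id+A)^j$ — after conjugating the whole expression by $(\Id-A)^{-n}$ — as exactly the left-hand side of \eqref{eq:polynomial_inequality}. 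This requires that the conjugating powers combine cleanly, which is where commutativity of the functional calculus (Remark~\ref{rem:commute}) and the explicit exponents in Lemma~\ref{polynomial_inequality} must be matched carefully; once the identity is set up, Lemma~\ref{polynomial_inequality} gives one inequality directly, and applying the same reasoning in the reverse direction (writing $A = M^{-1}(B_d)$, using Lemma~\ref{symmetric_part_discrete_continuous}(i), and again Lemma~\ref{polynomial_inequality} but now with $B_d$ in the role of the operator and $\Id - B_d^*B_d$ in the role of $\tilde A$) yields the opposite inequality, closing the equivalence.
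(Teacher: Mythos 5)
Your plan is correct and rests on exactly the same two pillars as the paper's proof: the algebraic translation of Lemma~\ref{symmetric_part_discrete_continuous} and the operator inequality of Lemma~\ref{polynomial_inequality}, combined via congruence with powers of $(\Id+B_c)$ resp.\ $(\Id+B_d)$. Where you genuinely differ is in the logical organization: the paper runs a simultaneous induction on the index, using one application of Lemma~\ref{polynomial_inequality} per step to show the discrete sum at level $m+1$ is coercive, and then invokes the induction hypothesis to rule out a strictly smaller hypocontractivity index (the reverse implication at level $m+1$ being ``proven analogously''). You instead prove, for each fixed $n$, a two-sided equivalence ``$\sum_{j\le n}(B_d^*)^j(\Id-B_d^*B_d)B_d^j$ coercive $\iff$ $\sum_{j\le n}(B_c^*)^jB_HB_c^j$ coercive'' by applying Lemma~\ref{polynomial_inequality} twice --- once with $A=-B_c$, $\tilde A=B_H$, and once with $A=B_d$, $\tilde A=\Id-B_d^*B_d$ --- which lets you read off equality of the indices directly and dispenses with the induction altogether; this is a cleaner scaffolding around the same computation, and in effect it makes explicit the ``analogous'' half that the paper leaves to the reader. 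Two small points to fix when you write it out: the conjugating operator for the sum up to index $n$ is $(\Id+B_c)^{n+1}$ (the paper uses $\tfrac12(\Id+B_c)^{m+2}$ for the sum up to $m+1$), not $(\Id-A)^{-n}$ as you wrote, so match the exponent $n+1$ carefully; and before the second application of Lemma~\ref{polynomial_inequality} you must note that $\tilde A=\Id-B_d^*B_d\ge 0$, which follows from Lemma~\ref{symmetric_part_discrete_continuous}(ii) and $B_H\ge 0$ (i.e.\ $B_d=M(-B_c)$ is semi-contractive); with these details supplied, your congruence argument (coercivity is preserved under congruence by a boundedly invertible operator, with a modified constant) closes the proof as claimed.
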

\begin{proof}
We proceed by induction on the hypocoercivity/hypocontractivity index $m$ (both at the same time). 
For the case that $m=0$ we first assume that $B_c$ has hypocoercivity index $0$, meaning that there exists $\kappa > 0$ such that for the Hermitian part of $B_c$ we have
\begin{align*}
B_H \geq \kappa \Id.
\end{align*}
Using Lemma \ref{symmetric_part_discrete_continuous}(ii), we readily deduce that
\begin{align*}
\Id - M(-B_c)^*M(-B_c) = 4(\Id + B_c^*)^{-1}B_H(\Id +B_c)^{-1} \geq \tilde{\kappa} \Id
\end{align*}
for some $\tilde{\kappa} > 0$ due to  the invertibility of $\Id +B_c$. Consequently, $M(-B_c)$ has hypocontractivity index $0$.

Conversely, assume that $B_d$ has hypocontractivity index $0$, so that there exists $\kappa > 0$ such that
\begin{align*}
\Id - B_d^* B_d \geq \kappa \Id.
\end{align*}
Again, Lemma \ref{symmetric_part_discrete_continuous}(i) yields
\begin{align*}
-\left(M^{-1}(B_d)\right)_H = (B_d^*+\Id)^{-1} (\Id - B_d^* B_d) (B_d+\Id)^{-1} \geq \tilde{\kappa} \Id
\end{align*}
for some $\tilde{\kappa} > 0$ due to the invertibility of $B_d+\Id$. Therefore, $-M^{-1}(B_d)$ has hypocoercivity index $0$.

Suppose now that for some fixed $m \in \N_0$ both statements (i) and (ii) hold for all $m '\in \N_0$ with $m' \leq m$ and assume that $B_c$ has hypocoercivity index $m+1$. Note that the latter assumption rules out the assumptions in statement (i) for $m'\le m$. Using Lemma \ref{symmetric_part_discrete_continuous}(ii) and the definition of the Cayley transform we get
\begin{align}\label{eq:hypocontractivity_expression}
&\sum_{j=0}^{m+1} (M(-B_c)^*)^j \big(\Id - M(-B_c)^* M(-B_c)\big) M(-B_c)^j  \\
&= 4\sum_{j=0}^{m+1} (\Id -B_c^*)^j(\Id +B_c^*)^{-j-1}B_H(\Id +B_c)^{-j-1}(\Id -B_c)^j.\nonumber
\end{align}
Multiplying the right-hand side of \eqref{eq:hypocontractivity_expression} by $\frac{1}{2}(\Id +B_c^*)^{m+2}$ from the left and by $\frac{1}{2}(\Id +B_c)^{m+2}$ from the right yields
\begin{align}\label{eq:hypocontractivity_expression_conjugated}
\sum_{j=0}^{m+1} (\Id -B_c^*)^j(\Id +B_c^*)^{m+1-j}B_H(\Id +B_c)^{m+1-j}(\Id -B_c)^j.
\end{align}
As $\frac{1}{2}(\Id +B_c^*)^{m+2}$ is invertible, it suffices to show that \eqref{eq:hypocontractivity_expression_conjugated} is bounded from below by $\tilde{\kappa} \Id$ for some $\tilde{\kappa} > 0$ in order to deduce that $\eqref{eq:hypocontractivity_expression} \geq  \kappa \Id$ for some $\kappa > 0$. But the former follows immediately by applying Lemma \ref{polynomial_inequality}, which yields
\begin{align*}
\sum_{j=0}^{m+1} (\Id -B_c^*)^j(\Id +B_c^*)^{m+1-j}B_H(\Id +B_c)^{m+1-j}(\Id -B_c)^j \geq \sum_{j=0}^{m+1} (-B_c^*)^j B_H (-B_c)^j
\end{align*}
and the right-hand side is bounded from below by $\tilde{\kappa} \Id$ for some $\tilde{\kappa} > 0$ by assumption. On the one hand this shows that the hypocontractivity index of $M(-B_c)$ is at most $m+1$. But on the other hand it cannot be strictly smaller than $m+1$ due to the induction hypothesis, which in particular says that the hypocontractivity index of $M(-B_c)$ is smaller than $m+1$ if and only if the hypocoercivity index of $B_c$ is smaller than $m+1$. 

The corresponding statement starting with the assumption that the hypocontractivity index of $B_d$ is $m+1$ can be proven analogously.
\end{proof}

\begin{remark}\label{rem:diffproof}{\rm 
The proof in the infinite-dimensional setting differs from the finite-dimensional proof in \cite[Theorem 51]{AchAM23ELA} due to the following reasons: First, the closedness in part (ii) of Lemma~\ref{hypocoercivity_equivalent_conditions} respectively Lemma~\ref{hypocontractivity_equivalent_conditions} is automatically satisfied in the finite-dimensional case and therefore not needed there. Thus, the proof of \cite[Theorem 51]{AchAM23ELA} does not include an argument why the closedness is preserved by the Cayley transform. Moreover, the square root in parts (i) and (ii) of these lemmas can be left out in finite-dimensional settings (see (B1') and (D1) in \cite{AchAM23ELA}), which allows one to use the relation between $\Id-B_d^*B_d$ and $B_H$ given by Lemma \ref{symmetric_part_discrete_continuous}. Note that this relation does not carry over to the square root terms because our operators are not assumed to be normal (and hence $A$, $A^*$ do not commute in general). Therefore, we have rather used Criterion (iii) from Lemma \ref{hypocoercivity_equivalent_conditions} and Lemma \ref{hypocontractivity_equivalent_conditions} to prove Theorem \ref{index_preserved_by_Cayley} as there is no square root appearing there, and thus we can use Lemma \ref{symmetric_part_discrete_continuous}. }
\end{remark}

Next we extend Theorem \ref{index_preserved_by_Cayley} to the scaled Cayley transform. This will be used for the time discretization of continuous-time systems via the implicit midpoint rule.
%
\begin{theorem}\label{index_preserved_by_scaled_Cayley}
Let $B_c$, respectively $B_d$, be in $\B(\HH)$ and suppose that $\tau>0$ is such that $M_\tau(-B_c)$, respectively $M_\tau^{-1}(B_d)$, are well defined in $\B(\HH)$.
\begin{enumerate}
\item If the zero solution of $\eqref{linodecc}$ is exponentially stable and $B_c$ has hypocoercivity index $m \in \N_0$, then the zero solution of $\eqref{linodedd}$ with $B_d \coloneqq M_\tau(-B_c)$ is exponentially stable and $B_d$ has hypocontractivity index $m$.
\item If the zero solution of $\eqref{linodedd}$ is exponentially stable and $B_d$ has hypocontractivity index $m \in \N_0$, then the zero solution of $\eqref{linodecc}$ with $-B_c \coloneqq M_\tau^{-1}(B_d)$ is exponentially stable and $B_c$ has hypocoercivity index $m$.
\end{enumerate}
\end{theorem}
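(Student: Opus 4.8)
The plan is to deduce this from the unscaled case, Theorem~\ref{index_preserved_by_Cayley}, by exploiting the elementary factorization $M_\tau = M \circ s_\tau$, where $s_\tau(z) = \tfrac{\tau}{2}z$ and $M = M_2$. By the composition rule of the holomorphic functional calculus this gives $M_\tau(-B_c) = M\bigl(-\tfrac{\tau}{2}B_c\bigr)$ and, dually, $M_\tau^{-1}(B_d) = \tfrac{2}{\tau}M^{-1}(B_d)$. Thus the whole statement reduces to Theorem~\ref{index_preserved_by_Cayley} plus the claim that replacing the continuous-time generator $-B_c$ by $-\alpha B_c$ with a fixed $\alpha>0$ changes neither the exponential stability of \eqref{linodecc} nor the hypocoercivity index.

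For this scaling invariance I would argue directly. Exponential stability: if $\|e^{-B_c t}\| \le C e^{-\lambda t}$ for all $t\ge0$ with $C\ge1$, $\lambda>0$, then $\|e^{-\alpha B_c t}\| = \|e^{-B_c(\alpha t)}\| \le C e^{-\alpha\lambda t}$, so the rescaled system is exponentially stable with rate $\alpha\lambda$; the converse follows with $\alpha$ replaced by $1/\alpha$. Hypocoercivity index: the Hermitian part of $\alpha B_c$ is $\alpha B_H$, so the coercivity sum \eqref{def:mHC} for $\alpha B_c$ is $\sum_{j=0}^m \alpha^{2j+1} (B_c^*)^j B_H B_c^j$, a positive-weighted sum of the (positive semidefinite) summands of the coercivity sum for $B_c$. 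For fixed $m$ the weights $\alpha^{2j+1}$, $0\le j\le m$, form a finite set of positive numbers, so this sum is sandwiched between $\bigl(\min_j \alpha^{2j+1}\bigr)\sum_{j=0}^m (B_c^*)^j B_H B_c^j$ and $\bigl(\max_j \alpha^{2j+1}\bigr)\sum_{j=0}^m (B_c^*)^j B_H B_c^j$; hence the former is bounded below by a positive multiple of $\Id$ iff the latter is, and the smallest admissible $m$ is unchanged. (Equivalently one may invoke Lemma~\ref{hypocoercivity_equivalent_conditions}, since the kernels $\ker(\sqrt{\alpha B_H}(\alpha B_c)^j)$ and images $\im((\alpha B_c^*)^j\sqrt{\alpha B_H})$ are insensitive to a positive rescaling.)

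With these two facts the proof assembles quickly. For part~(i), set $\tilde B_c \coloneqq \tfrac{\tau}{2}B_c$; well-definedness of $M_\tau(-B_c)$ means $\tfrac{2}{\tau}\notin\Lambda(-B_c)$, which is exactly $1\notin\Lambda(-\tilde B_c)$, i.e.\ $M(-\tilde B_c)$ is well-defined, and $M(-\tilde B_c) = M_\tau(-B_c) = B_d$. The scaling lemma shows that \eqref{linodecc} with generator $-\tilde B_c$ is exponentially stable and $\mHC(\tilde B_c) = \mHC(B_c) = m$, so Theorem~\ref{index_preserved_by_Cayley}(i) applied to $\tilde B_c$ yields that \eqref{linodedd} with $B_d = M(-\tilde B_c)$ is exponentially stable with hypocontractivity index $m$. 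Part~(ii) is symmetric: put $-\tilde B_c \coloneqq M^{-1}(B_d)$ (well-defined since $-1\notin\Lambda(B_d)$, which is again equivalent to well-definedness of $M_\tau^{-1}(B_d)$), apply Theorem~\ref{index_preserved_by_Cayley}(ii) to obtain exponential stability of \eqref{linodecc} with generator $-\tilde B_c$ and $\mHC(\tilde B_c)=m$, and then note $-B_c = M_\tau^{-1}(B_d) = \tfrac{2}{\tau}M^{-1}(B_d) = \tfrac{2}{\tau}(-\tilde B_c)$, so the scaling lemma transfers both exponential stability and the index $m$ from $\tilde B_c$ to $B_c$.

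There is no genuinely hard step here; the result is essentially a bookkeeping corollary of Theorem~\ref{index_preserved_by_Cayley}. The two points that need a little care are (a) matching the well-definedness conditions under the rescaling — the pole of $M_\tau$ at $2/\tau$ corresponds to the pole of $M$ at $1$ after the substitution $z\mapsto\tfrac{\tau}{2}z$, so $\tfrac{2}{\tau}\notin\Lambda(-B_c)$ is precisely what is needed — and (b) the elementary observation that a finite positive-weighted sum of positive semidefinite operators is coercive iff the unweighted sum is; both are routine. As an alternative to (b), the invariance of $\mHC$ under positive scaling can also be read off from Theorem~\ref{thm_propagator_norm_asymptotics}, since $\|e^{-\alpha B_c t}\| = \|e^{-B_c(\alpha t)}\|$ only rescales the time variable and hence leaves the exponent $a = 2\mHC+1$ unchanged.
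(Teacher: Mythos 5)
Your proposal is correct, and for part (i) it coincides with the paper's own argument: both rewrite $M_\tau(-B_c)=M\bigl(-\tfrac{\tau}{2}B_c\bigr)$ and reduce to Theorem~\ref{index_preserved_by_Cayley}(i) via the invariance of exponential stability and of the hypocoercivity index under a positive rescaling of the generator (the paper justifies the index invariance through Lemma~\ref{hypocoercivity_equivalent_conditions}(i), exactly your parenthetical alternative; your positive-weight sandwich of the coercivity sums is an equally valid, slightly more hands-on justification). For part (ii), however, you take a genuinely different and in fact shorter route: you exploit the linearity $M_\tau^{-1}=\tfrac{2}{\tau}M^{-1}$ of the inverse transform, apply Theorem~\ref{index_preserved_by_Cayley}(ii) once to get $\tilde B_c$ with $-\tilde B_c=M^{-1}(B_d)$, and then transfer stability and the index to $B_c=\tfrac{2}{\tau}\tilde B_c$ by the same scaling lemma already used in (i). The paper instead proves the operator identity $M\circ M_\tau^{-1}=M_{4/\tau}\circ M^{-1}$ and chains Theorem~\ref{index_preserved_by_Cayley}(ii), the just-proved part (i) with parameter $4/\tau$, and Theorem~\ref{index_preserved_by_Cayley}(ii) again; this recycles only previously established index-preservation statements but is more roundabout. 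Your version buys symmetry between the two parts and avoids the composition identity altogether; the one point to keep explicit (which you do) is that the well-definedness conditions match up, i.e.\ the pole $2/\tau$ of $M_\tau$ corresponds to the pole $1$ of $M$ under $z\mapsto\tfrac{\tau}{2}z$, while $M_\tau^{-1}$ and $M^{-1}$ share the pole $-1$, so no new spectral hypotheses are introduced.
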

\begin{proof}
We first note that $B_c$ and $\tau B_c$ have the same hypocoercivity index. This follows immediately from part (i) of Lemma \ref{hypocoercivity_equivalent_conditions} because for any $j \in \N_0$ the operators $(B_c^*)^j\sqrt{B_H}$ and $(\tau B_c^*)^j\sqrt{\tau B_H}$ 
have the same image. Consequently, the first assertion follows immediately from Theorem \ref{index_preserved_by_Cayley}(i) and
\begin{align*}
m_{dHC}(M_\tau(-B_c)) = m_{dHC}\left(M\left(-\frac{\tau}{2}B_c\right)\right) = m_{HC}\left(\frac{\tau}{2}B_c\right) = m_{HC}(B_c), 
\end{align*} 
where $m_{HC}$ denotes the hypocoercivity and $m_{dHC}$ the hypocontractivity index. 

For the second assertion one easily verifies that $M\circ M_\tau^{-1}=M_{4/\tau}\circ M^{-1}$, and hence
\[
  B_c:=-M_\tau^{-1}(B_d) = -M^{-1}(M_{4/\tau}( M^{-1}(B_d))).
\]
Using, in this order, Theorem \ref{index_preserved_by_Cayley} (ii), Theorem \ref{index_preserved_by_scaled_Cayley} (i), and again Theorem \ref{index_preserved_by_Cayley} (ii) we obtain the claim:
\begin{align*}
  m &= m_{dHC}(B_d) = m_{HC}\big(-M^{-1}(B_d)\big) = m_{dHC}\big(M_{4/\tau}( M^{-1}(B_d))\big)  \\
  &= m_{HC}\big(-M^{-1}(M_{4/\tau}( M^{-1}(B_d)))\big). \qedhere
\end{align*}
\end{proof}

In this section we have shown that the Cayley transformation maps between the continuous and the discrete setting and leads to an equivalence between the hypocoercivity index and the hypcontractivity index. In the next section we analyze the discretization of the continuous-time solution operator and the discrete approximation of the short-time decay.


\section{Hypocontractivity as finite difference approximation of the short-time decay in hypocoercive systems}\label{sec:t-discrete}


In this section we show that hypocontractive discrete-time systems exhibit the same short-time behavior we have seen in continuous-time systems: for the step size $\tau\to 0$, the approximation order for the constants in the decay formulas is suprisingly higher than the order of the implicit midpoint rule. While the latter is only second order convergent, the preservation of the index and the first non-vanishing derivative is a structural, asymptotic preservation property far beyond the convergence order.

Consider the  implicit midpoint rule  with time step $\tau$ (the Crank-Nicolson scheme), see e.g. \cite{PisZ07}, for the operator equation
\eqref{linodecc}
 at discrete time steps $t_k = k \tau $ with step size $\tau > 0$. 
With $x_k \approx x(t_k)$ this scheme reads
\[ \frac{x_{k+1} - x_k}{\tau} = -B_c\left(\frac{x_{k+1} + x_k}{2}\right) \]
which after rearrangement gives
\begin{equation}
    \label{eqq1}x_{k+1} = B_d x_k
\end{equation}
where $B_d = (\Id+\frac{\tau}{2}B_c)^{-1} (\Id-\frac{\tau}{2} B_c) = M_\tau(-B_c)$ 
is the scaled Cayley transform of $-B_c$. While $\tau$ was arbitrary but fixed in the previous section, $B_d=B_d(\tau)$ will here be considered as a function of $\tau>0$. 

As has been established in Section \ref{sec:short-time}, if the hypocoercivity index of $B_c$ is $m_{HC} \in \N_0$ then the propagator norm of \eqref{linodecc} satisfies
\begin{equation}\label{exp-decay2}
  \Phi(t) \coleq \norm{e^{-B_ct}} = 1 - c t^{2m_{HC}+1} + o(t^{2m_{HC}+1}) \quad \mbox{for }t\to0+
\end{equation}
with the constant $c$ given explicitly by \eqref{c-operator}.
In finite dimensional systems, the propagator norm $\Phi(t)$ is real analytic on some interval $[0,\delta)$ (see \cite{KOHAUPT20011}), but for infinite dimensional $\HH$ this is not true in general: \cite{Graf} presents an example, where $\Phi$ is not even $C^1$ on any interval $[0,\delta)$. The short-time decay formula \eqref{exp-decay2} shows that $\Phi$ is $(2m_{HC}+1)$ times \emph{Peano differentiable} at $t=0$ (this is essentially its definition, see \cite{Fis08}). Moreover, $\Phi$ is (Fr\'echet) differentiable at $t=0$, but not necessarily continuously differentiable on any interval $[0,\delta)$. An example of an $o(t^3)$-function that is differentiable at $t=0$ but not $C^1$ on any $[0,\delta)$ is given by $f(t):=t^4 \sin(1/t^3)$. 
The constant $c$ in \eqref{exp-decay2} is related to the Peano derivative of order $2m_{HC}+1$ at $t=0$, $\Phi_{(2m_{HC}+1)}(0)$ as 
\[ 
c = -\lim_{t \to 0} \frac{\norm{e^{-B_ct}}-1}{t^{2m_{HC}+1}} =: - \frac{\Phi_{(2m_{HC}+1)}(0)}{(2m_{HC}+1)!} \,. 
\]
For finite dimensional systems, the local analyticity of $\Phi$ implies that all Peano derivatives equal their classical counterparts (also in Theorem \ref{th:FinDiff-limit} below).

Next we introduce a discrete analog of the propagator norm $\Phi(t)$ from \eqref{exp-decay2}: For any fixed step size $\tau>0$ we define the grid 
function
$$
  \phi(\tau)=\big(\phi_k(\tau);\, k\in\N_0\big) \quad \mbox{with }
  \phi_k(\tau):= \|B_d(\tau)^k\|.
$$
Note that, for $\tau$ small, for any \emph{fixed number of steps} $k\in\N$, $\phi_k(\tau)$ 
is (at least) a third order approximation for $\Phi(t_k)$ 
due to the convergence order of the midpoint rule.

Inspired by the odd symmetry about $(0,1)$ of the dominant terms in \eqref{exp-decay2}, i.e.\ $1-ct^{2m_{HC}+1}$, we extend $\phi(\tau)$ to negative values of $k$:
$$
  \phi_k(\tau):= 2- \phi_{-k}(\tau),\quad k\in -\N.
$$
This construction is illustrated in Figures \ref{fig:approx_index1} and \ref{fig:approx_index2}, respectively, for the hypocoercive matrices
$$
  B_c:=\begin{bmatrix} 0 & 1 \\ -1 & 1 \end{bmatrix}\quad \mbox{with } m_{HC}=1,\quad \mbox{and }\quad B_c:=\begin{bmatrix} 0 & 1 & 0 \\ -1 & 0 & 1 \\ 0 & -1 & 1 \end{bmatrix} \quad \mbox{with } m_{HC}=2,
$$
and $\tau=\frac12$ in both cases.

\begin{figure}
  \centering
  \includegraphics[width=0.8\textwidth]
  {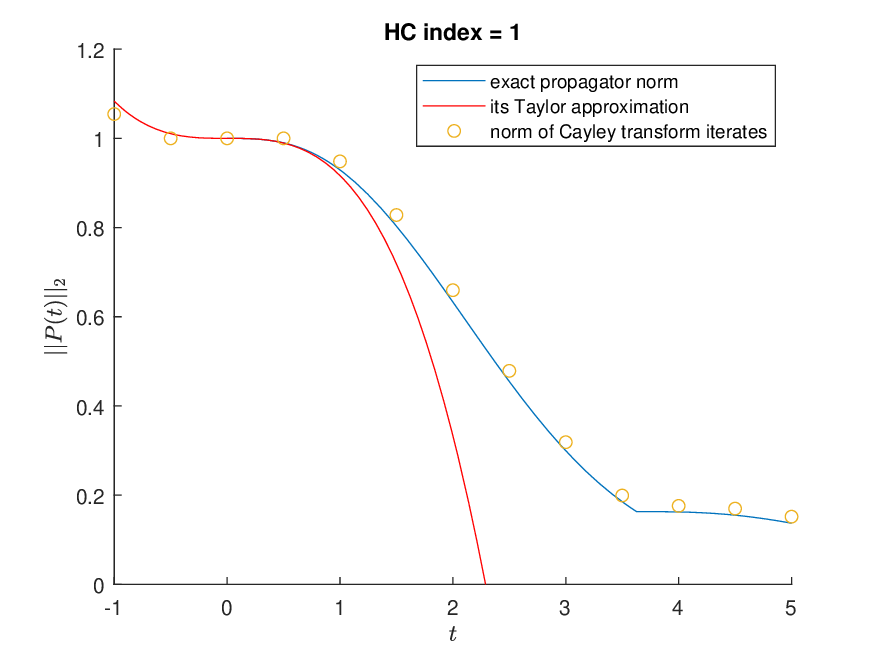}
  \caption{Example with HC-index 1: Approximation of the propagator norm $\|e^{-B_ct}\|$ (blue) by its 3rd order Taylor expansion about $t=0$ (red), and the norm of iterates of the scaled Cayley transform, $\|B_d(\tau)^k\|$ with step size $\tau=\frac12$ (circles). Note the extension of the latter to negative indices, using odd symmetry. This is needed for constructing the symmetric finite difference of the grid function $\phi(\tau)$, centered at $k=0$. }
  \label{fig:approx_index1}
\end{figure}

\begin{figure}
  \centering
  \includegraphics[width=0.8\textwidth]
  {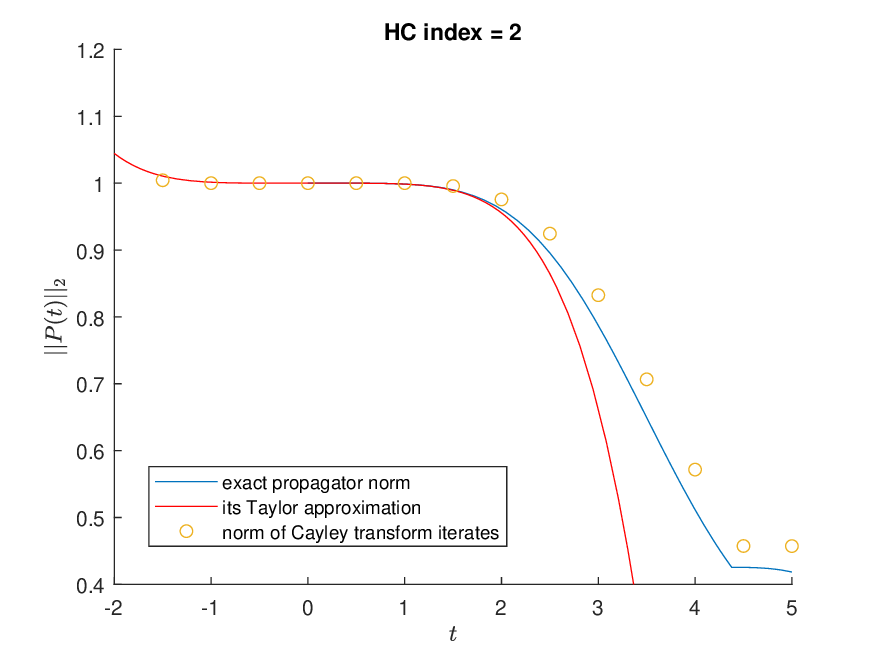}
  \caption{Example with HC-index 2: Approximation of the propagator norm $\|e^{-B_ct}\|$ (blue) by its 5th order Taylor expansion about $t=0$ (red), and the norm of iterates of the scaled Cayley transform, $\|B_d(\tau)^k\|$ with step size $\tau=\frac12$ (circles). Note the extension of the latter to negative indices, using odd symmetry. This is needed for constructing the symmetric finite difference of the grid function $\phi(\tau)$, centered at $k=0$. }
  \label{fig:approx_index2}
\end{figure}

For a matrix $B_c$ with some $m_{HC}\in\N_0$ and the corresponding scaled Caley transform $B_d(\tau)$ with some fixed $\tau>0$, we consider next the symmetric finite difference of order $2m_{HC}+1$ of the discrete function $\phi(\tau)$ about the index $k=0$. Since $\|B_d^j\|=1$ for $j=0,...,m_{HC}$ (see Theorem \ref{Th:hypocontr}(ii)), the above mentioned finite difference satisfies
$$
  \left[\Delta^{2m_{HC}+1}\phi(\tau)\right]_{k=0} = 
  -\frac12 \phi_{-m_{HC}-1}(\tau) + \frac12 \phi_{m_{HC}+1}(\tau)
  =\phi_{m_{HC}+1}(\tau)-1.
$$
Here we have used that the coefficients in the finite difference of order $2m_{HC}+1$ are skew-symmetric about the index $k=0$, the numerical stencil is of length $2m_{HC}+3$, and the first/last coefficients are always $\mp\frac12$. E.g., the coefficients for the first derivative are $-\frac12,\,0,\,\frac12$, and for the third derivative $-\frac12,\,1,\,0,\,-1,\,\frac12$, see \cite{For88}. 

In this section we prove the following connection between the symmetric central difference of $(2m_{HC}+1)$st order of $\phi(\tau)$ and the corresponding Peano derivative of the propagator norm $\Phi$ at $t=0$.

\begin{theorem}\label{th:FinDiff-limit}
Let $-B_c\in\B(\HH)$ be semi-dissipative, and let $m_{HC}\in\N_0$ be the HC-index of $B_c$. Let $B_d(\tau):=M_\tau(-B_c),\,\tau>0$ be the corresponding family of scaled Cayley transforms. Then
\begin{equation}\label{FinDiff-limit}
\frac{\left[\Delta^{2m_{HC}+1}\phi(\tau)\right]_{k=0}}{\tau^{2m_{HC}+1}} = \frac{\phi_{m_{HC}+1}(\tau) - 1}{\tau^{2m_{HC}+1}} \: \stackrel{\tau\to0}{\longrightarrow} \:\Phi_{(2m_{HC}+1)}(0) = - (2m_{HC}+1)!\,c, 
\end{equation}
with the constant $c$ given by \eqref{c-operator}.  
Moreover, 
\begin{equation}\label{FinDiff-limit2}
 \left[\Delta^{j}\phi(\tau)\right]_{k=0} = \Phi_{(j)}(0) = \delta_0^j \quad \mbox{for } j=0,...,2m_{HC}. 
\end{equation}
\end{theorem}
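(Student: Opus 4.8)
\emph{Proof of \eqref{FinDiff-limit2}.} A bounded $B_c$ with finite HC-index is hypocoercive, so Theorem~\ref{index_preserved_by_scaled_Cayley}(i) applies and shows that for every sufficiently small $\tau>0$ the operator $B_d(\tau)$ is semi-contractive with hypocontractivity index $m_{HC}$; Theorem~\ref{Th:hypocontr} then gives $\|B_d(\tau)^j\|=1$ for $0\le j\le m_{HC}$. Hence $\phi_k(\tau)=1$ for $0\le k\le m_{HC}$ and, by the odd extension about $(0,1)$, also for $-m_{HC}\le k\le 0$. A central finite difference of order $j\le 2m_{HC}$ at $k=0$ uses only indices in $\{-m_{HC},\dots,m_{HC}\}$, so it equals the $j$-th central difference of the constant grid function $1$, which is $1$ for $j=0$ and $0$ for $1\le j\le 2m_{HC}$ (such stencils annihilate constants). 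On the other hand, \eqref{exp-decay2} says precisely that $\Phi$ is $(2m_{HC}+1)$ times Peano differentiable at $0$ with $\Phi_{(0)}(0)=1$, $\Phi_{(j)}(0)=0$ for $1\le j\le 2m_{HC}$ and $\Phi_{(2m_{HC}+1)}(0)=-(2m_{HC}+1)!\,c$; this gives \eqref{FinDiff-limit2} and identifies the right-hand side of \eqref{FinDiff-limit}.

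\emph{Setup for \eqref{FinDiff-limit}.} Write $m:=m_{HC}$. Telescoping $\langle(\Id-B_d^*B_d)B_d^jx,B_d^jx\rangle=\|B_d^jx\|^2-\|B_d^{j+1}x\|^2$ (the computation in the proof of Theorem~\ref{Th:hypocontr}) and using Lemma~\ref{symmetric_part_discrete_continuous}(ii) with $A=-\tfrac\tau2 B_c$ — so that $B_d(\tau)=M(A)$, $-A_H=\tfrac\tau2 B_H$ and $\Id-A=\Id+\tfrac\tau2 B_c$ — yields, for $\|x\|=1$,
\[
\|x\|^2-\|B_d(\tau)^{m+1}x\|^2=2\tau\sum_{j=0}^m\bigl\|\sqrt{B_H}\,(\Id+\tfrac\tau2 B_c)^{-1}B_d(\tau)^jx\bigr\|^2 ,
\]
hence $1-\phi_{m+1}(\tau)^2=2\tau\,I_\tau$ with $I_\tau:=\inf_{\|x\|=1}\sum_{j=0}^m\|\sqrt{B_H}(\Id+\tfrac\tau2 B_c)^{-1}B_d(\tau)^jx\|^2$. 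Since $B_d(\tau)\to\Id$ in operator norm, $\phi_{m+1}(\tau)\to1$, and
\[
\frac{\phi_{m+1}(\tau)-1}{\tau^{2m+1}}=-\frac{2}{\phi_{m+1}(\tau)+1}\cdot\frac{I_\tau}{\tau^{2m}}\ \xrightarrow[\tau\to0]{}\ -\lim_{\tau\to0}\frac{I_\tau}{\tau^{2m}},
\]
so \eqref{FinDiff-limit} reduces to $\lim_{\tau\to0}\tau^{-2m}I_\tau=(2m+1)!\,c=L/\binom{2m}{m}$, where $L:=\lim_{\delta\to0}\inf\{\|\sqrt{B_H}B_c^my\|^2:\|y\|=1,\ \|\sqrt{B_H}B_c^py\|\le\delta\ (0\le p<m)\}$ is the constant of \eqref{c-operator}. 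Note $(\Id+\tfrac\tau2 B_c)^{-1}B_d(\tau)^j=\tfrac12(B_d^j+B_d^{j+1})=f_j(\tfrac\tau2 B_c)$ with $f_j(u)=(1+u)^{-(j+1)}(1-u)^j=\sum_{r\ge0}a_{j,r}u^r$, $a_{j,0}=1$, $a_{j,1}=-(2j+1)$.

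\emph{The two-sided estimate.} Here I would mirror the two halves of Section~\ref{sec:short-time}, with the Gram matrix $N:=\bigl(\sum_{j=0}^m a_{j,r}a_{j,s}\bigr)_{r,s=0}^m$ playing the role of the Hilbert matrix; $N$ is positive definite because the first $m+1$ Taylor coefficients of $f_0,\dots,f_m$ are linearly independent. For the \emph{upper bound on $I_\tau$}, use \eqref{eps-kernel} to pick $x_0(\tau)\in\SS$ with $\|\sqrt{B_H}B_c^px_0(\tau)\|$ below any prescribed power of $\tau$ for $p<m$ and with $\|\sqrt{B_H}B_c^mx_0(\tau)\|^2\to L$, and test $I_\tau$ with $x(\tau)\propto\bigl(\Id+\sum_{p=1}^m\lambda_p(\tfrac\tau2 B_c)^p\bigr)x_0(\tau)$, where $\lambda_1,\dots,\lambda_m\in\R$ make $\sum_{j=0}^m\bigl(\sum_{p=0}^m\lambda_p a_{j,m-p}\bigr)^2$ (with $\lambda_0:=1$) minimal, that is, equal to $\min\{\mu^\top N\mu:\mu_m=1\}=(N^{-1})_{m,m}^{-1}$; exactly as in the proof of Theorem~\ref{operator_norm_lower_bound}, only the contributions with both polynomial exponents equal to $m$ survive at order $\tau^{2m}$, giving $\tau^{-2m}I_\tau\le 2^{-2m}(N^{-1})_{m,m}^{-1}\|\sqrt{B_H}B_c^mx_0(\tau)\|^2+o(1)$. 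For the \emph{lower bound on $I_\tau$} (i.e.\ the upper bound on $\phi_{m+1}(\tau)$), take $x(\tau)\in\SS$ nearly attaining $\|B_d(\tau)^{m+1}\|$; a discrete analog of Lemma~\ref{decay_properties}, proved by induction on $m$, gives $\|\sqrt{B_H}B_c^px(\tau)\|=o(\tau^{m-1-p})$ for $p<m$, whereupon (truncating each $f_j$ at degree $2m$ at a uniform cost $O(\tau^{2m+1})$ and then discarding the finitely many remaining terms of exponent $>m$ by the decay) one gets $\sum_{j=0}^m\|\sqrt{B_H}f_j(\tfrac\tau2 B_c)x(\tau)\|^2=\sum_{r,s=0}^m N_{r,s}\langle z_r,z_s\rangle+o(\tau^{2m})$ with $z_r:=(\tfrac\tau2)^r\sqrt{B_H}B_c^rx(\tau)$; a Hilbert-matrix-type bound exactly as in Lemma~\ref{bilinear_form_minimization} (orthogonally decomposing the $z_r$ along $z_m$, using positive definiteness of $N$) bounds this below by $(N^{-1})_{m,m}^{-1}\|z_m\|^2+o(\tau^{2m})$, while $\liminf_{\tau\to0}\|\sqrt{B_H}B_c^mx(\tau)\|^2\ge L$ since $x(\tau)$ asymptotically lies in every $\delta$-constraint set defining $L$. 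The two bounds meet once the combinatorial identity $(N^{-1})_{m,m}=\binom{2m}{m}/4^m$ is established, since then $\tau^{-2m}I_\tau\to 2^{-2m}\cdot(N^{-1})_{m,m}^{-1}\,L=L/\binom{2m}{m}$. That identity I would prove by solving $\sum_{j=0}^m\gamma_j a_{j,r}=\delta_{r,m}$ for $0\le r\le m$, equivalently $\sum_{j=0}^m\gamma_j f_j(u)=u^m+O(u^{m+1})$: writing $\sum_j\gamma_j f_j(u)=\tfrac1{1+u}Q\bigl(\tfrac{1-u}{1+u}\bigr)$ with $Q(w)=\sum_j\gamma_j w^j$ of degree $\le m$ forces $Q(w)=(-2)^{-m}(w-1)^m$, hence $\gamma_j=(-1)^j2^{-m}\binom mj$ and, since $(N^{-1})_{m,m}=\|\gamma\|^2$, $(N^{-1})_{m,m}=4^{-m}\sum_{j=0}^m\binom mj^2=4^{-m}\binom{2m}{m}$ by Vandermonde's identity.

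\emph{Main obstacle.} The delicate point is the lower bound on $I_\tau$, which must hold uniformly over the unit sphere: this forces (a) the discrete concentration lemma in the spirit of Lemma~\ref{decay_properties}, where the interplay of the powers of $\tau$ with the rates $o(\tau^{m-1-p})$ must be tracked through the induction, and (b) a careful bookkeeping of the power-series remainders of $f_j(\tfrac\tau2 B_c)$ showing that, after multiplication by $2\tau$, the passage to the finite quadratic form $\sum_{r,s\le m}N_{r,s}\langle z_r,z_s\rangle$ costs only $o(\tau^{2m+1})$. Everything else — the exact identity, the reduction, the test-vector construction, the positivity argument, and the combinatorial identity $(N^{-1})_{m,m}=\binom{2m}{m}/4^m$ — is a routine adaptation of the machinery developed in Sections~\ref{sec:short-time}--\ref{sec:Cayley}.
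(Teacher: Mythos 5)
Your proposal is correct and follows the same overall strategy as the paper's proof in Subsection~\ref{sec:FinDiff-limit}: \eqref{FinDiff-limit2} is handled exactly as in the paper (immediate from Theorem~\ref{Th:hypocontr}(ii) and \eqref{exp-decay2}), and \eqref{FinDiff-limit} is obtained from two-sided asymptotic bounds mirroring Section~\ref{sec:short-time} — a test vector built from a near-minimizer of the constrained infimum plus a polynomial correction $\sum_p\lambda_p(\tau B_c)^p$, a concentration lemma proved by induction, and a constrained quadratic minimization whose value is $\binom{2m}{m}^{-1}$ up to normalization. What you organize genuinely differently, and more leanly, is the algebraic core. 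Where the paper expands $(B_d^*)^{m+1}B_d^{m+1}$ via Lemma~\ref{mylem5.1} and then tracks multi-indexed Cauchy products (the sums over $p,q\in\N_0^j$ in Lemmas~\ref{mylem4.1}, \ref{mylem4.2} and \ref{mylem4.5}), you first pass to the exact identity $1-\phi_{m+1}(\tau)^2=2\tau I_\tau$ and use $(\Id+\tfrac\tau2B_c)^{-1}B_d(\tau)^j=f_j(\tfrac\tau2 B_c)$ with $f_j(u)=(1-u)^j(1+u)^{-(j+1)}$, so only scalar Taylor truncations and the Gram matrix $N$ of the truncated $f_j$ appear; since $f_j(z/2)$ is exactly the paper's generating function $W^{(j)}(z)$ from \eqref{Wejoterr}, the combinatorics coincide, and your evaluation $(N^{-1})_{m,m}=4^{-m}\binom{2m}{m}$ via the substitution $w=(1-u)/(1+u)$ plus Vandermonde replaces the paper's Cauchy--Schwarz argument with $\alpha_j=(-1)^j\binom{m}{j}$ and its polynomial-in-$j$ degree argument for invertibility. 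Two points you leave as assertions are precisely the paper's Lemmas~\ref{mylem4.5} and \ref{mylem4.6}: (a) positive definiteness of $N$ — this actually follows from your own substitution argument, since $\sum_j\gamma_jf_j(u)\equiv0\bmod u^{m+1}$ forces $(1-w)^{m+1}\mid(1+w)Q(w)$ with $\deg Q\le m$, hence $Q=0$; spell this out rather than merely asserting linear independence of the truncated Taylor coefficients; and (b) the discrete concentration lemma, whose induction requires the a priori bound $I_\tau=\mathcal O(\tau^{2m})$ (equivalently $1-\phi_{m+1}(\tau)=\mathcal O(\tau^{2m+1})$, the paper's Remark~\ref{myrem4.4}), which is supplied by your test-vector upper bound on $I_\tau$ — state that ordering explicitly so the argument is visibly non-circular. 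With these two items written out, the constants match: $\tau^{-2m}I_\tau\to 4^{-m}(N^{-1})_{m,m}^{-1}L=L/\binom{2m}{m}=(2m_{HC}+1)!\,c$, as required.
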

\begin{proof}
The proof will be given in Subsection~\ref{sec:FinDiff-limit}.
\end{proof}

Theorem~\ref{th:FinDiff-limit} implies that the norm of the first strictly contractive power of the discrete propagator from \eqref{eqq1}, $B_d(\tau)^{m_{HC}+1}$, has an asymptotic expansion similar to \eqref{exp-decay2}.

\begin{corollary}\label{cor:FinDiff-limit}
Under the assumptions of Theorem \ref{th:FinDiff-limit} it holds that
\begin{equation}\label{eqqq1}
    \phi_{m_{HC}+1}(\tau) \coleq \norm{B_d(\tau)^{m_{HC}+1}} = 1 - (2m_{HC}+1)! \,c \tau^{2m_{HC}+1} + o(\tau^{2m_{HC}+1}) \quad \mbox{for }\tau\to0+
\end{equation}
with the constant $c$ again given by \eqref{c-operator}. 
\end{corollary}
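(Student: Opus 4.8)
The plan is to read off this corollary directly from Theorem~\ref{th:FinDiff-limit}, essentially by unwinding the little-$o$ notation, since all the genuine work will already have been done in proving that theorem. First I would recall the elementary identity established in the paragraph preceding Theorem~\ref{th:FinDiff-limit},
\[
  \left[\Delta^{2m_{HC}+1}\phi(\tau)\right]_{k=0} = \phi_{m_{HC}+1}(\tau) - 1 ,
\]
which holds because $B_d(\tau)$ has hypocontractivity index $m_{HC}$ for every $\tau>0$ (Theorem~\ref{index_preserved_by_scaled_Cayley}(i), applicable since a finite HC-index of the semi-dissipative $-B_c$ forces $B_c$ to be hypocoercive), hence $\norm{B_d(\tau)^j}=1$ for $0\le j\le m_{HC}$ by Theorem~\ref{Th:hypocontr}(ii); consequently the skew-symmetric stencil of length $2m_{HC}+3$ defining the central difference of order $2m_{HC}+1$ annihilates the grid values $\phi_{-m_{HC}}(\tau),\dots,\phi_{m_{HC}}(\tau)$, and the two outermost values enter with coefficients $\mp\tfrac12$, so the odd extension $\phi_{-k}(\tau)=2-\phi_k(\tau)$ collapses the sum to $\phi_{m_{HC}+1}(\tau)-1$.

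Next I would invoke the convergence statement \eqref{FinDiff-limit} of Theorem~\ref{th:FinDiff-limit}, namely
\[
  \frac{\phi_{m_{HC}+1}(\tau) - 1}{\tau^{2m_{HC}+1}} \;\stackrel{\tau\to0+}{\longrightarrow}\; -(2m_{HC}+1)!\,c ,
\]
with $c$ as in \eqref{c-operator}. Setting $r(\tau):=\tau^{-(2m_{HC}+1)}\big(\phi_{m_{HC}+1}(\tau)-1\big)+(2m_{HC}+1)!\,c$, this says precisely that $r(\tau)\to0$ as $\tau\to0+$; multiplying by $\tau^{2m_{HC}+1}$ and rearranging gives
\begin{align*}
  \phi_{m_{HC}+1}(\tau) &= 1 - (2m_{HC}+1)!\,c\,\tau^{2m_{HC}+1} + r(\tau)\,\tau^{2m_{HC}+1} \\
  &= 1 - (2m_{HC}+1)!\,c\,\tau^{2m_{HC}+1} + o(\tau^{2m_{HC}+1}) ,
\end{align*}
since $r(\tau)\,\tau^{2m_{HC}+1}=o(\tau^{2m_{HC}+1})$ by construction of $r$. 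This is exactly \eqref{eqqq1}.

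The main point to flag is that there is essentially no obstacle \emph{at the level of this corollary}: the entire difficulty is concentrated in Theorem~\ref{th:FinDiff-limit} itself (proved in Subsection~\ref{sec:FinDiff-limit}), in particular in establishing the limit \eqref{FinDiff-limit}, i.e.\ in controlling the $\tau$-dependence of $\norm{B_d(\tau)^{m_{HC}+1}}$ finely enough to recover the Peano derivative of $\Phi$ at $t=0$. Once that theorem is available, the present corollary is a one-line consequence. For completeness one may add that the expansion in \eqref{eqqq1} is a genuine strict decay: $\norm{B_d(\tau)^{m_{HC}+1}}<1$ for all $\tau>0$ by Theorem~\ref{Th:hypocontr}(ii), and the leading coefficient $(2m_{HC}+1)!\,c$ is strictly positive since $c>0$ by Theorem~\ref{thm_propagator_norm_asymptotics}; thus \eqref{eqqq1} is the discrete counterpart of \eqref{exp-decay2}, carrying the very same constant $c$.
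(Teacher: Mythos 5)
Your proposal is correct and takes essentially the same approach as the paper: the paper itself states that \eqref{eqqq1} is equivalent to the limit \eqref{FinDiff-limit}, so granting Theorem~\ref{th:FinDiff-limit}, the corollary is precisely the unwinding of the little-$o$ notation you carry out, together with the stencil identity $\left[\Delta^{2m_{HC}+1}\phi(\tau)\right]_{k=0}=\phi_{m_{HC}+1}(\tau)-1$ already recorded before the theorem. The only (immaterial) difference is the logical direction: the paper proves \eqref{eqqq1} directly from Theorems~\ref{mythm4.3} and~\ref{mythm4.7} and then reads off \eqref{FinDiff-limit}, whereas you pass from \eqref{FinDiff-limit} to \eqref{eqqq1}; since the two statements are equivalent, both readings are fine.
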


Note, however, the additional factor $(2m_{HC}+1)!$ in the expansion~\eqref{eqqq1}. This is due to the difference in interpretation: \eqref{exp-decay2} is an expansion in time, while \eqref{eqqq1} is an expansion in the step size. 
Due to \eqref{eqqq1} and \eqref{exp-decay2}, $\phi_{m_{HC}+1}(\tau)$ is an approximation of order $2m_{HC}+1$ of 
\[
\Phi(t_{m_{HC}+1})=1-(m_{HC}+1)^{2m_{HC}+1} c \tau^{2m_{HC}+1} + o(\tau^{2m_{HC}+1}),
\] 
while the midpoint rule would in general only yield third order here.

\subsection{An illustrating example}

In the following example we illustrate the theoretical results from Corollary \ref{thm_propagator_norm_asymptotics_finite_dim} and Theorem \ref{th:FinDiff-limit} with a concrete matrix.

\begin{example}
Let $\HH = \C^2$, and consider the matrix
\begin{align*}
B_c := 
\begin{bmatrix}
0 &\frac{1}{2} \\
-\frac{1}{2} &1
\end{bmatrix},
\end{align*}
which has the double eigenvalue $\frac12$.
Its Hermitian part
\begin{align*}
B_H = 
\begin{bmatrix}
0 &0 \\
0 &1
\end{bmatrix} \ge0
\end{align*}
is not positive definite. 
But
\begin{align*}
B_H + B_c^* B_H B_c = \begin{bmatrix}
\frac{1}{4} &-\frac{1}{2} \\
-\frac{1}{2} &2
\end{bmatrix}
\end{align*}
is positive definite 
because its eigenvalues are strictly positive. Consequently, $B_c$ is hypocoercive with hypocoercivity index $m_{HC}=1$. In view of Corollary  \ref{thm_propagator_norm_asymptotics_finite_dim} we want to compute the asymptotic behavior of $\Vert e^{-tB_c} \Vert$ for $t\to 0+$. To this end, we first note that the matrix exponential is explicitly given by
\begin{align*}
e^{-tB_c} = e^{-\frac{t}{2}} \begin{bmatrix}
1 + \frac{t}{2} &-\frac{t}{2} \\
\frac{t}{2} &1-\frac{t}{2}
\end{bmatrix},
\end{align*}
which follows from \cite[Corollary 2.3 i)]{Bernstein1993SomeEF}. Moreover, the spectral norm of a real valued matrix $\begin{bmatrix} a &b \\ c &d \end{bmatrix}$ is explicitly given by
\begin{align}\label{eq:explicit_spectral_norm}
\sqrt{\frac{g + \sqrt{g^2 - 4h}}{2}},
\end{align}
where $g = a^2 + b^2 + c^2 + d^2$ and $h = (ad-bc)^2$, which follows from the fact that the spectral norm is given by the largest singular value. Applying this formula to $e^{\frac{t}{2}}e^{-tB_c}$ we deduce $g = 2+t^2$ and $h=1$ and so
\begin{align*}
\Phi(t):=
\left\Vert e^{-tB_c} \right\Vert = e^{-\frac{t}{2}}\sqrt{\frac{2 + t^2 + t \sqrt{4+t^2}}{2}}.
\end{align*}
As this expression is analytic for $t \geq 0$, we can obtain the asymptotic behavior for $t \to 0+$ by differentiation which yields
\begin{align*}
\left\Vert e^{-tB_c} \right\Vert = 1 - \frac{1}{48}t^3 + \mathcal{O}(t^4)
\end{align*}
and $\Phi_{(3)}(0)=\Phi^{(3)}(0)=-\frac18$.
As 
\begin{align*}
\min_{\substack{y \in \ker(\sqrt{B_H})\\ \Vert y \Vert = 1}} \left\Vert \sqrt{B_H} B_c y \right\Vert^2 = \left\langle B_c^* B_H B_c \begin{bmatrix}1\\0\end{bmatrix}, \begin{bmatrix}1\\0\end{bmatrix} \right\rangle = \frac{1}{4},
\end{align*}
we obtain exactly the constant given in Corollary \ref{thm_propagator_norm_asymptotics_finite_dim}. 

Next, we consider the corresponding operator for the discrete-time system with time step $\tau > 0$:
\begin{align*}
B_d(\tau) = M_\tau(-B_c) = 
\frac{1}{(4+\tau)^2}
\begin{bmatrix}
16 + 8\tau - \tau^2
&
-8\tau
\\
8\tau
&
16-8\tau-\tau^2
\end{bmatrix}.
\end{align*}
Again, we use \eqref{eq:explicit_spectral_norm} to deduce the spectral norm of $(4+\tau)^2 B_d(\tau)$. In this case,
\begin{align*}
g &= 2(16-\tau^2)^2 + 256 \tau^2, \\
h &= (16-\tau^2)^4,
\end{align*}
and furthermore
\begin{align*}
\sqrt{g^2-4h} &= 32 \tau (\tau^2+16), \\
g + \sqrt{g^2-4h} &= 2(\tau+4)^4.
\end{align*}
Hence,
\begin{align*}
\left\Vert \begin{bmatrix}
16 + 8\tau - \tau^2
&
-8\tau
\\
8\tau
&
16-8\tau-\tau^2
\end{bmatrix} \right\Vert = (4+\tau)^2
\end{align*}  
and thus
\begin{align*}
\Vert B_d(\tau) \Vert = 1
\end{align*}
for all $\tau > 0$, as was expected by Theorem \ref{index_preserved_by_Cayley}(i) and Theorem \ref{Th:hypocontr}(ii). Next, we consider
\begin{align*}
B_d(\tau)^2 = 
\frac{1}{(4+\tau)^4}
\begin{bmatrix}
(\tau^2 - 8\tau - 16)^2 - 64\tau^2 &
16\tau(\tau^2 - 16) \\
-16\tau(\tau^2 - 16) &
(\tau^2 + 8\tau - 16)^2 - 64\tau^2
\end{bmatrix}.
\end{align*}
As before, we use \eqref{eq:explicit_spectral_norm} to deduce the spectral norm of $(4+\tau)^4 B_d(\tau)^2$: We have
\begin{align*}
g &= 2(\tau^2-16)^2 (\tau^4 + 480\tau^2 + 256),  \\
h &= (\tau-4)^8(\tau+4)^8,
\end{align*}
and furthermore
\begin{align*}
g^2 - 4h  
&= 4096 \tau^2 (\tau^2-16)^4 (\tau^4 + 224 \tau^2 + 256), \\
\frac{g + \sqrt{g^2-4h}}{2} &= (\tau-4)^2(\tau+4)^2 \left( \tau^4 + 480\tau^2 + 256 + 32\tau \sqrt{\tau^4 + 224 \tau^2 + 256} \right).
\end{align*}
Thus,
\begin{align}\label{Bd2-norm}
\phi_2(\tau):= 
\left\Vert B_d(\tau)^2 \right\Vert = \frac{|4-\tau|}{(\tau + 4)^3}  \sqrt{ \tau^4 + 480\tau^2 + 256 + 32\tau \sqrt{\tau^4 + 224 \tau^2 + 256} }.
\end{align}
Note that for $\tau=4$, the matrix $B_d$ is singular and $B_d^2$ is the zero matrix (which is connected to $\Lambda(B_c)=\{\frac12\}$ and \eqref{scaledcayley}). Then \eqref{Bd2-norm} is analytic for $0\leq\tau <4$, and we can obtain the asymptotic behavior for $\tau \to 0+$ by differentiation which yields
\begin{align*}
\left\Vert B_d(\tau)^2 \right\Vert = 1 - \frac{1}{8}\tau^3 + \mathcal{O}(\tau^4).
\end{align*}
Hence,
$$
  \frac{\phi_2(\tau)-1}{\tau^3}\stackrel{\tau\to0}{\longrightarrow} \Phi_{(3)}(0)=-\frac18,
$$
which is exactly the asymptotic expansion predicted by Theorem \ref{th:FinDiff-limit}.
\end{example}



\subsection{Proof of Theorem \ref{th:FinDiff-limit}}
\label{sec:FinDiff-limit}

In this subsection we  prove Theorem \ref{th:FinDiff-limit}. The equality \eqref{FinDiff-limit2} follows trivially from Theorem \ref{Th:hypocontr}(ii) and \eqref{exp-decay2}. Hence we prove here \eqref{eqqq1} which is equivalent to \eqref{FinDiff-limit}.

The proofs follow those of Section \ref{sec:short-time}, the main difference being that instead of $e^{-B_c^*t}e^{-B_ct}$, which has been expanded as the Cauchy product of two series, we have to expand $(B_d^*)^{\mHC+1}B_d^{\mHC+1}$ here, which slightly complicates things.

In the following, we will tacitly assume that $\tau$ is small enough so that $B_d(\tau)$ is well-defined; e.g., $\tau < 2/\norm{B_c}$ is sufficient for this. In this case, $I \pm \frac\tau 2B_c$ and $(I \mp \frac \tau 2 B_c)^{-1}$ commute, see Remark \ref{rem:commute}. Moreover, in the following lemmata we formulate the assumptions mostly in terms of $B_c$ and not for $B_d(\tau)$, since the former is just one single operator and the proofs often involve $B_c$, anyhow.

\begin{lemma}\label{mylem5.1} For any $m \in \N_0$, 
\begin{equation}\label{formel}
(B_d^*(\tau))^{m+1}B_d(\tau)^{m+1} = I -2\tau \sum_{j=0}^m (B_d^*(\tau))^j \left(I+\frac \tau 2 B_c\right)^{-*} B_H \left(I+\frac \tau 2 B_c\right)^{-1} B_d(\tau)^j.
\end{equation}
\end{lemma}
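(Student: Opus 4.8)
The plan is to derive \eqref{formel} by combining an elementary telescoping identity with the closed-form expression for the one-step defect $I-B_d^*(\tau)B_d(\tau)$ furnished by Lemma~\ref{symmetric_part_discrete_continuous}(ii). Throughout I assume, as stipulated before the lemma, that $\tau<2/\norm{B_c}$, so that $I\pm\tfrac\tau2 B_c$ are invertible (Neumann series) and all rational expressions in $B_c$ occurring below commute with one another and with $B_d(\tau)$. For brevity I write $B_d=B_d(\tau)$.

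First I would record the purely algebraic telescoping identity
\[
  I - (B_d^*)^{m+1}B_d^{m+1} \;=\; \sum_{j=0}^m (B_d^*)^j\bigl(I - B_d^*B_d\bigr)B_d^j,
\]
valid for every $m\in\N_0$ and every bounded operator $B_d$, since the $j$-th summand equals $(B_d^*)^jB_d^j-(B_d^*)^{j+1}B_d^{j+1}$ and the sum collapses. This is exactly the identity already exploited in the proof of Theorem~\ref{Th:hypocontr}.

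Next I would compute the one-step defect. Since $B_d(\tau)=M_\tau(-B_c)=M\bigl(-\tfrac\tau2 B_c\bigr)$, applying Lemma~\ref{symmetric_part_discrete_continuous}(ii) with $A:=-\tfrac\tau2 B_c$ (for which $-A_H=\tfrac\tau2 B_H$ and $I-A=I+\tfrac\tau2 B_c$) yields
\[
  I - B_d^*B_d \;=\; 4\bigl(I+\tfrac\tau2 B_c^*\bigr)^{-1}\,\tfrac\tau2 B_H\,\bigl(I+\tfrac\tau2 B_c\bigr)^{-1}
  \;=\; 2\tau\bigl(I+\tfrac\tau2 B_c\bigr)^{-*} B_H\bigl(I+\tfrac\tau2 B_c\bigr)^{-1}.
\]
Alternatively one can verify this directly: conjugating $I-B_d^*B_d$ by $I+\tfrac\tau2 B_c^*$ on the left and $I+\tfrac\tau2 B_c$ on the right, and using $\bigl(I+\tfrac\tau2 B_c^*\bigr)B_d^*=I-\tfrac\tau2 B_c^*$ together with $B_d\bigl(I+\tfrac\tau2 B_c\bigr)=I-\tfrac\tau2 B_c$, one gets $\bigl(I+\tfrac\tau2 B_c^*\bigr)\bigl(I+\tfrac\tau2 B_c\bigr)-\bigl(I-\tfrac\tau2 B_c^*\bigr)\bigl(I-\tfrac\tau2 B_c\bigr)=\tau(B_c+B_c^*)=2\tau B_H$, the $B_c^*B_c$-terms cancelling, and then one inverts the two outer factors.

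Substituting this expression for $I-B_d^*B_d$ into the telescoping identity and rearranging gives precisely \eqref{formel}. There is no real obstacle in this argument; the only points requiring care are the bookkeeping of the scaling constants (the $4$ from the Cayley transform combining with $\tfrac\tau2$ to give $2\tau$) and the commutativity of the factors $I\pm\tfrac\tau2 B_c$ and their inverses, both of which are routine under the standing smallness assumption on $\tau$. If a fully self-contained presentation is preferred, one simply replaces the appeal to Lemma~\ref{symmetric_part_discrete_continuous}(ii) by the two-line direct computation sketched above.
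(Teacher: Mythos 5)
Your proof is correct and amounts to essentially the same argument as the paper's: the paper proves \eqref{formel} by induction on $m$, whose base case $m=0$ is exactly the direct two-line computation of $I-B_d^*B_d$ you sketch (equivalently, Lemma~\ref{symmetric_part_discrete_continuous}(ii) applied to $A=-\tfrac\tau2 B_c$), and whose inductive step is just your telescoping identity written recursively. Your reuse of Lemma~\ref{symmetric_part_discrete_continuous}(ii) and the explicit telescoping sum is a slightly tidier packaging of the same algebra, with the scaling $4\cdot\tfrac\tau2=2\tau$ and the commutativity/invertibility caveats handled correctly.
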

\begin{proof}
We show the claim by induction. For $m=0$ we have

\begin{align*}
    B^*_d B_d &= \left(I - \frac\tau 2 B_c\right)^* \left(I + \frac \tau 2 B_c\right)^{-*} \left(I + \frac \tau 2 B_c\right)^{-1} \left(I - \frac \tau 2 B_c\right) \\
    &= \left(I + \frac \tau 2 B_c\right)^{-*} \left(I - \frac \tau 2 B_c\right)^* \left(I - \frac \tau 2 B_c\right)\left(I + \frac \tau 2 B_c\right)^{-1} \\
    & = I + \left(I + \frac \tau 2 B_c\right)^{-*} \left[ \left( I - \frac \tau 2 B_c\right)^* \left(I - \frac \tau 2 B_c \right) - \left( I + \frac \tau 2 B_c\right)^* \left(I + \frac \tau 2 B_c \right) \right] \left(I + \frac \tau 2 B_c\right)^{-1} \\
    & = I + \left(I + \frac \tau 2 B_c\right)^{-*} \left( - \frac \tau 2 \left( B_c^* + B_c \right) \cdot 2 \right) \left(I + \frac \tau 2 B_c\right)^{-1} \\
    & = I - 2\tau \left( I + \frac \tau 2 B_c\right)^{-*} B_H \left(I + \frac \tau 2 B_c\right)^{-1}.
\end{align*}
Suppose the claim holds for some $m \in \N_0$. Then we have
\begin{align*}
    (B_d^*)^{m+2} B_d^{m+2} &= B_d^* \left( I - 2\tau \sum_{j=0}^m (B_d^*)^j \left(I + \frac \tau 2 B_c\right)^{-*} B_H \left(I + \frac \tau 2 B_c\right)^{-1} B_d^j \right) B_d \\
    & = I - 2 \tau \left(I + \frac \tau 2 B_c\right)^{-*} B_H \left(I + \frac \tau 2 B_c \right)^{-1} \\
    & \qquad - 2 \tau \sum_{j=1}^{m+1} (B_d^*)^j \left(I + \frac \tau 2 B_c\right)^{-*} B_H \left( I + \frac \tau 2 B_c\right)^{-1} B_d^j
\end{align*}
which is the statement for $m+1$.
\end{proof}
Below we will need the series expansions
\begin{align*}
    \left(I + \frac \tau 2 B_c\right)^{-1} &= \sum_{i=0}^\infty \left(-\frac 1 2 \right)^i \tau^i B_c^i, \\
    \left(I + \frac \tau 2 B_c\right)^{-*} &= \sum_{i=0}^\infty \left(-\frac 1 2 \right)^i \tau^i (B_c^*)^i,  \\
    B_d(\tau) &= \left(I + \frac \tau 2 B_c\right)^{-1}\left(I - \frac \tau 2 B_c \right) \\
    & = \sum_{i=0}^\infty \left(-\frac 1 2\right)^i \tau^i B_c^i + \sum_{i=0}^\infty \left(-\frac 1 2 \right)^{i+1} \tau^{i+1} B_c^{i+1} \\
    &= I - \sum_{i=1}^\infty \left(-\frac 1 2 \right)^{i-1} \tau^i B_c^i = \sum_{i=0}^\infty a_i \tau^i B_c^i
\end{align*}
with
\begin{equation}\label{a-def}
  a_i := \begin{cases} 1 & i=0, \\  2\left(-\frac 1 2\right)^{i} & i \ge 1. \end{cases} 
\end{equation}

In the following, for any given $j \in \N$ we abbreviate $p = (p_1, \dotsc, p_j) \in \N_0^j$, $\abso{p} = p_1 + \dotsc + p_j$ and $a_p = a_{p_1} \dotsm a_{p_j}$ for $a_i$ given in \eqref{a-def}, and similarly for $q \in \N_0^j$. For $j=0$ we set $\abso{p}=0$ and $a_p = 1$.

\begin{lemma}
\label{mylem4.1}
Let $m\in\N_0$. Then
\begin{equation} \label{infformula}\inf_{\substack{\lambda_1,\dotsc,\lambda_m \in \C \\ \lambda_0=1}} \sum_{j=0}^m \sum_{\abso{p}+i=0}^m \sum_{\abso{q}+l =0}^m \lambda_{m-\abso{q}-l} \overline{\lambda_{m-\abso{p} - i}} a_p q_q \left(-\tfrac 12\right)^{i+l} = \binom{2m}{m}^{-1}, \end{equation}
and the minimum is attained at a unique $\lambda^* =(\lambda_1^*,...,\lambda_m^*) \in\R^m$. 
\end{lemma}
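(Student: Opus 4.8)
The plan is to mirror the proof of Lemma~\ref{min_of_quadratic_form}: rewrite \eqref{infformula} as a constrained least-squares problem $\min\{\norm{C\mu}^2:\mu_m=1\}$ for a positive-definite quadratic form, and evaluate the minimum by computing a single row of $C^{-1}$. The role of the Hilbert matrix is now played by a matrix of Taylor coefficients of the rational functions produced by the Cayley transform. The starting point is a generating-function identity: by \eqref{a-def}, $a_i$ is the coefficient of $x^i$ in $\tfrac{1-x/2}{1+x/2}$, so $\sum_{p\in\N_0^j}a_p x^{\abso p}=\bigl(\tfrac{1-x/2}{1+x/2}\bigr)^j$, and multiplying by $\tfrac{1}{1+x/2}=\sum_{i\ge0}(-\tfrac12)^i x^i$ gives, for all $r\in\N_0$,
\[
  c^{(j)}_r:=\sum_{\substack{p\in\N_0^j,\,i\in\N_0\\ \abso p+i=r}}a_p\Bigl(-\tfrac12\Bigr)^i=[x^r]\,g_j(x),\qquad g_j(x):=\frac{(1-x/2)^j}{(1+x/2)^{j+1}}
\]
(these $g_j$ are exactly the generating functions of the operators $(I+\tfrac\tau2 B_c)^{-j-1}(I-\tfrac\tau2 B_c)^j=(I+\tfrac\tau2 B_c)^{-1}B_d(\tau)^j$ appearing in Lemma~\ref{mylem5.1}). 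Since the $a_i$ are real, the triple sum in \eqref{infformula} for a fixed $j$ factors as $\bigl|\sum_{r=0}^m c^{(j)}_r\lambda_{m-r}\bigr|^2$; hence, writing $\mu_s:=\lambda_{m-s}$ and $C:=(c^{(j)}_r)_{j,r=0}^m$, \eqref{infformula} equals $\inf\{\norm{C\mu}^2:\mu\in\C^{m+1},\,\mu_m=1\}$, and as in Lemma~\ref{min_of_quadratic_form} taking real parts reduces the infimum to real $\mu$.

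Next I would show that $C$ is invertible. Introduce the Cayley variable $w=w(x):=\tfrac{1-x/2}{1+x/2}$, a biholomorphism near $0$ with $w(0)=1$, $1-w=\tfrac{x}{1+x/2}=x+\mathcal O(x^2)$ and $\tfrac1{1+x/2}=\tfrac{1+w}{2}$; then $g_j=\tfrac{1+w}{2}w^j$, so $\sspan\{g_0,\dots,g_m\}=\{Q(w):\deg_w Q\le m+1,\ Q(-1)=0\}$, an $(m+1)$-dimensional space. The linear map assigning to a polynomial in $w$ of degree $\le m+1$ its Taylor polynomial in $x$ of degree $\le m$ is surjective with one-dimensional kernel $\sspan\{(1-w)^{m+1}\}$: indeed $(1-w)^{m+1}=x^{m+1}+\mathcal O(x^{m+2})$, and writing $Q(w)=\sum_k b_k(1-w)^k$ of degree $\le m+1$, the condition $Q(w(x))=\mathcal O(x^{m+1})$ forces $b_0=\dots=b_m=0$. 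Since $(1-w)^{m+1}$ does not vanish at $w=-1$, it lies outside $\sspan\{g_0,\dots,g_m\}$, so the Taylor map is injective there; equivalently the rows of $C$ are linearly independent, so $C$ is invertible. Consequently $\mu\mapsto\norm{C\mu}^2$ is positive definite, the constrained minimum is attained at a unique real $\mu^*$, and this yields the unique real minimizer $\lambda^*$ asserted in the lemma.

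Finally I would evaluate the minimum. For nonsingular $C$, one has $\min\{\norm{C\mu}^2:\mu_m=1\}=\norm d^{-2}$, where $d$ is the last row of $C^{-1}$ (substitute $\nu=C\mu$ and minimize $\norm\nu^2$ over the hyperplane $\langle d,\nu\rangle=1$). By $C^{-1}C=I$, $d$ is characterized by $[x^r]\sum_j d_j g_j(x)=\delta_{rm}$ for $0\le r\le m$, i.e.\ $\sum_j d_j g_j(x)=x^m+\mathcal O(x^{m+1})$. In the variable $w$ this reads $\tfrac{1+w}{2}\sum_j d_j w^j=x^m+\mathcal O((1-w)^{m+1})$; since $x^m=\tfrac{2^m(1-w)^m}{(1+w)^m}=(1-w)^m+\mathcal O((1-w)^{m+1})$ and the unique polynomial of degree $\le m+1$ that equals $(1-w)^m+\mathcal O((1-w)^{m+1})$ and vanishes at $w=-1$ is $\tfrac{(1+w)(1-w)^m}{2}$, we get $\sum_j d_j w^j=(1-w)^m$, hence $d_j=(-1)^j\binom mj$. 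Therefore $\norm d^2=\sum_{j=0}^m\binom mj^2=\binom{2m}{m}$ by Vandermonde's identity, and the minimum equals $\binom{2m}{m}^{-1}$, which is \eqref{infformula}.

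The main obstacle is the evaluation step: the defining relation for $d$ is unwieldy in the variable $x$, and the decisive move is passing to the Cayley variable $w$, which turns it into an elementary Hermite-type interpolation problem whose solution $(1-w)^m$ produces the binomial coefficients at once. The invertibility argument — controlling the kernel of the Taylor-truncation map on polynomials in $w$ — is the second point requiring care; the rest is the bookkeeping of Lemma~\ref{min_of_quadratic_form} carried over to this setting.
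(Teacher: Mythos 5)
Your proposal is correct, and it reaches the result by a noticeably different mechanism than the paper, even though both rest on the same generating-function skeleton. You and the paper both rewrite \eqref{infformula} as $\min\{\|W\mu\|^2:\mu_m=1\}$ with $W=(w^{(j)}_r)_{j,r=0,\dots,m}$ the Taylor-coefficient matrix of $W^{(j)}(z)=(1-z/2)^j(1+z/2)^{-(j+1)}$ from \eqref{wejoterr}, and both reduce to real $\mu$. From there the paper argues by duality: it \emph{guesses} the vector $\alpha_j=(-1)^j\binom mj$, verifies via the binomial identity \eqref{eq10} that $\sum_j\alpha_jT_j=\lambda_0=1$ on the constraint set (see \eqref{oben}), obtains the lower bound $\binom{2m}{m}^{-1}$ from Cauchy--Schwarz in \eqref{CS-inf}, and then proves attainment separately by showing the $m$-column submatrix $Q$ has full column rank (the combinatorial argument that $w^{(j)}_r$ is a polynomial of degree $r$ in $j$) and that the optimality condition forces $T^*\parallel\alpha$ via \eqref{eq11}. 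You instead prove invertibility of the full square matrix $W$ by passing to the Cayley variable $w$: the span of the $g_j$ is the space of degree-$(m+1)$ polynomials vanishing at $w=-1$, the kernel of Taylor truncation at order $m$ is spanned by $(1-w)^{m+1}$, and that element does not vanish at $w=-1$ -- a clean structural argument. Then you evaluate the constrained minimum exactly as $\|d\|^{-2}$ with $d$ the last row of $W^{-1}$, and you compute $d$ by solving the interpolation problem $\tfrac{1+w}{2}\sum_jd_jw^j=(1-w)^m+\mathcal O((1-w)^{m+1})$, which yields $d_j=(-1)^j\binom mj$ and hence $\binom{2m}{m}^{-1}$ by Vandermonde; in effect you \emph{derive} the paper's dual vector $\alpha$ rather than postulating it, and lower bound, attainment and uniqueness of the real minimizer come out of a single computation. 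What the paper's route buys is that its polynomial-in-$j$ rank argument is reused verbatim for the invertibility of $W$ needed later in Lemma \ref{mylem4.5}, and it avoids any matrix inversion; what your route buys is a self-contained identification of the optimal $d$ and an arguably more transparent reason why binomial coefficients appear. Two small remarks: you correctly read the misprint $a_pq_q$ in \eqref{infformula} as $a_pa_q$, consistent with Lemma \ref{mylem5.1} and Remark \ref{rem:sum}; and for the uniqueness claim over complex $\lambda$ you may wish to add the one-line observation that any minimizer must satisfy $W\mu_I=0$ with $(\mu_I)_m=0$, so invertibility of $W$ forces $\mu_I=0$.
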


\begin{remark}\label{rem:sum}{\rm 
Note that for $m=0$ the infimum is omitted  in \eqref{infformula}.
We also remark that the second sum in \eqref{infformula} is actually a $(j+1)$-fold summation, over the $j$ components of the vector $p$ and the scalar index $i\in\N_0$. So the sum over the indices $\abso{p}+i=0$ to $m$ equals the sum $p_1 + \dotsc + p_j + i = 0$ to $m$.
The same holds for the third sum.

These sums arise from the Cauchy product of the series representation of the operators in \eqref{formel}, namely $j$ factors of $B_d^*$, the factor $(I+\frac\tau 2 B_c)^{-*}$, the factor $(I+\frac \tau 2 B_c)^{-1}$ and $j$ factors of $B_d$, whose contribution to the coefficients in the $(2j+2)$-fold Cauchy product are enumerated by the indices $p_1, \dotsc, p_j$, $i$, $l$, and $q_1 ,\dotsc, q_j$, respectively.}
\end{remark}

We have the following proof of Lemma~\ref{mylem4.1}:
\begin{proof}
Since the claim is trivial for $m=0$, we assume $m\ge1$ in the sequel.
We first note that the expression to be minimized equals $\norm{T}^2 = \sum_{j=0}^m \abso{T_j}^2$ with
\begin{equation}\label{Tj}
  T_j = T_j(\lambda) := \sum_{\abso{p}+i=0}^m \lambda_{m-\abso{p}-i} a_p \left(-\tfrac 12\right)^i,\quad j = 0 ,\dotsc, m,
\end{equation} 
where $\lambda =(\lambda_1,...,\lambda_m) \in\C^m$ and $\lambda_0=1$. 
The generating functions of the sequences $(a_i)_{i\in\N_0}$ and $((-1/2)^i)_{i\in\N_0}$ are
\[ F(z) = \sum_{i=0}^\infty a_i z^i = \frac{1-z/2}{1+z/2} \quad\text{and}\quad G(z) = \sum_{i=0}^\infty \left(-\tfrac 12\right)^i z^i = \frac{1}{1+z/2} \qquad (z \in \C,\, |z|<2), \]
respectively. With
\begin{align}
    W^{(j)}(z) &\coleq F(z)^j G(z) = \frac{\left(1-z/2\right)^j}{\left(1+z/2\right)^{j+1}} = \sum_{r=0}^\infty w_r^{(j)} z^r, \label{Wejoterr} \\
    w_r^{(j)} & :=\sum_{\abso{p}+i=r} a_p \Big(-\frac12\Big)^i, \quad j = 0 ,\dotsc, m,\label{wejoterr}
\end{align} 
and $L(z) \coleq \sum_{k=0}^m \lambda_k z^k$  
we see that $T_j = [z^m]\big(L(z) W^{(j)}(z)\big)$, where $[z^m]f(z)$ denotes the coefficient of $z^m$ in the Taylor series of $f(z)$. For every fixed $j\in\{0,...,m\}$ we have:
\begin{align*}
    [z^m] \left( L(z) W^{(j)}(z) \right) &= [z^m] \sum_{k=0}^m \lambda_k z^k \left(\sum_{i=0}^\infty a_i z^i\right)^j \sum_{i=0}^\infty \left(- \frac 1 2 \right)^i z^i \\
    &= [z^m] \sum_{k=0}^m \lambda_k z^k \sum_{p_1=0}^\infty a_{p_1} z^{p_1} \dotsm \sum_{p_j=0}^\infty a_{p_j} z^{p_j} \sum_{i=0}^\infty \left(-\frac 1 2\right)^i z^i \\
    &= \sum_{k+p_1 + \dotsc + p_j + i = m} \lambda_k a_{p_1} \dotsm a_{p_j} \left(-\frac 12\right)^i = \sum_{\abso{p}+i=0}^m \lambda_{m-\abso{p}-i} a_p \left(-\frac 1 2\right)^i = T_j.
\end{align*}

With $\alpha_j \coleq (-1)^j \binom{m}{j}$ for $j=0 ,\dotsc, m$ it follows that
\begin{equation}\label{eq10}
\begin{aligned}
    \sum_{j=0}^m \alpha_j W^{(j)}(z) & = \frac{1}{1+z/2} \sum_{j=0}^m (-1)^j \binom{m}{j} \left( \frac{1-z/2}{1+z/2} \right)^j \\
    & = \frac{1}{1+z/2} \left(1-\frac{1-z/2}{1+z/2}\right)^m = \frac{z^m}{(1+z/2)^{m+1}} \\
    & = z^m \sum_{i=0}^\infty \binom{-m-1}{i} \left(\frac z 2\right)^i = z^m \sum_{i=0}^\infty \left(-\frac 1 2 \right)^i \binom{m+i}{i} z^i \\
    &= z^m \left( 1 - \frac z 2 + \frac{z^2}{4} - \dotsc \right)^{m+1} .
\end{aligned}
\end{equation}
This gives
\begin{equation}\label{oben}
\begin{aligned}
\sum_{j=0}^m \alpha_j T_j &= \sum_{j=0}^m \alpha_j [z^m](L(z) W^{(j)}(z)) = [z^m] \sum_{j=0}^m \alpha_j L(z) W^{(j)}(z) \\
&= [z^m] \left( L(z) \frac{z^m}{(1+z/2)^{m+1}} \right) = \lambda_0 = 1.
\end{aligned}
\end{equation}
By the Cauchy-Schwarz inequality,
\begin{equation}\label{CS-inf}
  \norm{T}^2 = \sum_{j=0}^m \abso{T_j}^2 \ge \frac{\left(\sum_{j=0}^m \alpha_j T_j \right)^2}{\sum_{j=0}^m \alpha_j^2} = \left(\sum_{j=0}^m \binom{m}{j}^2\right)^{-1} = \binom{2m}{m}^{-1}, 
\end{equation} 
and the r.h.s.\ is the claimed infimum from \eqref{infformula}. So it remains to show that this infimum is attained.

Equality in \eqref{CS-inf} holds if $(T_0, \dotsc, T_m)$ is proportional to $(\alpha_0, \dotsc, \alpha_m)$. 
This proportionality can be attained for a suitable choice of $\lambda = (\lambda_1, \dotsc, \lambda_m) \in\R^m$.
To this end we rewrite \eqref{Tj} as 
\[ T_j = \sum_{r=0}^m \left( \sum_{\abso{p}+i=r} a_p \Big(-\frac12\Big)^i  \right) \lambda_{m-r} = \sum_{r=0}^m w_r^{(j)}\lambda_{m-r} = \sum_{k=1}^m Q_{j,k} \lambda_k + b_j, \]
with $p\in\N_0^j$, the matrix $Q_{j,k} := w_{m-k}^{(j)}$, and the vector $b_j := w_m^{(j)}$ for $j=0 ,\dotsc, m$ and $k=1,\dotsc, m$. The quadratic form in $\lambda$,
\[ \norm{T}^2 = \norm{Q\lambda+b}^2 \]
has a unique minimizer $\lambda^*\in\R^m$ because the columns of the matrix $Q\in\R^{m+1,m}$ are linearly independent.

To see this, suppose that $Q\beta = 0$ for some $\beta = (\beta_1, \dotsc, \beta_m)\in\R^m$, i.e.,
\[ \sum_{r=0}^{m-1} \beta_{m-r} w_r^{(j)} = 0 \quad \mbox{for } j=0 ,\dotsc, m. \]
If we show that $q_r(j) := w_r^{(j)}$ is a polynomial in $j$ of degree $r$ (defined via \eqref{wejoterr} at the grid points $j=0,...,m$), $\beta=0$ will follow because the $q_r$ then are linearly independent polynomials. 
The polynomial structure of $q_r(j)$ is seen from the following combinatorial argument: For each fixed $r\in\{0,...,m-1\}$, the highest number of identical contributions in the sum of \eqref{wejoterr} is obtained when $i=0$ and if the vector $p\in\N_0^j$ has exactly $r$ times the entry $1$ and $j-r$ times the entry $0$.  There are 
$$
  \binom{j}{r} = \frac{j(j-1)...(j-r-1)}{r!},
$$
(i.e.\ a polynomial of degree $r$ in $j$) terms of this sort, with the value $a_1^r a_0^{j-r} (-\frac12)^0$. All other terms in \eqref{wejoterr} occur in a lower power of $j$. 
Hence, $Q$ indeed has full column rank.

At the minimum of $\norm{Q \lambda + b}^2$ the gradient $2Q^T(Q\lambda^*+b)$ vanishes, which means that $T^* := Q\lambda^*+b\in\R^{m+1}$ is in the one-dimensional subspace orthogonal to the columns of $Q$. From \eqref{eq10} and \eqref{Wejoterr} we obtain
\begin{align}\label{eq11}
    \sum_{j=0}^m \alpha_j w^{(j)}_r &= 0 \qquad \text{for }0 \le r \le m-1, 
\end{align}
which means that $\alpha\in\R^{m+1}$ is orthogonal to the columns of $Q$ and hence proportional to $T^*$.
Hence $T^*$ makes \eqref{CS-inf} an equality, and the corresponding $\lambda^*$ minimizes \eqref{infformula}.
\end{proof}

\begin{lemma}
\label{mylem4.2}
Let $m \in \N_0$ and let $y \colon \R^+_0 \to \HH$ satisfy $y(\tau) \ne 0$ on some interval $[0,t_0)$, $\norm{y(\tau)}=\cO(1)$ and
\begin{equation}\label{eq1}
    \norm{\sqrt{B_H}B_c^n y(\tau)} = o(\tau^{m-1-n)})
\end{equation}  
as $\tau \to 0+$ for $0 \le n < m$.(For $m=0$ the second condition is void.) We also note that $y$ does not even have to be continuous. Then
\begin{multline}
\left\lVert B_d(\tau)^{m+1} \frac{y(\tau)}{\norm{y(\tau)}}\right\rVert^2 = 1 - \frac{2\tau}{\norm{y(\tau)}^2} \sum_{j=0}^m \sum_{\abso{p}+i=0}^m \sum_{\abso{q}+l=0}^m \tau^{\abso{p}+i+\abso{q}+l} a_p a_q \left(-\tfrac 1 2\right)^{i+l} \label{eq5}\\
\nonumber\cdot \left\langle \sqrt{B_H} B_c^{\abso{q} + l} y(\tau), \sqrt{B_H} B_c^{\abso{p} + i} y(\tau) \right\rangle + o(\tau^{2m+1})
\end{multline}
for $\tau \to 0+$.
Just as in Lemma \ref{mylem4.1}, the second sum above is actually a $(j+1)$-fold summation, over the $j$ components of the vector $p$ and the scalar index $i\in\N_0$. The same holds for the third sum.
\end{lemma}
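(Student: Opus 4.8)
The plan is to mirror the proof of Lemma~\ref{lem_propagator_simplification}, replacing the Cauchy-product expansion of $e^{-B_c^*t}e^{-B_ct}$ by the identity \eqref{formel} of Lemma~\ref{mylem5.1}. Throughout I fix $\tau<2/\norm{B_c}$, so that $B_d(\tau)$ is well-defined, the Neumann-type series recalled after Lemma~\ref{mylem5.1} converge in operator norm, and all rational expressions in $B_c$ occurring below commute (Remark~\ref{rem:commute}). First, since $B_H=\sqrt{B_H}\,\sqrt{B_H}$ with $\sqrt{B_H}$ self-adjoint, testing \eqref{formel} against $y(\tau)$ gives
\[
  \norm{B_d(\tau)^{m+1}y(\tau)}^2=\norm{y(\tau)}^2-2\tau\sum_{j=0}^m\norm{w_j(\tau)}^2,\qquad w_j(\tau):=\sqrt{B_H}\,\bigl(I+\tfrac\tau2 B_c\bigr)^{-1}B_d(\tau)^j\,y(\tau).
\]
Next I expand each $w_j$: the $(j+1)$-fold Cauchy product of the series for $(I+\tfrac\tau2 B_c)^{-1}$ and of the $j$ copies of $B_d(\tau)=\sum_i a_i\tau^iB_c^i$ (with $a_i$ from \eqref{a-def}) yields
\[
  w_j(\tau)=\sum_{p\in\N_0^j}\ \sum_{i\in\N_0}a_p\Bigl(-\tfrac12\Bigr)^i\tau^{\abso p+i}\sqrt{B_H}B_c^{\abso p+i}y(\tau),
\]
the series converging absolutely in $\HH$.

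The core step is to split $w_j=u_j+v_j$, where $u_j$ collects the terms with $\abso p+i\le m$ and $v_j$ those with $\abso p+i>m$, and to show that $v_j$ contributes only to the remainder. Two estimates achieve this. For the tail I use $\abso{a_p}\le1$, $\abso{(-1/2)^i}\le1$, $\norm{\sqrt{B_H}B_c^n}\le\norm{\sqrt{B_H}}\norm{B_c}^n$, and the fact that $\binom{n+j}{j}$ equals the number of pairs $(p,i)\in\N_0^j\times\N_0$ with $\abso p+i=n$, to obtain $\norm{v_j(\tau)}\le\norm{\sqrt{B_H}}\,\norm{y(\tau)}\sum_{n>m}\binom{n+j}{j}(\norm{B_c}\tau)^n=\cO(\tau^{m+1})$, because the residual series converges for $\tau$ small. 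For the head I use that, when $m\ge1$, the degree-$n$ term of $u_j$ ($n=\abso p+i$) has $\HH$-norm $\tau^n\cdot\cO(\norm{\sqrt{B_H}B_c^ny(\tau)})$, which by hypothesis \eqref{eq1} is $o(\tau^{m-1})$ for $0\le n<m$ and $\cO(\tau^m)=o(\tau^{m-1})$ for $n=m$; summing the finitely many such terms gives $\norm{u_j(\tau)}=o(\tau^{m-1})$ (for $m=0$ there is nothing to do, since then $u_j=\sqrt{B_H}y(\tau)$). Combining, $\norm{u_j}\norm{v_j}=o(\tau^{m-1})\,\cO(\tau^{m+1})=o(\tau^{2m})$ and $\norm{v_j}^2=\cO(\tau^{2m+2})$, hence $\norm{w_j}^2=\norm{u_j}^2+2\Re\langle u_j,v_j\rangle+\norm{v_j}^2=\norm{u_j}^2+o(\tau^{2m})$ and therefore $2\tau\sum_{j=0}^m\norm{w_j}^2=2\tau\sum_{j=0}^m\norm{u_j}^2+o(\tau^{2m+1})$.

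Finally, since the scalars $a_p(-1/2)^i\tau^{\abso p+i}$ are real, expanding $\norm{u_j(\tau)}^2=\langle u_j,u_j\rangle$ and relabelling the two multi-indices so that the conjugated factor carries the degree $\abso p+i$ (the convention of \eqref{eq5}; cf.\ Remark~\ref{rem:sum}) produces exactly the $j$-th summand of \eqref{eq5}; dividing the identity of the first paragraph by $\norm{y(\tau)}^2$ then yields the claim. I expect the only genuinely non-routine point to be the tail estimate for $v_j$: one must see that truncating the expansion of $w_j$ at degree $m$ costs only $\cO(\tau^{m+1})$, which is precisely the combinatorial count together with the geometric convergence of $\sum_n\binom{n+j}{j}(\norm{B_c}\tau)^n$; and one must keep the little-$o$ (not merely big-$\cO$) through the head estimate so that the final error is $o(\tau^{2m+1})$ rather than $\cO(\tau^{2m+1})$ — this is why \eqref{eq1} is needed in its stated sharp form rather than a crude operator bound. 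The remaining bookkeeping (which indices survive, and matching the sign/conjugation conventions of \eqref{eq5}) is the exact discrete analog of the sum reductions in the proof of Lemma~\ref{lem_propagator_simplification}.
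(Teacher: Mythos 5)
Your proposal is correct and follows the paper's own route: the identity of Lemma~\ref{mylem5.1}, the series expansions of $(I+\tfrac\tau2 B_c)^{-1}$ and $B_d(\tau)$ combined by Cauchy products, and the sharp hypothesis \eqref{eq1} to discard every contribution of total degree exceeding $2m$. The only difference is bookkeeping: you rewrite each summand as $\bigl\lVert \sqrt{B_H}\,(I+\tfrac\tau2 B_c)^{-1}B_d(\tau)^j y(\tau)\bigr\rVert^2$ and split that vector into a head ($\abso{p}+i\le m$, of size $o(\tau^{m-1})$) and a tail of size $\mathcal O(\tau^{m+1})$, whereas the paper expands the full quadratic form to order $\tau^{2m}$ with an operator-norm remainder $\mathcal O(\tau^{2m+2})$ and then drops the terms with $\abso{p}+i>m$ or $\abso{q}+l>m$ using \eqref{eq1} on the complementary low-degree factor; the resulting estimates and the retained double sum coincide.
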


\begin{proof}
We use Lemma \ref{mylem5.1} and the series expansions mentioned thereafter to write
\begin{equation}\label{BByy}
  \left\langle (B_d^*(\tau))^{m+1}B_d(\tau)^{m+1} \frac{y(\tau)}{\norm{y(\tau)}}, \frac{y(\tau)}{\norm{y(\tau)}} \right\rangle 
\end{equation}
as the Cauchy product of the respective series, resulting in 
\begin{gather}
\nonumber   \eqref{BByy} = 1 - \frac{2\tau}{\norm{y(\tau)}^2} \sum_{j=0}^m \sum_{r=0}^{2m} \sum_{\abso{p} + i + l + \abso{q} = r} a_p a_q \left(-\tfrac 1 2\right)^{i+l} \left\langle (B_c^*)^{\abso{p} + i} B_H B_c^{\abso{q} + l} y(\tau), y(\tau) \right\rangle \tau^r + \cO(\tau^{2m+2}) \\
\label{eq2}    = 1 - \frac{2\tau}{\norm{y(\tau)}^2} \sum_{j=0}^m \sum_{r=0}^{2m} \sum_{\abso{p} + i + l + \abso{q} = r} a_p a_q \left(-\tfrac 1 2\right)^{i+l} \left\langle \sqrt{B_H} B_c^{\abso{q} + l} y(\tau), \sqrt{B_H} B_c^{\abso{p} + i} y(\tau) \right\rangle \tau^r + \cO(\tau^{2m+2}). 
\end{gather} 
In the above sum, consider now for every $r=0,\dotsc ,2m$ the term
\[ \tau^{r+1} \left\langle \sqrt{B_H} B_c^{\abso{q} + l} y(\tau), \sqrt{B_H} B_c^{\abso{p} + i} y(\tau) \right\rangle. \]
By \eqref{eq1}, if $\abso{q}+l > m$ (and hence $\abso{p}+i < m$) this term is of order $\tau^{r+1}\cO(1) o(\tau^{m-1-\abso{p}-i}) = o(\tau^{m+\abso{q}+l}) = o(\tau^{2m+1})$, and similarly if $\abso{p}+i > m$. Hence, \eqref{eq2} becomes
\begin{multline*}
1 - \frac{2\tau}{\norm{y(\tau)}^2} \sum_{j=0}^m \sum_{\abso{p}+i=0}^m \sum_{\abso{q}+l=0}^m \tau^{\abso{p}+i+\abso{q}+l} a_p a_q \left(-\tfrac 1 2\right)^{i+l} \\
\cdot \left\langle \sqrt{B_H} B_c^{\abso{q} + l} y(\tau), \sqrt{B_H} B_c^{\abso{p} + i} y(\tau) \right\rangle + o(\tau^{2m+1}).
\end{multline*}
\end{proof}

The following theorem yields the (as it turns out) optimal lower bound for the asymptotic expansion of $\norm{ B_d(\tau)^{m+1}}$, the norm of the scaled Cayley transform, as a function of the step size and in the limit  $\tau\to 0$. Thus it is the discrete analogue of Theorem \ref{operator_norm_lower_bound}. Again, we essentially assume that the HC-index of $B_c$ is larger or equal to some (fixed) integer $m$.
\begin{theorem}\label{mythm4.3}
Let $-B_c\in \mathcal B(\HH)$ be semi-dissipative, and let $B_H$ be the Hermitian part of $B_c$.
Suppose that, for some $m \in \N_0$,  the coercivity condition
\begin{align}\label{eq:coercivity_assumption2}
    \sum_{j=0}^{m-1} (B_c^*)^j B_H B_c^j \geq \kappa I
\end{align}
is \emph{violated} for all $\kappa > 0$. Then
\[ \norm{ B_d(\tau)^{m+1}}^2 \ge 1 - \tau^{2m+1} \frac{2}{\binom{2m}{m}} \lim_{\delta \to 0} \inf_{\substack{\norm{\sqrt{B_H}B_c^py}\le \delta,\\0 \le p < m,\\ \norm{y}=1}} \norm{\sqrt{B_H}B_c^my }^2 + o(\tau^{2m+1}) \]
for $\tau \to 0+$.

We recall from Theorem \ref{operator_norm_lower_bound} that, for $m=0$ the infimum is just taken over $\SS$.
\end{theorem}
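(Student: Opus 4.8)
The plan is to follow the proof of Theorem~\ref{operator_norm_lower_bound} almost verbatim, with Lemma~\ref{mylem4.1} taking the role of Lemma~\ref{min_of_quadratic_form} and Lemma~\ref{mylem4.2} the role of Lemma~\ref{lem_propagator_simplification}. First I would fix, for each sufficiently small $\tau>0$, the set $S_\tau \coloneqq \{\,y\in\SS : \|\sqrt{B_H}B_c^p y\|\le\tau^{m+1-p},\ 0\le p<m\,\}$, which is nonempty because the failure of \eqref{eq:coercivity_assumption2} for every $\kappa>0$ forces $\inf_{\|y\|=1}\sum_{p=0}^{m-1}\|\sqrt{B_H}B_c^p y\|^2 = 0$, so a unit vector with each $\|\sqrt{B_H}B_c^p y\|$ below the smallest of the thresholds $\tau^{m+1-p}$ exists. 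Picking $y_0(\tau)\in S_\tau$ with $\bigl|\,\|\sqrt{B_H}B_c^m y_0(\tau)\|-\inf_{y\in S_\tau}\|\sqrt{B_H}B_c^m y\|\,\bigr|<\tau$, I would record, exactly as in Theorem~\ref{operator_norm_lower_bound}, the decay estimates $\|\sqrt{B_H}B_c^p y_0(\tau)\|=\mathcal O(\tau^{m+1-p})$ for $0\le p<m$, the cruder bound $\|\sqrt{B_H}B_c^p y_0(\tau)\|=\mathcal O(\tau^{m-p})$ for every $p\in\N_0$, and the limit identity $\lim_{\tau\to0}\|\sqrt{B_H}B_c^m y_0(\tau)\|^2 = \lim_{\delta\to0}\inf_{\|\sqrt{B_H}B_c^p y\|\le\delta,\ 0\le p<m,\ \|y\|=1}\|\sqrt{B_H}B_c^m y\|^2$ (via monotonicity in $\tau$ and $\delta$, boundedness by $\|\sqrt{B_H}B_c^m\|^2$, and the sandwich between $\delta=\tau^2$ and $\delta=\tau^{m+1}$).

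Next I would take $\lambda_1,\dots,\lambda_m\in\R$ to be the unique minimizer from Lemma~\ref{mylem4.1} (with $\lambda_0=1$) and set $y(\tau)\coloneqq\sum_{s=0}^{m}\lambda_s\,(\tau B_c)^s y_0(\tau)$. Since $\|B_c\|<\infty$, the decay estimates give $\|y(\tau)\|=1+\mathcal O(\tau)$ (so $y(\tau)\ne0$ near $0$) and $\|\sqrt{B_H}B_c^n y(\tau)\|=\mathcal O(\tau^{m-n})=o(\tau^{m-1-n})$ for $0\le n<m$, i.e.\ $y(\tau)$ satisfies the hypotheses of Lemma~\ref{mylem4.2}. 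Applying that lemma and then expanding each occurrence $\sqrt{B_H}B_c^{|q|+l}y(\tau)=\sum_s\lambda_s\tau^s\sqrt{B_H}B_c^{|q|+l+s}y_0(\tau)$ (and likewise on the other side, with a new summation index $t$) turns the triple sum of Lemma~\ref{mylem4.2} into a sum over $j,p,i,q,l,s,t$, each term carrying a prefactor $\tau^{1+|p|+i+|q|+l+s+t}$ times $\langle\sqrt{B_H}B_c^{|q|+l+s}y_0(\tau),\sqrt{B_H}B_c^{|p|+i+t}y_0(\tau)\rangle$.

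The crux is the $\tau$-order bookkeeping. Using $\|\sqrt{B_H}B_c^a y_0(\tau)\|=\mathcal O(\tau^{m+1-a})$ for $a<m$ and $\mathcal O(1)$ for $a\ge m$, a short case distinction (each of $|q|+l+s$ and $|p|+i+t$ being $<m$, $=m$ or $>m$) shows that every term with $|q|+l+s\ne m$ or $|p|+i+t\ne m$ has total order at least $\tau^{2m+2}$ and is absorbed into the $o(\tau^{2m+1})$ remainder, while the surviving terms (those with $|q|+l+s=m=|p|+i+t$, for which $s$ and $t$ are uniquely determined by $j,p,i,q,l$) all carry the factor $\tau^{2m+1}\|\sqrt{B_H}B_c^m y_0(\tau)\|^2$ times the coefficient $a_p a_q(-\tfrac12)^{i+l}\lambda_{m-|q|-l}\lambda_{m-|p|-i}$; summing these over $j$ and over $|p|+i\le m$, $|q|+l\le m$ reproduces exactly the bilinear form of Lemma~\ref{mylem4.1}. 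By the minimizing choice of $\lambda$ that bilinear form equals $\binom{2m}{m}^{-1}$, so, using $1/\|y(\tau)\|^2=1+\mathcal O(\tau)$ and the limit identity to replace $\|\sqrt{B_H}B_c^m y_0(\tau)\|^2$ by its limit, one arrives at $\bigl\|B_d(\tau)^{m+1}\,y(\tau)/\|y(\tau)\|\bigr\|^2 = 1-\tau^{2m+1}\tfrac{2}{\binom{2m}{m}}\lim_{\delta\to0}\inf_{\|\sqrt{B_H}B_c^p y\|\le\delta,\ 0\le p<m,\ \|y\|=1}\|\sqrt{B_H}B_c^m y\|^2+o(\tau^{2m+1})$. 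Since $\|B_d(\tau)^{m+1}\|^2$ dominates the left-hand side, the theorem follows; in the degenerate case $m=0$ one has $S_\tau=\SS$, $y(\tau)=y_0(\tau)$, and the claim reduces immediately to Lemma~\ref{mylem4.2}.

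I expect the main obstacle to be precisely this order-counting: in contrast to the continuous-time proof, where each factor contributes a single exponent, here every $B_d$ is a full power series and there are two additional resolvent factors, so the cancellation structure must be tracked through the multi-indices $p,q\in\N_0^j$ and the scalars $i,l,s,t$. Once this case split is carried out carefully, the remainder of the argument is a direct transcription of the proof of Theorem~\ref{operator_norm_lower_bound}.
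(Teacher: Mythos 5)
Your proposal is correct and follows essentially the same route as the paper's own proof: the same sets $S_\tau$ and near-infimizers $y_0(\tau)$, the same ansatz $y(\tau)=\sum_{s=0}^m\lambda_s(\tau B_c)^s y_0(\tau)$ with the minimizer from Lemma~\ref{mylem4.1}, the expansion via Lemma~\ref{mylem4.2}, and the identical case split showing that only the terms with both total exponents equal to $m$ survive at order $\tau^{2m+1}$. The order bookkeeping you flag as the main obstacle is exactly how the paper handles it, so no changes are needed.
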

\begin{proof}
As in Theorem \ref{operator_norm_lower_bound} we set
\[ S_\tau \coleq \{ y  \in \SS: \norm{\sqrt{B_H}B_c^p y} \le \tau^{m+1-p},\ 0 \le p < m \} \]
and choose for each $\tau>0$ some $y_0(\tau) \in S_\tau$ with
\[ \left\vert \norm{\sqrt{B_H} B_c^m y_0(\tau)} - \inf_{y \in S_\tau} \norm{\sqrt{B_H}B_c^m y} \right\vert < \tau, \]
resulting in
\begin{align}
\label{do1}    \norm{\sqrt{B_H}B_c^p y_0(\tau)} &= \cO(\tau^{m+1-p}) & & (0 \le p < m)  \,, \\
\label{do2}    \norm{\sqrt{B_H}B_c^p y_0(\tau)} &= \cO(\tau^{m-p}) & & (p \in \N_0).
\end{align}
From \eqref{eq:infimum_convergence} we recall the identity
\[ \lim_{\tau \to 0} \norm{\sqrt{B_H} B^m_c y_0(\tau)} = \lim_{\tau \to 0} \inf_{y \in S_\tau} \norm{\sqrt{B_H}B_c^m y} = \lim_{\delta \to 0} \inf_{\substack{\norm{\sqrt{B_H}B_c^py}\le \delta,\\0 \le p < m,\\ \norm{y}=1}} \norm{\sqrt{B_H}B_c^my }. \]
We now use the ansatz
\[ y(\tau) \coleq y_0(\tau) + \sum_{s=1}^m \lambda_s(\tau B_c)^s y_0(\tau) = \sum_{s=0}^m \lambda_s(\tau B_c)^s y_0(\tau),\quad \tau \ge 0, \]
where $\lambda_0=1$, and $\lambda_1, \dotsc, \lambda_m\in\R$ are to be chosen later. Equations \eqref{do1} and \eqref{do2} translate into 
\begin{equation}\label{do3}
    \norm{\sqrt{B_H}B_c^n(\tau B_c)^s y_0(\tau)} = \cO(\tau^{m+1-n})
\end{equation}
for $0 \le s,n$ with $s+n<m$; and
\begin{equation}\label{do4}
    \norm{\sqrt{B_H}B_c^n(\tau B_c)^s y_0(\tau)} = \cO(\tau^{m-n})
\end{equation}
for any $s,n \in \N_0$. Hence, $y(\tau)$ satisfies all conditions of Lemma \ref{mylem4.2} and we obtain
\begin{align}
    \nonumber\left\lVert B_d(\tau)^{m+1} \frac{y(\tau)}{\norm{y(\tau)}} \right\rVert & = 1 - \frac{2\tau}{\norm{y(\tau)}^2} \sum_{j=0}^m \sum_{\abso{p}+i=0}^m \sum_{\abso{q}+l=0}^m \tau^{\abso{p}+i+\abso{q}+l} a_p a_q \left(-\frac 1 2\right)^{i+l} \\ 
    \label{eq3} & \cdot \left\langle \sqrt{B_H} B_c^{\abso{q} + l} y(\tau), \sqrt{B_H} B_c^{\abso{p} + i} y(\tau) \right\rangle+ o(\tau^{2m+1}).
\end{align}
To expand the inner product
\[ \sum_{r=0}^m \sum_{s=0}^m \left\langle \sqrt{B_H} B_c^{\abso{q} + l} (\tau B_c)^r y_0(\tau), \sqrt{B_H} B_c^{\abso{p} + i} (\tau B_c)^s y_0(\tau) \right\rangle \]
we distinguish the following cases:

\underline{$r+\abso{q}+l<m$:} The above estimates \eqref{do3}, \eqref{do4} in the form
\begin{align*}
\norm{\sqrt{B_H}B_c^{\abso{q} + l}(\tau B_c)^r y_0(\tau)} &= \cO(\tau^{m+1-\abso{q}-l}) \\
\norm{\sqrt{B_H}B_c^{\abso{p} + i}(\tau B_c)^s y_0(\tau)} &= \cO(\tau^{m-\abso{p}-i})
\end{align*}
yield
\[ \tau^{1+\abso{p}+i+\abso{q}+l} \left\langle \sqrt{B_H} B_c^{\abso{q} + l} (\tau B_c)^r y_0(\tau), \sqrt{B_H} B_c^{\abso{p} + i} (\tau B_c)^s y_0(\tau) \right\rangle = \cO(\tau^{2m+2}). \]

\underline{$s+\abso{p}+i<m$:} The same estimate holds.

\underline{$r+\abso{q}+l \ge m+1$ and $s+\abso{p}+i \ge m$} or \underline{$r+\abso{q}+l \ge m$ and $s+\abso{p}+i \ge m+1$:}
\begin{multline*}
    \tau^{1+\abso{p}+i+\abso{q}+l} \left\langle \sqrt{B_H} B_c^{\abso{q} + l} (\tau B_c)^r y_0(\tau), \sqrt{B_H} B_c^{\abso{p} + i} (\tau B_c)^s y_0(\tau) \right\rangle \\
    = \tau^{1+s+\abso{p}+i+r+\abso{q}+l} \left\langle \sqrt{B_H} B_c^{\abso{q} + l+r} y_0(\tau), \sqrt{B_H} B_c^{\abso{p} + i+s} y_0(\tau) \right\rangle = \cO(\tau^{2m+2})
\end{multline*}
using $\norm{\sqrt{B_H}B_c^{r+\abso{q}+l}y(\tau)} = \cO(1)$ and $\norm{\sqrt{B_H}B_c^{s+\abso{p}+i}y(\tau)} = \cO(1)$.

Hence, we only need to retain the terms with $s+\abso{p}+i = r + \abso{q}+l = m$ in the estimate of \eqref{eq3}, and it simplifies as follows:
\begin{align*}
    \left\lVert B_d(\tau)^{m+1} \frac{y(\tau)}{\norm{y(\tau)}} \right\rVert^2 & = 1 - \frac{2\tau}{\norm{y(\tau)}^2} \sum_{j=0}^m \sum_{\abso{p}+i=0}^m \sum_{\abso{q}+l=0}^m \sum_{r=0}^m \sum_{s=0}^m \tau^{\abso{p}+i+\abso{q}+l+r+s} \lambda_r {\lambda_s} a_p a_q \left(-\frac 1 2\right)^{i+l} \\ 
    & \cdot \left\langle \sqrt{B_H} B_c^{\abso{q} + l +r } y_0(\tau), \sqrt{B_H} B_c^{\abso{p} + i +s } y_0(\tau) \right\rangle + o(\tau^{2m+1}) \\
    &= 1 - 2 \tau^{2m+1} \sum_{j=0}^m \sum_{\abso{p}+i=0}^m \sum_{\abso{q}+l=0}^m\lambda_{m-\abso{q}-l} {\lambda_{m-\abso{p}-i}} a_p a_q \left(-\frac 1 2\right)^{i+l} \\ 
    & \cdot \norm{\sqrt{B_H}B_c^my_0(\tau)}^2 + o(\tau^{2m+1}) , \\
\end{align*}
where we used $\norm{y(\tau)}^{-2} = 1 + \cO(\tau)$.  
Choose $\lambda_1, \dotsc, \lambda_m$ according to Lemma \ref{mylem4.1} such that the expression
\[ \sum_{j=0}^m \sum_{\abso{p}+i=0}^m \sum_{\abso{q}+l=0}^m\lambda_{m-\abso{q}-l} \lambda_{m-\abso{p}-i} a_p a_q \left(-\frac 1 2\right)^{i+l} \]
is minimal and thus equals $\binom{2m}{m}^{-1}$. Then, 
\[ \left\lVert B_d(\tau)^{m+1} \frac{y(\tau)}{\norm{y(\tau)}} \right\rVert^2 = 1 - \tau^{2m+1} \frac{2}{\binom{2m}{m}} \norm{\sqrt{B_H} B_c^m y_0(\tau)}^2 + o(\tau^{2m+1}), \]
and hence using \eqref{eq:infimum_convergence}:
\[ \norm{ B_d(\tau)^{m+1}}^2 \ge \norm{ B_d(\tau)^{m+1} \frac{y(\tau)}{\norm{y(\tau)}} }^2 = 1 - \tau^{2m+1} \frac{2}{\binom{2m}{m}} \lim_{\delta \to 0} \inf_{\substack{\norm{\sqrt{B_H}B_c^py}\le \delta,\\0 \le p < m,\\ \norm{y}=1}} \norm{\sqrt{B_H}B_c^my }^2 + o(\tau^{2m+1}). \qedhere \]
\end{proof}

\begin{remark}
\label{myrem4.4}
If $B_c$ has hypocoercivity index greater than or equal to $m \in \N_0$, then $\norm{B_d(\tau)^{m+1}}= 1 - \cO(\tau^{2m+1})$ by Theorem \ref{mythm4.3}.
\end{remark}

The following two lemmas are technical ingredients for proving  the optimal upper bound on $\|B_d(\tau)^{m+1}\|$ in Theorem \ref{mythm4.7}.

\begin{lemma}
\label{mylem4.5}
Let $k \in \N_0$. Then, for every $d = 0 ,\dotsc, k$ 
it holds
\begin{equation}\label{mylem4.5.eq}
C^{(k)}_d \coleq \inf_{\substack{z \in \HH^{k+1}\\ z_d \ne 0}} \sum_{j=0}^k \sum_{\abso{p} + i = 0}^k  \sum_{\abso{q} + l = 0}^k a_p a_q \left(-\tfrac{1}{2}\right)^{i+l} \left\langle z_{|q| + l}, z_{|p| + i}\right\rangle \norm{z_d}^{-2} > 0,
\end{equation}
where $z = (z_0, \dotsc, z_k )\in \HH^{k+1}$ and $p,\,q\in\N_0^j$.
Moreover, we have $C_k^{(k)} = \binom{2k}{k}^{-1}$.
\end{lemma}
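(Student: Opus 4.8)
The plan is to reduce \eqref{mylem4.5.eq} to a statement about an invertible $(k+1)\times(k+1)$ matrix, re-using the generating-function bookkeeping from the proof of Lemma~\ref{mylem4.1}. First I would observe that, for each fixed $j\in\{0,\dots,k\}$, the inner double sum collapses to a perfect square. Indeed, with the coefficients $w_r^{(j)}$ of \eqref{wejoterr} one has $\sum_{\abso{p}+i=r}a_p(-\tfrac12)^i=w_r^{(j)}$, and since the $w_r^{(j)}$ are real,
\[
  \sum_{\abso{p}+i=0}^k\sum_{\abso{q}+l=0}^k a_p a_q\Big(-\tfrac12\Big)^{i+l}\big\langle z_{\abso{q}+l},z_{\abso{p}+i}\big\rangle=\Big\|\sum_{r=0}^k w_r^{(j)}z_r\Big\|^2=:\norm{\zeta_j}^2 .
\]
(As in Remark~\ref{rem:sum}, the two middle sums are really a $(2j+2)$-fold Cauchy-product sum over the components of $p,q$ and the scalars $i,l$.) Hence the quantity whose infimum defines $C_d^{(k)}$ equals $\big(\sum_{j=0}^k\norm{\zeta_j}^2\big)/\norm{z_d}^2$, where $\zeta=(\zeta_0,\dots,\zeta_k)$ is the image of $z=(z_0,\dots,z_k)$ under $W_k\otimes\Id_{\HH}$, with $W_k\coleq\big[w_r^{(j)}\big]_{j,r=0}^k\in\R^{(k+1)\times(k+1)}$.

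Next I would transport the key structural fact established inside the proof of Lemma~\ref{mylem4.1}: $j\mapsto w_r^{(j)}$ is a polynomial in $j$ of degree exactly $r$, for $r=0,\dots,k$. Thus $w_0^{(\cdot)},\dots,w_k^{(\cdot)}$ are linearly independent polynomials of degree $\le k$, and since evaluation at the $k+1$ distinct nodes $j=0,\dots,k$ is injective on the space of polynomials of degree $\le k$, the columns of $W_k$ form a basis of $\R^{k+1}$; in particular $W_k$ is invertible. Because tensoring with the identity on $\HH$ leaves the operator norm unchanged, $\sum_{j=0}^k\norm{\zeta_j}^2=\norm{(W_k\otimes\Id_{\HH})z}^2\ge\norm{W_k^{-1}}^{-2}\sum_{r=0}^k\norm{z_r}^2\ge\norm{W_k^{-1}}^{-2}\norm{z_d}^2$, and hence $C_d^{(k)}\ge\norm{W_k^{-1}}^{-2}>0$ for every $d=0,\dots,k$.

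Finally, for the explicit value $C_k^{(k)}=\binom{2k}{k}^{-1}$ I would use the vector $\alpha=(\alpha_0,\dots,\alpha_k)$ with $\alpha_j=(-1)^j\binom{k}{j}$. Equations \eqref{eq10} and \eqref{eq11} (with $m$ replaced by $k$) give $\sum_{j=0}^k\alpha_j w_r^{(j)}=\delta_{r,k}$ for $0\le r\le k$, whence $\sum_{j=0}^k\alpha_j\zeta_j=z_k$. By the Cauchy--Schwarz inequality in $\HH$,
\[
  \norm{z_k}=\Big\|\sum_{j=0}^k\alpha_j\zeta_j\Big\|\le\Big(\sum_{j=0}^k\alpha_j^2\Big)^{1/2}\Big(\sum_{j=0}^k\norm{\zeta_j}^2\Big)^{1/2}=\binom{2k}{k}^{1/2}\Big(\sum_{j=0}^k\norm{\zeta_j}^2\Big)^{1/2},
\]
so $C_k^{(k)}\ge\binom{2k}{k}^{-1}$. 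For the matching upper bound I would specialize: let $(\lambda_1^*,\dots,\lambda_k^*)$ together with $\lambda_0^*=1$ be the minimizer of \eqref{infformula} for $m=k$ (Lemma~\ref{mylem4.1}), fix a unit vector $v\in\HH$, and set $z_r\coleq\lambda^*_{k-r}\,v$ for $r=0,\dots,k$. Then $z_k=v$ has norm $1$, while $\zeta_j=\big(\sum_{r=0}^k w_r^{(j)}\lambda^*_{k-r}\big)v$, so that $\sum_{j=0}^k\norm{\zeta_j}^2$ equals the minimal value $\binom{2k}{k}^{-1}$ of \eqref{infformula}; hence $C_k^{(k)}\le\binom{2k}{k}^{-1}$, and the two bounds combine to the claimed equality.

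The main obstacle is organizational rather than conceptual: one must expand the multi-fold Cauchy product carefully to justify the collapse to $\sum_j\norm{\zeta_j}^2$, and one must make sure the polynomial-degree argument of Lemma~\ref{mylem4.1} is applied so as to yield invertibility of the \emph{full square} matrix $W_k$ (the proof of Lemma~\ref{mylem4.1} only needed full column rank of a rectangular submatrix). Once invertibility of $W_k$ is in hand, positivity of every $C_d^{(k)}$ is immediate, and the exact value $C_k^{(k)}=\binom{2k}{k}^{-1}$ follows from the Cauchy--Schwarz sandwich above.
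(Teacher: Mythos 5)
Your proposal is correct and follows essentially the same route as the paper's proof: the collapse of the inner double sum to $\bigl\|\sum_{r=0}^k w_r^{(j)} z_r\bigr\|^2$, invertibility of the coefficient matrix $W=\bigl(w_r^{(j)}\bigr)_{j,r=0,\dots,k}$ via the polynomial-in-$j$ degree argument from Lemma~\ref{mylem4.1}, and the vector $\alpha_j=(-1)^j\binom{k}{j}$ with $W^T\alpha=e_k$ combined with Cauchy--Schwarz to pin down $C_k^{(k)}=\binom{2k}{k}^{-1}$. The only deviations are minor and both valid: you replace the paper's orthogonal-projection reduction to collinear vectors by the direct bound $\|(W\otimes\Id_{\HH})z\|\ge\|W^{-1}\|^{-1}\|z\|$ (equivalently $\lambda_{\min}(W^*W)>0$), and you establish attainment of the value $\binom{2k}{k}^{-1}$ by inserting the minimizer $\lambda^*$ of Lemma~\ref{mylem4.1} tensored with a unit vector, whereas the paper exhibits the optimizer $\tilde\mu=W^{-1}\alpha$ inside the scalar reduction.
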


\begin{proof}
First, we abbreviate for each $j=0 ,\dotsc, k$ and $z=(z_0,\dotsc,z_k) \in \HH^{k+1}$ 
\[
 M_j(z) := \sum_{\abso{p} + i = 0}^k \sum_{\abso{q} + l = 0}^k a_p a_q \left(-\tfrac{1}{2}\right)^{i+l} \left\langle z_{|q| + l}, z_{|p| + i}\right\rangle = \norm{\sum_{\abso{p}+i=0}^k a_p (-\tfrac12)^i  z_{\abso{p}+i}}^2,
\]
which shows that the expression on the left-hand side of \eqref{mylem4.5.eq} is real and nonnegative. Note that, for any unit vector $e \in \HH$ and $z_r^\parallel \coleq \langle z_r, e \rangle e$, $r = 0, \dotsc, k$, the inequality
\begin{equation*}
\sum_{j=0}^k M_j(z_0, \dotsc, z_k) \ge \sum_{j=0}^k M_j(z_0^\parallel, \dotsc, z_k^\parallel) \end{equation*}
holds as a general property of orthogonal projections, here onto $\sspan[e]$. Hence, the infimum of the left-hand side of \eqref{mylem4.5.eq} w.r.t.\ $z\in\HH^{k+1}$ with $z_d \ne 0$ equals the infimum over all collinear vectors $z_0,\dotsc, z_k$ with $z_d \ne 0$. Fix some $d\in\{0,\dots,k\}$, suppose that $z_d \ne 0$, and set $e := z_d/\norm{z_d}$, which implies that $\norm{z_d^\parallel} = \norm{z_d}$. Then we can restrict the minimization in \eqref{mylem4.5.eq} to the scalar case $z_r := \mu_r e$ with $\mu_r \in \C$, $r=0,...,k$ and write
\[ \sum_{j=0}^k M_j (z_0, \dotsc, z_k) = \sum_{j=0}^k \left|\sum_{r=0}^k w_r^{(j)} \mu_r\right|^2 = Q(\mu), \]
where $w^{(j)}_r$ was given in \eqref{wejoterr}, $\mu = (\mu_0, \dotsc, \mu_k)^T$, and $Q$ is the quadratic form given by $Q(\mu) = \norm{W\mu}^2$ with the matrix $W = (w^{(j)}_r)_{j,r=0 ,\dotsc, k}$. Because the entries of $W$ are obtained by evaluating the $k+1$ linearly independent polynomials $q_r(j) = w^{(j)}_r$, $r= 0, \dots, k$ (as functions of $j$) at the $k+1$ different points $j=0,\dots,k$ (see the proof of Lemma \ref{mylem4.1}), $W$ is invertible. Hence $Q$ is a positive definite quadratic form. It follows that
\[ \sum_{j=0}^k M_j(z) \norm{z_d}^{-2} = Q(\mu) \abso{\mu_d}^{-2} \]
and we obtain
\begin{equation}\label{Cdk}
    C^{(k)}_d = \inf_{\mu \in \C^{k+1}, \abso{\mu_d}=1} Q(\mu) \ge \lambda_{\min}(W^*W) > 0, 
\end{equation} 
where $\lambda_{\min}(W^*W)$ denotes the smallest eigenvalue of $W^*W$. This shows that all $C^{(k)}_d$ are positive.

We recall the definition of $T_j$ from \eqref{Tj}, with $m$ replaced now by $k$:
$$
  T_j(\lambda) := \sum_{\abso{p}+i=0}^k \lambda_{k-\abso{p}-i} a_p \left(-\tfrac 12\right)^i,\quad j = 0 ,\dotsc, k,
$$ 
where $\lambda =(\lambda_1,...,\lambda_k) \in\C^k$ and $\lambda_0=1$. At $\lambda=0$ this gives with \eqref{wejoterr} $T_j(0) = \sum_{|p|+i=k} a_p(-\tfrac12)^i=w_k^{(j)}; \: j = 0 ,\dotsc, k$. 
Then, for $d=k$ and $\alpha_j := (-1)^j \binom k j$, $j = 0,\dots,k$ as in the proof of Lemma \ref{mylem4.1} we know from \eqref{eq11} and \eqref{oben} that $W^T \alpha = e_k$ (the $k$-th column of the identity matrix on $\C^{k+1}$ when starting to count with column 0). Hence
\begin{equation}\label{Q-lower-bound}
    \abso{\mu_k}^2 = \abso{\alpha^T W \mu}^2 \le \norm{\alpha}^2 Q(\mu) \implies Q(\mu) \ge \frac{\abso{\mu_k}^2}{\norm{\alpha}^2} \quad \forall \mu \in \C^{k+1}. 
\end{equation}
Setting $\tilde\mu := W^{-1}\alpha$ we have $\abso{\tilde\mu_k}^2 = \norm{\alpha}^4$ and hence $Q(\tilde\mu)=\norm{\alpha}^2 = \abso{\tilde\mu_k}^2/\norm{\alpha}^2$. Thus the lower bound from \eqref{Q-lower-bound} is attained. Scaling $\tilde\mu$ by $\tilde\mu_k$, \eqref{Cdk} yields $C^{(k)}_k = \norm{\alpha}^{-2} = \binom{2k}{k}^{-1}$, where we have used \eqref{CS-inf}. 
\end{proof}

\begin{lemma}
\label{mylem4.6}
If $B_c$ has hypocoercivity index greater than or equal to $m \in \N_0$ and $y \colon \R^+_0 \to \HH$ satisfies $\norm{y(\tau)}=1$ for $\tau \ge 0$ and $\norm{B_d(\tau)^{m+1}y(\tau)} - \norm{B_d(\tau)^{m+1}} = o(\tau^{2m+1})$ for $\tau\to0+$. Then $\norm{\sqrt{B_H}B_c^p y(\tau)} = o(\tau^{m-1-p})$ for $0 \le p < m$.
\end{lemma}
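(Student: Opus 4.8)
The plan is to follow the inductive proof of its continuous-time analogue, Lemma~\ref{decay_properties}, with $B_d(\tau)^{m+1}$ playing the role of $e^{-B_ct}$ and with Lemma~\ref{mylem4.2} and Lemma~\ref{mylem4.5} replacing Lemma~\ref{lem_propagator_simplification} and Lemma~\ref{bilinear_form_minimization}. I would argue by induction on $m$. The base case $m=0$ is vacuous, since there is no index $p$ with $0\le p<0$. For the induction step, assume the statement for $m$ and suppose $B_c$ has hypocoercivity index at least $m+1$, $\norm{y(\tau)}=1$, and $\norm{B_d(\tau)^{m+2}y(\tau)}-\norm{B_d(\tau)^{m+2}}=o(\tau^{2m+3})$; one must then show $\norm{\sqrt{B_H}B_c^p y(\tau)}=o(\tau^{m-p})$ for $0\le p\le m$.

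The step I expect to be the main obstacle is that, unlike in the continuous-time case — where the hypothesis for index $m+1$ is literally the hypothesis for index $m$ with a stronger remainder, since $e^{-B_ct}$ carries no $m$-dependence — the induction hypothesis here concerns $B_d(\tau)^{m+1}$, whereas the hypothesis we are given concerns $B_d(\tau)^{m+2}$. I would bridge this gap by a short chain of inequalities. Since $B_H\ge0$, Lemma~\ref{mylem5.1} with $m=0$ shows $B_d(\tau)^*B_d(\tau)\le I$, so $B_d(\tau)$ is semi-contractive; in particular $\norm{B_d(\tau)^j}\le1$ for all $j$ and $\norm{B_d(\tau)^{m+2}y(\tau)}\le\norm{B_d(\tau)^{m+1}y(\tau)}$. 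Combining this with Remark~\ref{myrem4.4} applied to the index $m+1$ (which gives $\norm{B_d(\tau)^{m+2}}=1-\cO(\tau^{2m+3})$) and the hypothesis, one obtains
\[
  1-\cO(\tau^{2m+3})\le\norm{B_d(\tau)^{m+2}y(\tau)}\le\norm{B_d(\tau)^{m+1}y(\tau)}\le\norm{B_d(\tau)^{m+1}}\le1 .
\]
This immediately yields $\norm{B_d(\tau)^{m+1}y(\tau)}-\norm{B_d(\tau)^{m+1}}=o(\tau^{2m+1})$ — so the induction hypothesis applies and gives $\norm{\sqrt{B_H}B_c^n y(\tau)}=o(\tau^{m-1-n})$ for $0\le n<m$ — and also $1-\norm{B_d(\tau)^{m+1}y(\tau)}^2=\cO(\tau^{2m+3})=o(\tau^{2m+1})$, which is all that is needed below.

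With the decay rates $\norm{\sqrt{B_H}B_c^n y(\tau)}=o(\tau^{m-1-n})$, $0\le n<m$, in hand, $y(\tau)$ satisfies the hypotheses of Lemma~\ref{mylem4.2} (with this $m$), so I would expand $\norm{B_d(\tau)^{m+1}y(\tau)}^2$ accordingly. Setting $z_r\coleq\tau^r\sqrt{B_H}B_c^r y(\tau)$ for $r=0,\dots,m$ turns the weighted triple sum in Lemma~\ref{mylem4.2} into exactly $\sum_{j=0}^m M_j(z_0,\dots,z_m)$ in the notation of Lemma~\ref{mylem4.5} with $k=m$ (this is where the factor $\tau^{\abso{p}+i+\abso{q}+l}$ is absorbed into the $z_r$). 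Lemma~\ref{mylem4.5} then provides, for each fixed $0\le p\le m$, a constant $C^{(m)}_p>0$ with $\sum_{j=0}^m M_j(z)\ge C^{(m)}_p\norm{z_p}^2=C^{(m)}_p\tau^{2p}\norm{\sqrt{B_H}B_c^p y(\tau)}^2$, whence
\[
  2C^{(m)}_p\,\tau^{2p+1}\norm{\sqrt{B_H}B_c^p y(\tau)}^2\le 1-\norm{B_d(\tau)^{m+1}y(\tau)}^2+o(\tau^{2m+1})=o(\tau^{2m+1}),
\]
using the chain estimate for the final equality. Dividing by $2C^{(m)}_p\tau^{2p+1}$ gives $\norm{\sqrt{B_H}B_c^p y(\tau)}^2=o(\tau^{2(m-p)})$, i.e.\ $\norm{\sqrt{B_H}B_c^p y(\tau)}=o(\tau^{m-p})$ for $0\le p\le m$, which is precisely the claim for index $m+1$, completing the induction.

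The remaining parts are routine bookkeeping with the $o(\cdot)$-orders, and verifying the identification of the sum of Lemma~\ref{mylem4.2} with $\sum_j M_j(z)$ from Lemma~\ref{mylem4.5}. Neither the well-definedness of $B_d(\tau)$ for $\tau<2/\norm{B_c}$ nor the semi-contractivity of $B_d(\tau)$ poses any difficulty; the only genuinely new ingredient compared with the continuous-time argument is the chain estimate used to realign the induction hypothesis.
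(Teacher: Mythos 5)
Your proof is correct, and it differs from the paper's mainly in how the induction is organized, while resting on the same ingredients (Lemma~\ref{mylem4.2}, Lemma~\ref{mylem4.5}, Remark~\ref{myrem4.4}, and $\norm{B_d(\tau)}\le1$). The paper keeps $m$ and the function $y$ fixed and runs an \emph{internal} induction over $k=0,\dots,m$, proving successively $\norm{\sqrt{B_H}B_c^n y(\tau)}=o(\tau^{k-1-n})$ for $0\le n<k$; the realignment problem you identify never arises there, because the only comparison needed is $\norm{B_d(\tau)^{m+1}y(\tau)}\le\norm{B_d(\tau)^{k+1}y(\tau)}$, after which Lemma~\ref{mylem4.2} is applied at level $k$ and the hypothesis on $y$ is used together with Remark~\ref{myrem4.4} at level $m$ to obtain the $o(\tau^{2k+1})$ bound. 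You instead induct \emph{externally} on the lemma statement in $m$, mimicking the continuous-time Lemma~\ref{decay_properties}, and bridge the mismatch between $B_d(\tau)^{m+2}$ and $B_d(\tau)^{m+1}$ by the chain $1-\cO(\tau^{2m+3})\le\norm{B_d(\tau)^{m+2}y(\tau)}\le\norm{B_d(\tau)^{m+1}y(\tau)}\le\norm{B_d(\tau)^{m+1}}\le1$, which requires Remark~\ref{myrem4.4} one level higher (i.e., Theorem~\ref{mythm4.3} with $m+1$) plus semi-contractivity; this correctly delivers both the level-$m$ hypothesis for the same $y$ and the bound $1-\norm{B_d(\tau)^{m+1}y(\tau)}^2=\cO(\tau^{2m+3})=o(\tau^{2m+1})$, and the remaining steps (absorbing the powers of $\tau$ into $z_r=\tau^r\sqrt{B_H}B_c^r y(\tau)$ and invoking the constants $C^{(m)}_p$ of Lemma~\ref{mylem4.5}) coincide with the paper's. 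The trade-off is minor: your route buys a closer structural parallel to the continuous-time argument at the cost of the extra chain estimate, whereas the paper's internal induction avoids any comparison between different powers of $B_d(\tau)$ in the hypotheses but must apply Lemma~\ref{mylem4.2} at every intermediate level $k$.
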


\begin{proof}
We will show inductively for $k=0,1,\dotsc,m$ that
\[ \norm{\sqrt{B_H}B_c^n y(\tau)} = o(\tau^{k-1-n}) \quad\text{for}\quad 0 \le n < k. \]
For $k=0$ there is nothing to show. For the step from $k<m$ to $k+1$ we use $\norm{B_d(\tau)} \le 1$, Lemma \ref{mylem4.2} (with $m$ there replaced by $k$ here), and Lemma \ref{mylem4.5} to obtain
\begin{align*}
    &\norm{B_d(\tau)^{m+1}y(\tau)}^2  \le \norm{B_d(\tau)^{k+1}y(\tau)}^2 \\
    & \quad = 1 - 2\tau \sum_{j=0}^k \sum_{\abso{p}+i=0}^k \sum_{\abso{q}+l=0}^k a_p a_q \Big(-\frac12\Big)^{i+l} \left\langle \tau^{\abso{q}+l}\sqrt{B_H}B_c^{\abso{q}+l}y(\tau), \tau^{\abso{p}+i}\sqrt{B_H}B_c^{\abso{p}+i}y(\tau) \right\rangle + o(\tau^{2k+1}) \\
    & \quad \le 1 - 2\tau C^{(k)}_n \norm{\tau^n \sqrt{B_H} B_c^n y(\tau)}^2 + o(\tau^{2k+1}) \quad \forall n = 0 ,\dots, k.
\end{align*}
This implies that
\begin{align*}
    2\tau^{2n+1} C^{(k)}_n \norm{\sqrt{B_H}B_c^ny(\tau)}^2 & \le 1 - \norm{B_d(\tau)^{m+1}y(\tau)}^2 + o (\tau^{2k+1}) \\
    & = 1 - \norm{B_d(\tau)^{m+1}}^2 - \left( \norm{B_d(\tau)^{m+1}y(\tau)}^2 - \norm{B_d(\tau)^{m+1}}^2 \right) + o(\tau^{2k+1}) \\
    & = \mathcal O(\tau^{2m+1}) + o(\tau^{2m+1}) + o(\tau^{2k+1}) = o(\tau^{2k+1})
\end{align*}
by Remark \ref{myrem4.4}. This yields
\[ \norm{\sqrt{B_H} B_c^n y(\tau)} = o(\tau^{(k+1)-1-n}) \quad\text{for}\quad 0 \le n < k+1, \]
as desired.
\end{proof}

The following theorem yields the optimal upper bound for the asymptotic expansion of $\norm{ B_d(\tau)^{m+1}}$, the norm of the scaled Cayley transform, as a function of the step size and in the limit  $\tau\to 0$. Thus it is the discrete analogue of Theorem~\ref{operator_norm_upper_bound}. 

\begin{theorem}
\label{mythm4.7}
Let $B_c$ have hypocoercivity index $\mHC \in \N_0$. Then
\[ \norm{B_d(\tau)^{\mHC+1}}^2 \le 1 - \tau^{2\mHC+1} \frac{2}{\binom{2\mHC}{\mHC}} \lim_{\delta \to 0} \inf_{\norm{\sqrt{B_H}B_c^p y} \le \delta, 0 \le p < \mHC, \norm{y}=1} \norm{\sqrt{B_H} B_c^{\mHC} y}^2 + o(\tau^{\mHC+1}) \]
 for $\tau\to0+$.

Recall from Theorem \ref{mythm4.3} that, for $\mHC=0$ the infimum is just taken over $\SS$.
\end{theorem}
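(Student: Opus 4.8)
The plan is to mirror the proof of Theorem \ref{operator_norm_upper_bound} line by line, replacing its three continuous-time ingredients (Lemmas \ref{lem_propagator_simplification}, \ref{bilinear_form_minimization}, \ref{decay_properties}) by the discrete counterparts developed above (Lemmas \ref{mylem4.2}, \ref{mylem4.5}, \ref{mylem4.6}). Throughout, abbreviate $m := \mHC$. Since $\norm{B_d(\tau)^{m+1}}$ is a supremum, for each $\tau>0$ one can choose $y(\tau)\in\SS$ with $\norm{B_d(\tau)^{m+1}y(\tau)}\ge\norm{B_d(\tau)^{m+1}}-\tau^{2m+2}$, so that $\norm{B_d(\tau)^{m+1}y(\tau)}-\norm{B_d(\tau)^{m+1}}=o(\tau^{2m+1})$ as $\tau\to0+$.

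First I would apply Lemma \ref{mylem4.6} to this $y(\tau)$, which gives $\norm{\sqrt{B_H}B_c^py(\tau)}=o(\tau^{m-1-p})$ for $0\le p<m$ and hence makes $y(\tau)$ admissible for Lemma \ref{mylem4.2} (with the role of $m$ there played by the present $m$). That lemma expands $\norm{B_d(\tau)^{m+1}y(\tau)}^2$ as $1$ minus $2\tau$ times a triple sum of inner products, plus $o(\tau^{2m+1})$. Setting $z_r:=\tau^r\sqrt{B_H}B_c^ry(\tau)$ turns that triple sum into exactly the quadratic expression of Lemma \ref{mylem4.5} with $k=m$, so Lemma \ref{mylem4.5} (applied with $d=k=m$, using $C^{(m)}_m=\binom{2m}{m}^{-1}$) bounds it from below by $\binom{2m}{m}^{-1}\norm{z_m}^2=\binom{2m}{m}^{-1}\tau^{2m}\norm{\sqrt{B_H}B_c^my(\tau)}^2$. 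Since $\norm{y(\tau)}=1$, this yields
\[
\norm{B_d(\tau)^{m+1}y(\tau)}^2 \le 1 - \frac{2}{\binom{2m}{m}}\,\tau^{2m+1}\,\norm{\sqrt{B_H}B_c^my(\tau)}^2 + g(\tau),
\]
where $g(\tau)=o(\tau^{2m+1})$ is a function that does \emph{not} depend on any parameter chosen below.

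Next, fix $\delta>0$. By Lemma \ref{mylem4.6} the quantities $\norm{\sqrt{B_H}B_c^py(\tau)}$ tend to $0$ as $\tau\to0$ for $0\le p<m$, hence there is $T_\delta>0$ such that $y(\tau)$ lies in the set $\{z\in\SS:\norm{\sqrt{B_H}B_c^pz}\le\delta,\ 0\le p<m\}$ for all $0\le\tau\le T_\delta$; on that interval $\norm{\sqrt{B_H}B_c^my(\tau)}^2\ge I_\delta$, with $I_\delta$ the infimum of $\norm{\sqrt{B_H}B_c^mz}^2$ over that set, so that $\norm{B_d(\tau)^{m+1}y(\tau)}^2\le 1 - c_\delta\tau^{2m+1} + g(\tau)$ with $c_\delta:=\tfrac{2}{\binom{2m}{m}}I_\delta$. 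Since both $\norm{B_d(\tau)^{m+1}y(\tau)}$ and $\norm{B_d(\tau)^{m+1}}$ are $\le 1$ (as $B_d(\tau)$ is semi-contractive) and $\ge 1-\mathcal O(\tau^{2m+1})$ by Remark \ref{myrem4.4}, their squares differ by $o(\tau^{2m+1})$. Because $g$ is $\delta$-free, I would first let $\tau\to0$ for each fixed $\delta$, obtaining $\limsup_{\tau\to0}\frac{\norm{B_d(\tau)^{m+1}}^2-1}{\tau^{2m+1}}\le-c_\delta$, and then let $\delta\to0$: the limit $\lim_{\delta\to0}I_\delta$ exists, being monotone in $\delta$ and bounded above by $\norm{\sqrt{B_H}B_c^m}^2$, exactly as argued around \eqref{eq:infimum_convergence}, and this gives the claimed bound.

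The only genuinely delicate point is this iterated limit, just as in Theorem \ref{operator_norm_upper_bound}: $T_\delta$ typically shrinks to $0$ as $\delta\to0$, so one cannot pass to $\delta\to0$ uniformly in $\tau$, and the argument works precisely because the remainder $g(\tau)$ produced by Lemma \ref{mylem4.2} is independent of $\delta$. All the genuinely discrete bookkeeping — the extra Cauchy-product factors $(I+\tfrac\tau2 B_c)^{-1}$ together with the coefficients $a_p$ and $(-\tfrac12)^i$ — has already been absorbed into Lemmas \ref{mylem4.2} and \ref{mylem4.5}, so no further case analysis is needed here, and the identification of the sharp constant as $\binom{2m}{m}^{-1}$ rests entirely on the identity $C^{(m)}_m=\binom{2m}{m}^{-1}$ of Lemma \ref{mylem4.5}.
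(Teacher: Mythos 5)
Your proposal is correct and follows essentially the same route as the paper's own proof: pick a near-maximizing $y(\tau)\in\SS$, invoke Lemma \ref{mylem4.6} to get the decay needed for Lemma \ref{mylem4.2}, bound the resulting triple sum below via Lemma \ref{mylem4.5} with $d=k=\mHC$, and then perform the iterated limit $\tau\to0$ followed by $\delta\to0$, using that the remainder $g$ is $\delta$-independent. The only differences are cosmetic (your explicit construction of $y(\tau)$ and the $\limsup$ formulation), so there is nothing to add.
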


\begin{proof}
    Choose $y(\tau) \in \HH$ for $\tau\ge0$ with $\norm{y(\tau)} = 1$ such that $\norm{B_d(\tau)^{\mHC+1}y(\tau)} - \norm{B_d(\tau)^{\mHC+1}} = o(\tau^{2\mHC+1})$. Lemma \ref{mylem4.6} then implies that 
\begin{equation}\label{mythm4.7-decay}
    \norm{\sqrt{B_H}B_c^n y(\tau)} = o(\tau^{\mHC-1-n})\quad \mbox{ for } 0 \le n < \mHC.
\end{equation}    
Applying Lemma \ref{mylem4.2} and Lemma \ref{mylem4.5} (with $k$ and $d$ there, replaced by $\mHC$ here) we obtain
\begin{align*}
    \norm{B_d(\tau)^{\mHC+1}y(\tau)}^2 \le 1 - 2\tau \binom{2\mHC}{\mHC}^{-1} \tau^{2\mHC} \norm{\sqrt{B_H}B_c^{\mHC} y(\tau)}^2 + g(\tau),
\end{align*}
with $g(\tau) = o(\tau^{2\mHC+1})$.
Now, for any $\delta>0$ (due to \eqref{mythm4.7-decay}) we can find $T_\delta>0$ such that for any $0 \le \tau \le T_\delta$ it holds that:

\[ \norm{B_d(\tau)^{\mHC+1}y(\tau)}^2 \le 1 - c_\delta\, \tau^{2\mHC+1}  + g(\tau), \]
with 
$$
  c_\delta:= 2\binom{2\mHC}{\mHC}^{-1} \inf_{\substack{\norm{\sqrt{B_H}B_c^p y} \le \delta, 0 \le p < \mHC,\\ \norm{y}=1}} \norm{\sqrt{B_H}B_c^{\mHC} y}^2 .
$$ 
Note that $y(\tau)\big|_{[0,T_\delta]}$ is inside the set in which we minimize. Moreover, the function $g$ is independent of $\delta$, $c_\delta$ increases as $\delta\to0$, but $T_\delta$ (typically) decreases to 0 as $\delta\to0$.

Using the fact that $\norm{B_d(\tau)^{\mHC+1}y(\tau)} - \norm{B_d(\tau)^{\mHC+1}} = o(\tau^{2\mHC+1})$, for every fixed $\delta>0$  we obtain that:
$$
  \lim_{\tau\to0} \frac{\|B_d(\tau)^{\mHC+1}\|^2-1}{\tau^{2\mHC+1}} =
  \lim_{\tau\to0} \frac{\|B_d(\tau)^{\mHC+1}y(\tau)\|^2-1}{\tau^{2\mHC+1}} \le -c_\delta.
$$
Hence
$$
  \lim_{\tau\to0} \frac{\|B_d(\tau)^{\mHC+1}\|^2-1}{\tau^{2\mHC+1}} 
\le -\lim_{\delta\to0}c_\delta,
$$
which is a reformulation of the claim.  
\end{proof}

The lower bound from Theorem \ref{mythm4.3} and the upper bound from Theorem \ref{mythm4.7} now imply the asymptotic expansion \eqref{eqqq1} and hence Theorem \ref{th:FinDiff-limit}. 


\section{Maximally coercive/contractive representations}\label{sec:optimal}
In this section we study maximally coercive/contractive representations of continuous-time linear systems of the form \eqref{linodecc} as (semi-)dissipative systems  and discrete-time linear systems of the form \eqref{linodedd} 
as \emph{(semi-)contractive systems}.

\subsection{Continuous-time Lyapunov transformations}\label{sec:ctlyatf}
Even if the original system \eqref{linodecc} is \emph{(stable) asymptotically stable}, i.e., if all eigenvalues have (nonpositive) negative real part, then the system is typically not (semi-dissipative) dissipative. However, in this case one can, always in the matrix case and under some extra assumption in the operator case, perform a \emph{Lyapunov transformation} that makes the system semi-dissipative, see e.g. \cite{Adr95,HinP05}. 

Consider  $B_c\in \mathbb C^{d,d}$, and determine a positive definite solution $X=X^*> 0$ of the \emph{Lyapunov inequality}
\begin{equation}\label{lyaineqcc}
B_c^* X+X B_c\, (\geq) >0.
\end{equation}
If such a solution exists, then  $ V(x):=\frac{1}{2} \Re(x^* Xx) $ is a Lyapunov function, and this implies that the solution $x$ is (stable) asymptotically stable, see e.g. \cite[\S 3.3.4]{HinP05}.
%
%
The solution $X=X^*>0$ then defines a  norm
\begin{equation}\label{ltvnormcc}
\| x \|_{X}:= (x^* X x)^{\frac 12}
\end{equation}
so that in this norm the solution $x(t)$ is (monotonically)
strictly monotonically decreasing in time.
Alternatively, we can multiply~\eqref{linodecc} with the positive definite square root $X^{\frac 12}$ and 
after a  change of basis, $y=X^{\frac 12}x$ \cite{AchAM23ELA,BeaMV19} we obtain the ODE
\begin{equation}\label{dhode} \dot{y}= -\Big(X^{\frac 12}B_c X^{-\frac 12}\Big)y=: -(\widetilde {B_H}-\widetilde{B_S})y, 
\end{equation}
where the (negative) right-hand side has Hermitian part
\[
\widetilde {B_H}:=\frac 12 (X^{\frac 12}B_c X^{-\frac 12} +(X^{\frac 12}B_c X^{-\frac 12})^*)=\frac 12 (X^{-\frac 12}(XB_c+B_c^*X)X^{-\frac 12}) > 0\  (\geq 0), 
\] 
respectively, 
and skew-Hermitian part
\[
-\widetilde {B_S}:=\frac 12 
(X^{\frac 12}B_c X^{-\frac 12}-(X^{\frac 12}B_c X^{-\frac 12})^*).
\]

Note that the transformation to semi-dissipative form is not unique, so one might ask the question what is the relation between different (semi)-dissipative transformations that are obtained in this way. 
It is immediate that if $X,\tilde X>0$ are different positive definite solutions of \eqref{lyaineqcc} then these two solutions are congruent, and since for both transformation matrices in \eqref{dhode} we perform a similarity transform, the spectrum of the transformed system matrices is the same as that of the original system.

As one option to use the freedom in the solution $X$, in \cite{AAC16} Lemma 2, or \cite{AAS} Lemma 2.6 in the real case, it is suggested to solve the Lyapunov-like inequality
\begin{equation}\label{optx}
XB_c+B_c^* X \geq -2\mu X,
\end{equation}
where $\mu<0$ is the spectral abscissa, i.e., the maximal real part of an eigenvalue of $-B_c$. It has been mentioned in \cite{Arnoldtalk} that this choice leads to a smallest eigenvalue of the Hermitian part $B_H$ that is equal to $-\mu$. In more detail one has the following result.
\begin{lemma}\label{lem:optimalX}
Let $-B_c\in \mathbb C^{d,d}$ have all eigenvalues in $\mathbb H$  and let $\mu<0$ be the spectral abscissa, i.e., the largest real part of an eigenvalue of $-B_c$. 
\begin{enumerate}[label=(\roman*)]
\item If every eigenvalue with real part $\mu$ is semi-simple,  then there exists $X>0$  such that 
\[
XB_c+B_c^* X\geq -2\mu X,
\]
and the inequality is not strict, i.e., there exists a nonzero vector $z\in \mathbb C^d$ such that $z^*(XB_c+B_c^* X+2\mu X)z=0$.

\item If at least one eigenvalue $\lambda$ of $-B_c$ with real part $\mu$ is defective, i.e., there exists a Jordan block of size larger than one associated with $\lambda$,  then for every $\epsilon >0$ there exists $X>0$ such that
\[
XB_c+B_c^* X\geq -2(\mu+\epsilon) X.
\]
\end{enumerate}
\end{lemma}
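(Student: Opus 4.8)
The plan is to work with the Jordan form of $-B_c$ and construct $X$ explicitly. Write $-B_c = SJS^{-1}$ with $J$ in Jordan normal form. Since $-B_c$ and $-B_c - \mu I$ differ only by a scalar shift, and the inequality $XB_c + B_c^*X \ge -2\mu X$ is equivalent to $X(-B_c + \mu I) + (-B_c + \mu I)^* X \le 0$, we may reduce to studying $A := -B_c + \mu I$, whose eigenvalues all have nonpositive real part, with those of real part exactly $0$ being precisely the images of the eigenvalues of $-B_c$ with real part $\mu$. The task becomes: find $X > 0$ with $XA + A^*X \le 0$, and in case (i) show the inequality is not strict, while in case (ii) show that after a further small shift $A_\epsilon := A - \epsilon I$ we can still get $XA_\epsilon + A_\epsilon^*X \le 0$ for suitable $X > 0$.

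First I would handle case (i). If every eigenvalue of $A$ on the imaginary axis is semisimple, I would split $\C^d = \mathcal{V}_0 \oplus \mathcal{V}_-$ according to the spectral subspaces for the purely-imaginary eigenvalues and for the eigenvalues with strictly negative real part (using the block-diagonalization from the Jordan form). On $\mathcal{V}_0$ the restriction of $A$ is diagonalizable with purely imaginary eigenvalues, so there is an inner product (hence a positive definite $X_0$) making it skew-Hermitian: $X_0 A|_{\mathcal{V}_0} + (A|_{\mathcal{V}_0})^* X_0 = 0$. On $\mathcal{V}_-$ the restriction is Hurwitz, so the classical Lyapunov theorem gives $X_- > 0$ with $X_- A|_{\mathcal{V}_-} + (A|_{\mathcal{V}_-})^* X_- < 0$ (strictly negative definite). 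Taking $X$ block-diagonal in this decomposition (in the basis adapted to $\mathcal{V}_0 \oplus \mathcal{V}_-$, transported back by $S$) gives $XA + A^*X \le 0$ with kernel exactly $\mathcal{V}_0 \ne \{0\}$; any nonzero $z \in \mathcal{V}_0$ witnesses non-strictness. A technical point: the block-diagonal decomposition of $A$ need not be orthogonal, so $X$ must be chosen as $S^{-*}\,\mathrm{diag}(X_0, X_-)\,S^{-1}$ with $S$ the transformation bringing $A$ to block-diagonal form; then $XA + A^*X = S^{-*}(\mathrm{diag}(X_0,X_-)\,\tilde{A} + \tilde{A}^*\,\mathrm{diag}(X_0,X_-))S^{-1}$ where $\tilde A$ is block diagonal, and the computation decouples.

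For case (ii), the point is that once we replace $\mu$ by $\mu + \epsilon$, \emph{no} eigenvalue of $A_\epsilon = -B_c + (\mu+\epsilon)I$ lies on the imaginary axis — the defective eigenvalues that were on the axis are now strictly in the left half-plane, and no new eigenvalue crosses over for $\epsilon$ small enough (choose $\epsilon$ smaller than the gap to the next real part value). Hence $A_\epsilon$ is Hurwitz and the classical Lyapunov theorem directly produces $X > 0$ (in fact with $X A_\epsilon + A_\epsilon^* X < 0$), which translates back to $XB_c + B_c^*X \ge -2(\mu+\epsilon)X$. So case (ii) is actually the easy case once stated correctly.

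I expect the main obstacle to be bookkeeping around the non-orthogonality of the Jordan/spectral decomposition in case (i): one must be careful that "block-diagonal $X$" is taken in the transformed coordinates and conjugated back, so that the semidefiniteness statement and the identification of the kernel with $\mathcal{V}_0$ survive. A secondary subtlety is checking that on the imaginary-axis block, semisimplicity really does give a skew-Hermitian representative — this is just the statement that a diagonalizable matrix with purely imaginary spectrum is skew-Hermitian in a suitable inner product, but it should be spelled out. Everything else (Lyapunov's theorem on the Hurwitz block, the scalar shift equivalences, producing the witness vector $z$) is routine.
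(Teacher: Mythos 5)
Your proposal is correct, but it takes a genuinely different route from the paper. The paper works directly with the Jordan form of $B_c$ and deflates the eigenvalues with real part $\mu$ one at a time: it block-diagonalizes off a single such eigenvalue via $T$, builds $X$ as the rank-one piece $t_1t_1^*$ (with $t_1=T^{-*}e_1$) plus a recursively constructed $\tilde X$ supported on the complementary invariant subspace, and reads off the non-strictness from the resulting zero diagonal block; in case (ii) it keeps the nonzero Jordan coupling $b_2$ and obtains the $\epsilon$-relaxed inequality by scaling $b_2$ small in the Jordan basis. You instead shift by the spectral abscissa and split $\C^d$ once and for all into the imaginary-axis spectral subspace and its Hurwitz complement, using that a semisimple matrix with purely imaginary spectrum is skew-Hermitian in a suitable inner product together with the classical Lyapunov theorem on the complement; your case (ii) is simply the classical Lyapunov theorem applied to the Hurwitz matrix $-B_c-(\mu+\epsilon)I$, which is cleaner than the paper's construction and, as you note, does not even need the defectiveness hypothesis (that hypothesis only explains why $\epsilon=0$ is unattainable). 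Both arguments are valid; the paper's buys a more explicit, recursively built $X$, yours buys brevity and makes transparent that every vector in the critical spectral subspace witnesses equality. Two small repairs to your write-up: the shifted matrix should be $A=-B_c-\mu I$ (with $\mu<0$, your $-B_c+\mu I$ moves the spectrum further left and contradicts your own description of eigenvalues landing on the imaginary axis), and in (ii) the restriction to small $\epsilon$ is unnecessary — $-B_c-(\mu+\epsilon)I$ is Hurwitz for every $\epsilon>0$, which matters since the lemma claims the statement for all $\epsilon>0$ (though monotonicity of the inequality in $\epsilon$ would rescue a small-$\epsilon$ argument anyway).
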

\begin{proof}
Let $\lambda =\mu+i \gamma$ be an eigenvalue of $-B_c$. \\
(i) Let $T\in \mathbb C^{d,d}$  be invertible such that 
\[
-S=-T^{-1} B_c T =
\begin{bmatrix}
     \lambda & 0 \\ 0 & B_2
\end{bmatrix},
\]
which can e.g.\ be obtained from the Jordan form of $B_c$. Let $t_1= T^{-*}e_1$  and $X=t_1t_1^*+\tilde X$ with $\tilde X=\tilde X^*$ and $t_1^* \tilde  X=0$ so that $T^{*} \tilde X T=\begin{bmatrix}
    0 & 0 \\ 0 & X_2
\end{bmatrix}$. 
Then 
\begin{eqnarray*}
 B_c^* X+ XB_c &=& T^{-*}S^*T^{*}X+X TS T^{-1}\\   
&=& T^{-*}(S^* T^{*} X T+T^{*} X T S) T^{-1} \\
&=&  T^{-*}\begin{bmatrix}
    -2\mu & 0 \\ 0 & -B_2^* X_2-X_2 B_2
\end{bmatrix}
T^{-1}.
\end{eqnarray*}\\
Thus, 
\[
B_c^* X+XB_c +2\mu X =T^{-*} \begin{bmatrix}
    0 & 0 \\ 0 & -B_2^* X_2-X_2 B_2+2\mu X_2
\end{bmatrix}
T^{-1}.
\]
Proceeding inductively until all eigenvalues with real part $\mu$ have been treated,
since $\mu$ is the largest real part of an eigenvalue of $-B_c$, and thus all eigenvalues of $B_2$ have real parts at most $\mu$,  it follows that there exists a solution $X_2>0$ of $-B_2^* X_2-X_2 B_2+2\mu X_2 \geq 0$ and hence $X$ has the desired properties.\\
(ii) If an  eigenvalue $\lambda$ associated with the largest real part is defective,  then
\[
-S=-T^{-1} B_c T=
\begin{bmatrix}
     \lambda & b_2 \\ 0 & B_2
\end{bmatrix},
\]
with $b_2\neq 0$, where the norm of $b_2$ can be made arbitrarily small by a scaling of $b_2$ in the Jordan canonical form. Proceeding as in part 1, it follows that 
\[
B_c^* X+XB_c+2\mu X=T^{-*} \begin{bmatrix}
    0 & -b_2 \\ -b_2^* & -B_2^* X_2-X_2 B_2+2\mu X_2
\end{bmatrix}
T^{-1}
\]
is indefinite, while $B_c^* X+XB_c +2(\mu+\epsilon) X$ can be made definite by the scaling of $b_2$. 
\end{proof}

Since  $B_c$ and $X^{\frac 12}B_c X^{-\frac 12}$ have the same spectral abscissa, Lemma~\ref{lem:optimalX} shows that the  similarity transformation with $X^{\frac 12}$ from \eqref{optx} maps the 
system \eqref{linodecc} to a system of form \eqref{dhode} with \emph{maximal coercivity constant}, i.e., $B_H$ has the maximal possible smallest eigenvalue. Furthermore, the field of values of the transformed system lies fully on the left of the line $\mu+i \mathbb R$. Such a solution  was already used 
for Fokker-Planck equations yielding sharp decay rates, see \cite[Lemma 2.11(i)]{AAS}. 

\begin{remark}\label{rem:infdim}{\rm In the infinite dimensional case a similar result follows under weak conditions, see \cite{CurtainZwart}, Section 4, Theorem 4.1.3, as well as Exercise 4.17 and 4.18.
}
\end{remark}

\begin{remark}\label{rem_nonstrict} {\rm A result similar to Lemma~\ref{lem:optimalX} also holds if for the spectral abscissa one has $\mu=0$, i.e., if eigenvalues with real part zero occur. 
The proof is analogous.}
\end{remark}

\subsection{Discrete-time Lyapunov transformations}\label{sec:dtlyap}
For a linear time-invariant discrete-time system of the form \eqref{linodedd}
with $B_d\in \mathbb C^{d,d}$ we have an analogous result to Lemma~\ref{lem:optimalX}.
\begin{lemma}\label{lem:optimalXd}
Let $B_d\in \mathbb C^{d,d}$ have all eigenvalues in $\mathbb D$ and let $\rho<1$ be its spectral radius, i.e., the largest modulus of an eigenvalue of $B_d$. 
\begin{enumerate}
    \item If every eigenvalue with modulus $\rho$ is semi-simple,  then there exists $X=X^*>0$  such that 
\[
\rho^2 X-B_d^*XB_d\geq 0,
\]
and the inequality is not strict, i.e., there exists a nonzero vector $z\in \mathbb C^d$ such that $z^*(\rho^2 X-B_d^*XB_d)z=0$.

\item If at least one eigenvalue $\lambda$ with modulus $\rho$ is defective, i.e., there exist a Jordan block of size larger than one associated with $\lambda$,  then for every $\epsilon >0$ there exists $X=X^*>0$ such that
\[
(\rho^2+\epsilon) X-B_d^*XB_d\geq 0.
\]
\end{enumerate}
\end{lemma}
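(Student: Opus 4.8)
The plan is to mirror the proof of Lemma~\ref{lem:optimalX}, replacing the Lyapunov operator $X\mapsto B_c^*X+XB_c$ by the Stein operator $X\mapsto\rho^2X-B_d^*XB_d$, and using a block reduction of $B_d$ adapted to its eigenvalues of maximal modulus. Throughout I would use the classical fact that if $A\in\mathbb{C}^{k,k}$ has $\rho(A)<1$, then $\sum_{j\ge0}(A^*)^jQA^j$ converges for every Hermitian $Q\ge0$ to the unique Hermitian solution $Y$ of the Stein equation $Y-A^*YA=Q$, with $Y>0$ whenever $Q>0$; in particular $Y\ge I>0$ when $Q=I$.

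For part (i): I would bring $B_d$ to Jordan canonical form $T^{-1}B_dT=\diag(D,B_2)$ with $T\in\mathbb{C}^{d,d}$ invertible, ordering the blocks so that those of the modulus-$\rho$ eigenvalues come first; by semisimplicity these are $1\times1$, hence $D=\diag(\lambda_1,\dots,\lambda_r)$ with $|\lambda_i|=\rho$, while $B_2$ collects the blocks of eigenvalues of modulus $<\rho$, so $\rho(B_2)<\rho$. I would then set $X_2:=\sum_{j\ge0}\rho^{-2j}(B_2^*)^jB_2^j>0$, which solves $\rho^2X_2-B_2^*X_2B_2=\rho^2I$, and $X:=T^{-*}\diag(I_r,X_2)T^{-1}>0$, so that $T^*XT=\diag(I_r,X_2)$. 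Using $D^*D=\rho^2I_r$, a direct computation then gives
\[
\rho^2X-B_d^*XB_d=T^{-*}\bigl(\rho^2\diag(I_r,X_2)-\diag(D^*D,B_2^*X_2B_2)\bigr)T^{-1}=T^{-*}\diag(0_r,\rho^2I_{d-r})T^{-1}\ge0 .
\]
Finally, testing with $z:=Te_1$ (the first column of $T$, an eigenvector of $B_d$ for $\lambda_1$) yields $z^*(\rho^2X-B_d^*XB_d)z=e_1^*\diag(0_r,\rho^2I_{d-r})e_1=0$, so the inequality is not strict.

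For part (ii): fix $\epsilon>0$ and put $\tilde\rho:=\sqrt{\rho^2+\epsilon}>\rho$. The quickest route is to note that $\rho(\tilde\rho^{-1}B_d)=\rho/\tilde\rho<1$, so $X:=\sum_{j\ge0}\tilde\rho^{-2j}(B_d^*)^jB_d^j>0$ solves $(\rho^2+\epsilon)X-B_d^*XB_d=(\rho^2+\epsilon)I\ge0$. To stay parallel to Lemma~\ref{lem:optimalX}(ii), I would instead peel off the defective eigenvalue: write $T^{-1}B_dT=\begin{bmatrix}\lambda&b_2\\0&B_2\end{bmatrix}$ with $|\lambda|=\rho$, $\rho(B_2)\le\rho$, where a diagonal rescaling of the Jordan basis makes $\|b_2\|$ as small as desired (and $b_2\neq0$ because $\lambda$ is defective). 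Taking $X_2$ the Stein solution of $\tilde\rho^2X_2-B_2^*X_2B_2=\tilde\rho^2I$ and $X:=T^{-*}\diag(1,X_2)T^{-1}>0$, the reduction turns $(\rho^2+\epsilon)X-B_d^*XB_d$ into $T^{-*}MT^{-1}$ with
\[
M=\begin{bmatrix}\epsilon&-\bar\lambda b_2\\ -\lambda b_2^*&(\rho^2+\epsilon)I-b_2^*b_2\end{bmatrix},
\]
and a Schur-complement computation with pivot $\epsilon>0$ (using $|\lambda|^2=\rho^2$ and $b_2^*b_2\le\|b_2\|^2I$) shows $M\ge0$ as soon as $\|b_2\|^2\le\epsilon$.

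The Jordan reductions, the convergence of the Stein series and the block algebra are all routine. The one delicate point is in part (ii): one must check that shrinking the coupling block $b_2$ does not spoil the positivity of $X_2$. This is harmless, because rescaling the Jordan basis leaves $B_2$ conjugate to itself, hence its spectrum — and therefore the existence and positive definiteness of the Stein solution — unchanged, while the threshold $\|b_2\|^2\le\epsilon$ depends only on $\rho$ and $\epsilon$. I would also remark that the same block reduction shows why the sharp value $\rho^2$ of (i) cannot be reached in the defective case: $\rho^2X-B_d^*XB_d\ge0$ would force $X^{1/2}B_dX^{-1/2}$ to be a matrix of spectral norm $\rho$ possessing a defective eigenvalue of modulus $\rho$, which is impossible (an eigenvector attaining the norm must also be an eigenvector of the adjoint, and the orthogonal complement is invariant).
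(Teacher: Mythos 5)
Your proof is correct, and it shares the same structural skeleton as the paper's argument for Lemma~\ref{lem:optimalXd} — a similarity transformation to block-triangular (Jordan) form and a block-diagonal $X$ in the transformed basis — but it executes both parts differently. In (i) the paper peels off one peripheral eigenvalue at a time and argues by induction, asserting at the end that some $X_2>0$ with $\rho^2X_2-B_2^*X_2B_2\geq 0$ exists for the remaining block; you instead collect all modulus-$\rho$ eigenvalues into one diagonal block and produce the complementary solution explicitly as the convergent Stein series $X_2=\sum_{j\geq 0}\rho^{-2j}(B_2^*)^jB_2^j$, which gives $\rho^2X_2-B_2^*X_2B_2=\rho^2 I$ and makes the equality vector $z=Te_1$ (hence the non-strictness claim) immediate. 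In (ii) your ``quick route'' $X=\sum_{j\geq 0}(\rho^2+\epsilon)^{-j}(B_d^*)^jB_d^j$ is genuinely different from the paper's proof: it needs no Jordan reduction at all and shows that defectivity plays no role in the existence statement, while your alternative peeled construction with the Schur-complement threshold $\|b_2\|^2\leq\epsilon$ makes quantitative the paper's terse ``can be made definite by the scaling of $b_2$'' (and you correctly note that rescaling the Jordan basis changes $X_2$ but not the threshold). What each approach buys: the paper's induction keeps the discrete case verbatim parallel to the continuous-time Lemma~\ref{lem:optimalX}, whereas your explicit Stein-series certificates are shorter to verify, treat all peripheral eigenvalues in one shot, and in (ii) yield the slightly sharper insight that the $\epsilon$-relaxed inequality holds for any $B_d$ of spectral radius $\rho$. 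Your closing observation that $\rho^2X-B_d^*XB_d\geq 0$ is impossible when a modulus-$\rho$ eigenvalue is defective is a nice complement to the paper's remark that its particular $X$ yields an indefinite expression, though it is not required by the statement.
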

\begin{proof}
Let $\lambda $ with $|\lambda|=\rho$ be an eigenvalue of $B_d$.

(i) Let $T\in \mathbb C^{d,d}$  be invertible such that 
\[
S=T^{-1} B_d T=
\begin{bmatrix}
     \lambda & 0 \\ 0 & B_2
\end{bmatrix},
\]
which can for instance be obtained from the Jordan form of $B_d$. Let $t_1= T^{-*}e_1$ and $X=t_1t_1^*+\tilde X$ with $\tilde X=\tilde X^*$ and $t_1^* \tilde  X=0$, so that $T^{*} \tilde X T=\begin{bmatrix}
    0 & 0 \\ 0 & X_2
\end{bmatrix}$.
Then 
\begin{eqnarray*}
\rho^2X-B_d^*XB_d &=&\rho^2 X-T^{-*}S^*T^{*}X TS T^{-1}\\   
&=& T^{-*}(\rho^2 T^*XT -S^*T^{*} X T S) T^{-1} \\
&=&  T^{-*}\begin{bmatrix}
    \rho^2-|\lambda|^2 & 0 \\ 0 & \rho^2 X_2-B_2^* X_2 B_2
\end{bmatrix}
T^{-1}\\
&=&T^{-*} \begin{bmatrix}
    0 & 0 \\ 0 &\rho^2X_2-B_2^* X_2B_2
\end{bmatrix}
T^{-1} \geq 0.
\end{eqnarray*}
Proceeding inductively until all eigenvalues with modulus $\rho$ have been treated,
since $\rho$ is the largest modulus of an eigenvalue of $B_d$, and thus all eigenvalues of $B_2$ have modulus at most $\rho$  it follows that there exists a solution $X_2>0$ of  $\rho^2X_2- B_2^* X_2 B_2 \geq 0$ and hence $X$ has the desired properties.

(ii) If an  eigenvalue $\lambda$ with modulus $\rho$ 
is defective,  then
\[
S=T^{-1} B_d T=
\begin{bmatrix}
     \lambda & b_2 \\ 0 & B_2
\end{bmatrix},
\]
with $b_2\neq 0$, where the norm of $b_2$ can be made arbitrarily small by a scaling of $b_2$ in the Jordan canonical form. Proceeding as in part 1, it follows that 

\[
\rho^2X-B_d^*XB_d=T^{-*} \begin{bmatrix}
  0 & -\bar\lambda b_2 \\- \lambda b_2^*  & \rho^2X_2-B_2^* X_2B_2-|b_2|^2  
\end{bmatrix}
T^{-1}
\]
is indefinite, while $(\rho^2+\epsilon)X-B_d^*XB_d$ can be made definite by the scaling of $b_2$. 
\end{proof}
Note that the construction in Lemma~\ref{lem:optimalXd} then leads to a representation that is maximally contractive.

\begin{remark}\label{rem_nonstrict-discrete} A result similar to Lemma~\ref{lem:optimalXd} also holds if for the spectral radius one has $\rho=1$, i.e., if eigenvalues on the unit circle occur. 
The proof is analogous.
\end{remark}

Discretization of \eqref{linodecc} with the implicit midpoint rule yields a discrete-time
system with $B_d=(I+\frac{\tau}{2}B_c)^{-1}(I-\frac{\tau}{2}B_c)$ and the discrete-time Lyapunov inequality holds for $B_d$ if and only if the continuous-time inequality holds for $-B_c$.
Therefore  we can choose the formulations  in both cases accordingly.

\subsection{The effect of a Lyapunov transformation on the numerical solution}

In this subsection we study the effect of the Lyapunov transformation to semi-dissipative form in \eqref{dhode} on the numerical solution of \eqref{linodecc}. 

Consider the well-known error estimates, see e.g. \cite{HaiNW93}, for the solution of initial value problems for ordinary differential equations $\dot x=f(x)$, $x(t_0)= x^0$ with a one-step method for the approximations $u_i\approx x(t_i)$ on a grid $t_0<t_1<\cdots<t_N$ (with $\tau_i=t_{i+1}-t_{i}$) given by
\begin{equation}\label{ostepmethod}
    u_{i+1}=u_i+\tau_i \Phi(t_i, u_i, \tau_i),\ u_0\approx x^0,
\end{equation}
with the increment function $\Phi$ satisfying a Lipschitz condition 
\begin{equation} \label{lipcond}\| \Phi(t,x_1,\tau)-\Phi(t,x_2,\tau) \| \leq L_x  \|x_1-x_2 \|,
 \end{equation}
 for a norm $\|\cdot \|$.
 Then 
\begin{equation}\label{errest}
 \|x(t_i)-u_i \|\leq \big(\|x(t_0)-u_0\|+(t_i-t_0)\theta_{\max}\big) e^{L_x(t_i-t_0)},
 \end{equation}
 where 
 \[
\theta_{\max} =\max_{t\in[t_0,t_N]} |\theta(t)|
\]
is the maximal local discretization error of the method. 

Let us consider the original $L_2$ norm 
$\|x(t)\|$ and  the same method applied to the transformed system  with $y(t)=X^{\frac 12}x(t)$ with approximations $v_i=X^{\frac 12}u_i~\approx y(t_i)$ and Lipschitz constant $L_y$.
Then 
\begin{equation}
\|x(t_i) -u_i\| \leq \| X^{-\frac 12}\|\,  \|y(t_i)-v_i \|\leq \| X^{-\frac 12}\|\, \big(\|y(t_0)-v_0\|+(t_i-t_0) \theta_{\max}\big) e^{L_y(t_i-t_0)}.\label{erresty}
 \end{equation}
 If we consider the implicit midpoint rule (with constant stepsize $\tau$) applied to \eqref{linodecc}, then 
\[ \Phi(x,\tau)= -(I+\frac {\tau}{2} B_c)^{-1}B_cx
\]
so that 
\[L_y =\|-(I+\frac{\tau}{2}X^{\frac 12}B_cX^{-\frac 12})^{-1} X^{\frac 12}B_cX^{-\frac 12} \|
\leq \| X^{\frac 12}\|\, \|X^{-\frac 12}\|L_x 
\]
and hence 
\[ e^{L_y(t_i-t_0)}\leq e^{L_x(t_i-t_0)}\|X^{\frac 12}\|\, \|X^{-\frac 12}\|.
\]
Inserting this into \eqref{errest} we obtain the estimate 
\[
\|x(t_i) -u_i\| \leq \| X^{\frac 12}\|\, \|X^{-\frac 12}\|\,
\big(\|x(t_0)-u_0\|+(t_i-t_0) \theta_{\max}\big) e^{L_x(t_i-t_0)}.\]

This shows that the transformation to semi-dissipative form leads to a multiplication of the error estimate by the condition number of $X^{\frac 12}$.

Note that the same estimate holds for all Runge-Kutta methods when applied to linear systems, since usually the increment function of the transformed system is similar to that of the original system and therefore the Lipschitz constant is just scaled with the condition number of the transformation matrix.


\section*{Conclusion}
New proofs for the characterizations of the short-time decay of the solutions
of linear continuous-time and discrete-time evolution equations from the initial value are presented for both systems. It is shown that essentially the same proof
technique can be employed in both systems. The constants in the leading terms of the 
propagator norm expansion are determined. When the
discrete-time system arises from the implicit midpoint discretization (scaled Cayley-transform) of
a continuous-time system, it is shown that the norm of the continuous-time solution operator is
approximated to a higher order than expected from the order of the discretization.

Since the representation of a linear system as a semi-dissipative or semi-contractive
system is not unique, but can be modified by a change of basis, we discuss the construction of
maximally coercive/contractive representations of hypocoercive and hypocontractive systems and the effect of different
representations on the error estimates for the numerical solution.

\bigskip
\textbf{Acknowledgments. } This research was funded in part by the Austrian Science Fund (FWF) project 10.55776/F65. For open-access purposes, the authors have applied a CC BY public copyright
license to any author-accepted manuscript version arising from this
submission. One of the authors (SE) was also supported by the Vienna School of Mathematics (VSM).

\bibliographystyle{abbrv}
\bibliography{references}

\begin{thebibliography}{10}

\bibitem{AAC16}
F.~Achleitner, A.~Arnold, and E.~A. Carlen.
\newblock On linear hypocoercive {BGK} models.
\newblock In P.~Gon\c{c}alves and A.~J. Soares, editors, {\em From particle
  systems to partial differential equations {III}}, volume 162 of {\em Springer
  Proc. Math. Stat.}, pages 1--37. Springer, Cham, 2016.

\bibitem{AAC2}
F.~Achleitner, A.~Arnold, and E.~A. Carlen.
\newblock The hypocoercivity index for the short time behavior of linear
  time-invariant {ODE} systems.
\newblock {\em Journal of Differential Equations}, 371:83--115, 2023.

\bibitem{AchAM23ELA}
F.~Achleitner, A.~Arnold, and V.~Mehrmann.
\newblock Hypocoercivity and hypocontractivity concepts for linear dynamical
  systems.
\newblock {\em Electron. J. Linear Algebra}, 39:33--61, 2023.

\bibitem{AchAM25}
F.~Achleitner, A.~Arnold, and V.~Mehrmann.
\newblock Hypocoercivity in {A}lgebraically {C}onstrained {P}artial
  {D}ifferential {E}quations with {A}pplication to {O}seen {E}quations.
\newblock {\em J. Dynam. Differential Equations}, 37(2):1747--1786, 2025.

\bibitem{AchAMN25}
F.~Achleitner, A.~Arnold, V.~Mehrmann, and E.~A. Nigsch.
\newblock Hypocoercivity in {H}ilbert spaces.
\newblock {\em J. Funct. Anal.}, 288(2):Paper No. 110691, 51, 2025.

\bibitem{AchAMN25b}
F.~Achleitner, A.~Arnold, V.~Mehrmann, and E.~A. Nigsch.
\newblock Long- and short-time behavior of hypocoercive evolution equations via
  modal decompositions.
\newblock {\em Proceedings of IWOTA 2024}, 2025.

\bibitem{AAS}
F.~Achleitner, A.~Arnold, and D.~St{\"u}rzer.
\newblock Large-time behavior in non-symmetric {Fokker}-{Planck} equations.
\newblock {\em Riv. Mat. Univ. Parma (N.S.)}, 6(1):1--68, 2015.

\bibitem{Adr95}
L.~Y. Adrianova.
\newblock {\em Introduction to Linear Systems of Differential Equations}.
\newblock Trans. Math. Monographs, Vol. 146, AMS, Providence, RI, 1995.

\bibitem{Arnoldtalk}
A.~Arnold.
\newblock Entropy method for hypocoercive \& non-symmetric {F}okker-{P}lanck
  equations with linear drift.
\newblock https://indico.math.cnrs.fr/event/853/, 2017.
\newblock Talk at workshop ``Numerical aspects of nonequilibrium dynamics'',
  IHP, Paris.

\bibitem{BeaMV19}
C.~Beattie, V.~Mehrmann, and P.~{Van Dooren}.
\newblock Robust port-{Hamiltonian} representations of passive systems.
\newblock {\em Automatica}, 100:182--186, 2019.

\bibitem{Bernstein1993SomeEF}
D.~S. Bernstein and W.~So.
\newblock Some explicit formulas for the matrix exponential.
\newblock {\em IEEE Trans. Autom. Control.}, 38:1228--1232, 1993.

\bibitem{hilbert_matrix}
M.~D. Choi.
\newblock Tricks or treats with the {H}ilbert matrix.
\newblock {\em Amer. Math. Monthly}, 90(5):301--312, 1983.

\bibitem{CurtainZwart}
R.~Curtain and H.~Zwart.
\newblock {\em Introduction to infinite-dimensional systems theory. {A}
  state-space approach}, volume~71 of {\em Texts Appl. Math.}
\newblock Springer, New York, 2020.

\bibitem{dunford_schwartz}
N.~Dunford and J.~T. Schwartz.
\newblock {\em Linear operators. {P}art {I}}.
\newblock Wiley Classics Library. John Wiley \& Sons, Inc., New York, 1988.
\newblock General theory, With the assistance of William G. Bade and Robert G.
  Bartle, Reprint of the 1958 original, A Wiley-Interscience Publication.

\bibitem{eisner_operator_semigroups}
T.~Eisner.
\newblock {\em Stability of operators and operator semigroups}, volume 209 of
  {\em Operator Theory: Advances and Applications}.
\newblock Birkh\"auser Verlag, Basel, 2010.

\bibitem{engel_nagel}
K.-J. Engel and R.~Nagel.
\newblock {\em One-parameter Semigroups for Linear Evolution Equations}, volume
  194 of {\em Graduate Texts in Mathematics}.
\newblock Springer-Verlag, New York, 2000.

\bibitem{Fis08}
A.~Fischer.
\newblock Differentiability of {P}eano derivatives.
\newblock {\em Proc. {AMS}}, 136(5):1779--1785, 2008.

\bibitem{For88}
B.~Fornberg.
\newblock Generation of finite difference formulas on arbitrarily spaced grids.
\newblock {\em Math. {C}omp.}, 51(184):699--706, 1988.

\bibitem{Graf}
G.~M. Graf.
\newblock Private communication, 2024.

\bibitem{GreK06}
R.~E. Greene and S.~G. Krantz.
\newblock {\em Function Theory of One Complex Variable}.
\newblock American Mathematical Soc., 2006.

\bibitem{HaiNW93}
E.~Hairer, S.~P. N{\o}rsett, and G.~Wanner.
\newblock {\em Solving ordinary differential equations {I}}, volume~8 of {\em
  Springer Series in Computational Mathematics}.
\newblock Springer-Verlag, Berlin, second edition, 1993.

\bibitem{HinP05}
D.~Hinrichsen and A.~J. Pritchard.
\newblock {\em Mathematical Systems Theory {I}. Modelling, State Space
  Analysis, Stability and Robustness}.
\newblock Springer-Verlag, Berlin, Heidelberg, 2005.

\bibitem{KOHAUPT20011}
L.~Kohaupt.
\newblock Differential calculus for some p-norms of the fundamental matrix with
  applications.
\newblock {\em J. Comp. Appl. Math.}, 135(1):1--21, 2001.

\bibitem{PisZ07}
S.~Piskarev and H.~Zwart.
\newblock Crank-{N}icolson scheme for abstract linear systems.
\newblock {\em Numer. Funct. Anal. Optim.}, 28(5-6):717--736, 2007.

\bibitem{Vil09}
C.~Villani.
\newblock Hypocoercivity.
\newblock {\em Mem. Amer. Math. Soc.}, 202(950), 2009.

\end{thebibliography}
\end{document}